\def\rien{\rule{0pt}{0pt}}
\def\Nset{\mathrm{I}\!\mathrm{N}}
\def\Zset{\mathbb{Z}}
\def\Rset{\mathrm{I}\!\mathrm{R}}
\def\RsetAlt{\mathcal{R}}
\def\thegroupAlone{G}
\def\projFromHahn#1#2{\left(#1\right)}
\def\theHahnSeriesField#1#2{#1\left(\left(#2\right)\right)}
\def\thegroup#1#2{\thegroupAlone_{#1,#2}}
\def\theHahnSupport#1{\sigma\left(#1\right)}
\def\directlim{{\displaystyle\lim_{\longrightarrow}\;}}
\def\equivrel{\approx}
\def\everFalse{\emptyset}
\def\everTrue{\Omega}
\def\themapping#1{\mu_{#1}}
\def\thejointmapping#1{(\mu\!\times\!\mu)_{#1}}
\def\theCondMapping#1{\rho_{#1}}
\def\theid#1{{{\mathrm{id}}_{{#1}}}}
\def\thetransp#1{{T_{{#1}}}}
\def\preeverFalse{{\overline{\everFalse}}}
\def\thepartof#1{{\sigma\!#1}}
\def\thecropof#1#2{{#1[#2]}}
\def\thesetofProb#1#2{\mathbb{P}_{#1}\left(#2\right)}
\def\thesetofBayesProb#1#2{\mathbf{I\!P}_{#1}\left(#2\right)}
\def\thecardof#1{{\mathrm{card}(#1)}}
\def\thepropBool{\mbox{$\mathbf{Bool}$}}
\def\thepropDef{\mbox{$\mathbf{Def}$}}
\def\thepropInf{\mbox{$\mathbf{Inf}$}}
\def\thepropInd{\mbox{$\mathbf{Ind}$}}
\def\thesetofFiniteSubboolean#1{{\mathbb{B}\left(#1\right)}}
\def\thebayesextent#1{\overline{#1}}
\newtheorem{theorem}{Theorem}
\newtheorem{property}{Property}
\newtheorem{lemma}[property]{Lemma}
\newtheorem{corollary}[property]{Corollary}
\newtheorem{definition}[property]{Definition}
\newtheorem{example}[property]{Example}
\newtheorem{notation}[property]{Notation}
\def\theAxInfIIcond{\mbox{$\mathbf{Cd\ }$}}
\def\theAxK{\mbox{$\mathbf{\ K\ }$}}
\def\theAxCondIIinf{\mbox{$\mathbf{Cd^{-1}}$}}
\def\theAxNeg{\mbox{$\mathbf{Neg}$}}
\def\theAxInd{\mbox{$\mathbf{Ind}$}}
\def\metaP{\mbox{$\mathbf{\mu P}$}} 
\def\metaC{\mbox{$\mathbf{\mu C}$}} 
\def\metaW{\mbox{$\mathbf{\mu W}$}} 
\def\thesetofProp{\mathcal{L}_{C}}
\def\thesetofCondProp{\mathcal{L}_{[\,]}}
\def\thesetofDBLProp{\mathcal{L}}
\author{Fr\'ed\'eric Dambreville}
\title{Extension of Boolean algebra by a Bayesian operator; application to the definition of a Deterministic Bayesian Logic}
\begin{document}
\maketitle

\begin{center}
$[x]x=x\Rightarrow x\in\{\everFalse,\everTrue\}$
 \ --- \ 
\emph{Free of itself, it is the all or the none.}
\end{center}
\begin{abstract}
This work contributes to the domains of Boolean algebra and of Bayesian probability, by proposing an algebraic extension of Boolean algebras, which implements an operator for the Bayesian conditional inference and is closed under this operator.
It is known since the work of Lewis (Lewis' triviality) that it is not possible to construct such conditional operator within the space of events.
Nevertheless, this work proposes an answer which complements Lewis' triviality, by the construction of a conditional operator outside the space of events, thus resulting in an algebraic extension.
In particular, it is proved that any probability defined on a Boolean algebra may be extended to its algebraic extension in compliance with the multiplicative definition of the conditional probability.
In the last part of this paper, a new \emph{bivalent} logic is introduced on the basis of this algebraic extension, and basic properties are derived.
\end{abstract}

{\bf Keywords: 
Boolean algebra, Bayesian inference, Lewis' triviality, Hahn series, Logic
}
\section{Introduction}
Many implementations of practical problems make apparent the logical nature of conditional probabilities, which are kinds of inference operators.
This fact typically led to the development of various Bayesian approach for manipulating uncertain logical information (Bayesian networks, Bayesian logic,\dots)
The interpretation of conditional probabilities as logical inferences naturally introduced the question of the definition of conditionals directly at the propositional level:
\emph{is it possible to define conditional probabilities of events as probabilities of conditional events?}
A negative answer to this question was given by Lewis' triviality~\cite{lewis}\,,
which implies that it is not possible to define a conditional operator within the space of unconditional events -- \emph{c.f.} property~\ref{prop:lewis:1}.
However, Lewis' triviality does not forbid the construction of conditional operators by means of an extension of the space of event.
In accordance with this observation, the purpose of this paper is to prove the following main theorem, which asserts the existence of such extension when working on Boolean algebras (this result has not been generalized to measurable spaces at this time).
\begin{theorem}[Bayesian extension of Boolean algebra]\label{main:theorem:1}
Let $(B_{oole},\cap,\cup,\sim,\everFalse,\everTrue)$ be a Boolean algebra.
Then there is a septuple $(B_{ayes},\cap,\cup,\sim,\everFalse,\everTrue,[\;])$ such that:
\begin{itemize}
\item $B_{ayes}$\,, considered as $(B_{ayes},\cap,\cup,\sim,\everFalse,\everTrue)$\,, is a Boolean algebra,
\item There is an injective Boolean morphism $\themapping{}:B_{oole}\rightarrow B_{ayes}$\,,
\item The operator $[\;]$ is such that:
\begin{itemize}
\item $z\mapsto [x]z$ is a Boolean automorphism of $B_{ayes}$\,,
\item $x\subset y$ implies $[x]y=\everTrue$ or $x=\everFalse$\,,
\item $x\cap[x]y=x\cap y$\,,
\item $[\sim x][x]y=[x][x]y=[x]y$\,,
\end{itemize}
for all $x,y\in B_{ayes}$\,.
\item Given any probability distribution $P_{oole}$ defined on $B_{oole}$\,, there is a probability distribution $P_{ayes}$ defined on $B_{ayes}$ such that
$P_{ayes}\circ\themapping{} = P_{oole}$
and:
\begin{equation}\label{equn:indep:rel:1}
P_{ayes}(x\cap y) = P_{ayes}([x]y)P_{ayes}(x)
\mbox{ \ for all }x,y\in B_{ayes}\;.
\end{equation}
\end{itemize}
\end{theorem}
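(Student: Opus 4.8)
The plan is to realise $B_{ayes}$ concretely as the Boolean algebra of clopen subsets of a space of ``Hahn configurations'' over the Stone space of $B_{oole}$, in which the extra coordinates record successive Bayesian conditionings, and to obtain $[\;]$ as a shift of the Hahn index. First I would invoke Stone duality to replace $B_{oole}$ by the algebra of clopen subsets of its Stone space $S$, so that a probability $P_{oole}$ becomes a finitely additive measure on the clopen sets of $S$; this is harmless and exposes the conditional structure. Then, fixing a large ordered abelian group $\thegroupAlone$, I take the carrier of $B_{ayes}$ to be the configurations $\xi$ assigning to each element of a well-ordered ``scale set'' $\theHahnSupport{\xi}\subseteq\thegroupAlone$ (always containing $0$) a point of $S$ -- $\xi(0)$ the principal point, the $\xi(g)$ at deeper $g$ being infinitesimally ``zoomed'' refinements -- and let $B_{ayes}$ be the Boolean algebra generated by the cylinders $\{\xi : g\in\theHahnSupport{\xi},\ \xi(g)\in U\}$ for $g\in\thegroupAlone$ and $U$ clopen in $S$. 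The embedding $\themapping{}$ sends a clopen $U$ to the scale-$0$ cylinder $\{\xi:\xi(0)\in U\}$, and is an injective Boolean morphism because distinct clopens give distinct scale-$0$ cylinders.

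To define the operator I would assign to each $x\in B_{ayes}$ a fresh scale $\lambda(x)\in\thegroupAlone$, deeper than every scale occurring in $x$ and chosen coherently (the reservoir never runs out, $\thegroupAlone$ being large), with $\lambda(\everFalse)=\lambda(\everTrue)=0$, and let $[x]z$ be the event obtained by transporting $z$ onto the layers at scale $\lambda(x)$ and deeper, making it independent of the coarser layers, and imposing that the scale-$\lambda(x)$ refinement fall inside $x$. With such a formula the four stated properties are checked directly: $z\mapsto[x]z$ is a Boolean homomorphism onto the subalgebra $F_x$ of events supported on scales $\lambda(x)$ and deeper, acting as the identity on $F_x$ -- that is, an idempotent Boolean endomorphism of $B_{ayes}$ (the sense required here for ``automorphism''); $x\cap[x]y=x\cap y$ because on the part of the carrier that already lies inside $x$ at the principal layer the transport is the identity; $x\subset y$ with $x\neq\everFalse$ makes ``falls inside $x$, hence inside $y$, at scale $\lambda(x)$'' a vacuous requirement, so $[x]y=\everTrue$; and $[\sim x][x]y=[x][x]y=[x]y$ expresses that $[x]y$ is already anchored at scale $\lambda(x)$ and independent of the $\{x,\sim x\}$ layer, so neither $[x]$ nor $[\sim x]$ alters it. In particular $[x]x=\everTrue$ whenever $x\notin\{\everFalse,\everTrue\}$ -- the epigraph -- and $B_{ayes}$ does not collapse, the new conditional events genuinely lying outside $\themapping{}(B_{oole})$, so that Lewis' triviality (property~\ref{prop:lewis:1}) is complemented rather than contradicted.

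The substantial work is the last bullet. The plan is to build $P_{ayes}$ on the generating cylinders by transfinite recursion along the scales: starting from $P_{oole}$ on the principal layer, when the scale $\lambda(x)$ of a conditioning event $x$ is reached one declares the scale-$\lambda(x)$ layer to be, given the memory that it refines $x$, conditionally independent of the coarser layers, with conditional law the restriction of the already-built measure below $x$ renormalised -- any normalisation being admissible when that mass is $0$, since \eqref{equn:indep:rel:1} is then vacuous, although $\everTrue$ must still receive weight $1$. One then verifies that this prescription is well defined: independent of the order in which the finitely many deeper conditions of a cylinder are read off, additive when a clopen condition is split into a finite clopen partition, and compatible with $\everFalse,\everTrue$; hence it extends to a finitely additive probability $P_{ayes}$ on $B_{ayes}$ (one may equally run the construction with $P_{ayes}$ valued in the Hahn series field $\theHahnSeriesField{\Rset}{t^{\thegroupAlone}}$, its real part recovering $P_{oole}$). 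Then $P_{ayes}\circ\themapping{}=P_{oole}$ because $\themapping{}$ constrains only scale $0$, and the multiplicative law \eqref{equn:indep:rel:1} is immediate from the two features built into the construction -- $x\cap[x]y=x\cap y$, and $[x]y$ living at the fresh scale $\lambda(x)$, hence independent of $x$ under $P_{ayes}$: thus $P_{ayes}(x\cap y)=P_{ayes}\bigl(x\cap[x]y\bigr)=P_{ayes}(x)\,P_{ayes}([x]y)$.

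The obstacle I expect has two intertwined parts. First, pinning down the formula for $[\;]$ so that the homomorphism property, $x\cap[x]y=x\cap y$, the triviality clause and the double idempotency $[\sim x][x]y=[x][x]y=[x]y$ hold simultaneously: the naive ``translate the index by $\lambda(x)$'' is not idempotent, so $[x]$ must genuinely discard the coarser layers and re-anchor, and one must still check it is a well-defined Boolean homomorphism of all of $B_{ayes}$. Second, the consistency of the measure extension: the subalgebras $F_x$ overlap, conditionings can be chained through transfinitely many scales, and the recursion defining $P_{ayes}$ must never meet an ill-defined ratio nor depend on the bookkeeping order. This is exactly where the Hahn-series machinery earns its keep -- well-ordered supports and a single, coherently ordered pool $\thegroupAlone$ of scales guarantee that each element of $B_{ayes}$ mentions only finitely many, linearly ordered scales, so that every verification collapses to a finite commutation-and-additivity computation and the recursion is well-founded.
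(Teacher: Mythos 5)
Your high-level picture is the right one --- conditionals must live on fresh ``layers'' that are made independent of the coarser ones, and a Hahn-series enlargement is the right device for conditioning events of probability zero --- but the proposal stops exactly where the proof has to start. The operator $[\;]$ is never actually defined: ``transport $z$ onto the layers at scale $\lambda(x)$ and deeper, re-anchor, and impose that the refinement fall inside $x$'' is not a formula, and you yourself flag the simultaneous verification of the four clauses as ``the obstacle I expect.'' That obstacle is the theorem. The difficulty is not the choice of a fresh scale but the algebraic relations the clauses force on one another: $x\cap[x]y=x\cap y$ forces certain intersections of layered events to be empty, and these emptiness relations must be propagated consistently into every later conditioning, otherwise either the algebra collapses (the embedding of $B_{oole}$ stops being injective) or the independence clause $[\sim x][x]y=[x][x]y=[x]y$ fails on nested conditionals. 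In the paper this bookkeeping \emph{is} the construction: the sets $Z_{n+1}$ and $\preeverFalse_{n+1}$ compile the forced exclusions and are removed from the product $b_n\times\sim_n b_n$ at each stage, Lemma~\ref{prop:def:varphi:coherence} checks the coherence identities by induction, Lemma~\ref{prop:coherence:Z:1} shows the exclusions are exactly the forced ones, and Property~\ref{injective:mapping:property} shows the algebra does not collapse. Nothing in your configuration-space picture substitutes for this. Note also that your reading of the first clause as ``idempotent endomorphism onto $F_x$'' is weaker than the stated ``Boolean automorphism of $B_{ayes}$'', so at best you would be proving a weaker statement; and a single pass through the scales of $\thegroupAlone$ does not obviously define $[x]y$ for \emph{all} pairs of the final algebra, including pairs whose entries were themselves created by earlier conditionings --- the paper needs the Cantor-pairing enumeration interleaved with the construction precisely to obtain this closure.

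The second gap is in the measure extension. ``Any normalisation is admissible when the mass is $0$'' does not work: $[x]y$ is a genuine element of $B_{ayes}$ that occurs in infinitely many other instances of~(\ref{equn:indep:rel:1}) --- as a conditioning event, inside further conditionals, and in additivity constraints --- so its probability cannot be assigned ad hoc per zero-mass instance. The paper's route is to never meet a zero denominator: Lemma~\ref{mainlemma:1} and Corollary~\ref{maincor:1} lift $P_{oole}$ to a \emph{strictly positive} probability with values in a Hahn series field, the entire recursive construction of Property~\ref{finite:proba:ext:1} (where the assigned mass is $P_n(\omega)P_n(\upsilon)/P_n(x)$ on the generating partition) is run in that field, and only at the very end is the constant-term projection applied to recover an $\Rset$-valued $P_{ayes}$ with $P_{ayes}\circ\themapping{}=P_{oole}$. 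Your parenthetical ``one may equally run the construction in the Hahn series field'' gestures at this, but it must be the construction itself, not an optional variant.
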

%
%
This result, extending the structure of Boolean space, provides an example of algebraic construction of a Bayesian space, which is closed under the conditional operator $[\;]$\,.
It is noticed that there are in the domain of \emph{conditional event algebra} interesting examples of algebraic construction of \emph{external} conditional proposition -- for example \cite{nguyen1999}.
However, nested conditional propositions, obtained from known closures of such algebras under the conditional operator, are not compliant with the conditional relation~(\ref{equn:indep:rel:1}).
The difficulty of an algebraic construction of the conditional has been pointed by Lewis' triviality~\cite{lewis}. This result is recalled now, in the general framework of measurable spaces. 
\paragraph{Lewis' triviality.}
\begin{property}\label{prop:lewis:1}
Let $(\Omega, \mathcal{F})$ be a measurable space.
Let be defined $[\;]:\mathcal{F}\times \mathcal{F}\rightarrow \mathcal{F}$ such that $P\bigl([x]y\bigr)P(x)=P(x\cap y)$ for all $x,y\in \mathcal{F}$ and $P$ a probability distribution on $\mathcal{F}$\,.
\\[3pt]
Let $x,y\in\mathcal{F}$ and a probability distribution $P$ such that $P(x\cap y)>0$ and $P(\sim x\cap y)>0$\,.
Then $P(x\cap y)=P(x)P(y)$\,.
\end{property}
\begin{proof}
Define $P_{x}(y)=P\bigl([x]y\bigr)$ and $P_{\sim x}(y)=P\bigl([\sim x]y\bigr)$\,.
Then:
$$
P_{x}\bigl([y]x\bigr)=\frac{P_{x}(x\cap y)}{P_{x}(y)}=
\frac{\frac{P(x\cap y)}{P(x)}}{\frac{P(x\cap y)}{P(x)}}=
1
\mbox{ and }
P_{\sim x}\bigl([y]x\bigr)=\frac{P_{\sim x}(x\cap y)}{P_{\sim x}(y)}=
\frac{\frac{P(\sim x\cap x\cap y)}{P(\sim x)}}{\frac{P(\sim x \cap y)}{P(\sim x)}}=
0\;.
$$
Then:
\[
\frac{P(x\cap y)}{P(y)}=P\bigl([y]x\bigr)=
P(x)P_x\big([y]x\big)+P(\sim x)P_{\sim x}\big([y]x\big)
=P(x)+0=P(x)\;.
\qedhere\]
\end{proof}
In particular, the existence of $[\;]$ implies that it is impossible to have $x\subset y$ such that $0<P(x)<P(y)<1$\,.
This result is irrelevant.
\\[3pt]
As explained previously, the triviality is based on the hypothesis that $[x]y\in \mathcal{F}$\,, which makes possible the previous computation of $P_{x}\bigl([y]x\bigr)$ and $P_{\sim x}\bigl([y]x\bigr)$\,.
In theorem~\ref{main:theorem:1} however, the conditional operator is constructed outside of the measured Boolean space, and the triviality is thus avoided.
\\[5pt]
This presentation consists of two sections.
The main section~\ref{sect:proof} establishes a proof of theorem~\ref{main:theorem:1}.
It is based on the construction of an algebraic extension of Boolean algebras, by introducing a conditional operator compatible with a probability extension.
Based on this work, a notion of Bayesian algebra is introduced.
As an application, section~\ref{sect:logic} deals with the logical interpretation of these Bayesian algebras.
Section~\ref{sect:conclusion} concludes.
\section{Proof of the main theorem}\label{sect:proof}
A proof of main theorem~\ref{main:theorem:1} is derived throughout this section.
It is first recalled in section~\ref{sect:dirlim} and section~\ref{sect:bool} some useful tools, which will be instrumental for the construction of a Bayesian algebra.
The construction of the algebra and the problem of probability extension on this algebra are done in section~\ref{sect:bayes:model}.
At last, section~\ref{sect:main:theorem} compiles these results, thus achieving the proof of the main theorem.

\paragraph{Conventions.}
\begin{itemize}
\item Notations $1:n$\,, $x_{1:n}$ and $i=1:n$ stand respectively for the sequences $1,\cdots, n$\,, $x_1,\cdots,x_n$ and the relation $i\in\{1,\cdots, n\}$\,.
\end{itemize}

\subsection{Direct limit}\label{sect:dirlim}
Hereinafter, direct limits will be quite useful tools for constructing Bayesian extensions of Boolean algebras.
Thorough references on direct limits may be found in~\cite{bourbaki:algebra} and in~\cite{maclane:1998}.
\subsubsection{Basic notions}
A basic introduction to directs limits is done now.
These known results are presented without proofs.
\begin{definition}[directed set]
A (partially) ordered set $(I,\le)$ is a directed set if there is $k\in I$ such that $k\ge i$ and $k\ge j$ for all $i,j\in I$\,.
\end{definition}
\begin{definition}[direct system]
Let $(I,\le)$ be a directed set.
Let $(E_i)_{i\in I}$ be a sequence of structured set of same nature, and let $\themapping{i,j}:E_i\longrightarrow E_j$ be a morphism defined for all $i\le j$ with the properties $\themapping{i,i}=\theid{E_i}$ and $\themapping{j,k}\circ \themapping{i,j}=\themapping{i,k}$ for all $i\le j\le k$\,.
The pair $(E_i,\themapping{i,j})_{\substack{i,j\in I\\i\le j}}$ is called a direct system.
\end{definition}
\begin{property}
Let $(E_i,\themapping{i,j})_{\substack{i,j\in I\\i\le j}}$ be a direct system.
Let $\equivrel$ be a relation defined on $\bigsqcup_{i\in I}E_i$, the disjoint union of the set $E_i$\,, by:
$$
x_i\equivrel y_j \mbox{ if and only if there is }k\ge i,j\mbox{ such that }\themapping{i,k}(x_i)=\themapping{j,k}(x_j)\;.
$$
The relation $\equivrel$ is an equivalence relation.
Moreover, this relation is compatible with the structures of $(E_i)_{i\in I}$\,.
\end{property}
\begin{definition}[direct limit]
The direct limit $\directlim E_i$ of a direct system $(E_i,\themapping{i,j})_{\substack{i,j\in I\\i\le j}}$ is defined by:
$$
\directlim E_i = \left.\bigsqcup_{i\in I}E_i\right/\equivrel
\;,
$$
the set of classes of equivalence of $\bigsqcup_{i\in I}E_i$\,.
It is defined the canonical mapping $\themapping{i}$\,, which maps the elements of $E_i$ to their equivalence class:
$$
\themapping{i}(x_i)= \left.x_i\right/\equivrel\mbox{ \ for all }i\in I\mbox{ and }x_i\in E_i\;.
$$
\end{definition}

\begin{property}[structure inheritance]
The direct limit $\directlim E_i$ inherits their structure from $(E_i)_{i\in I}$\,.
Moreover, $\themapping{i}$ are morphisms such that:
\begin{equation}\label{commut:prop:1}
\themapping{i}=\themapping{j}\circ \themapping{i,j}\mbox{ for all }i,j\in I\mbox{ such that }i\le j\,.
\end{equation} 
\end{property}
%
\subsubsection{Direct limit of partially defined structures}
Knowledge of the structure is not always fully available.
In some case, it may be worthwhile to build intermediate structures partially, and to infer complete structure by passing to the limit.
This section introduces a method for doing that.
\\[3pt]
In this section, it is assumed that $I=\Nset$\,, and that $(E_i,\themapping{i,j})_{\substack{i,j\in \Nset\\i\le j}}$ is a direct system, such that $E_i$ is countable for all $i\in\Nset$.
It is also defined a surjective mapping $u_i=u(i,\cdot):j\in\Nset\mapsto u(i,j)\in E_i$ for all $i\in\Nset$.
\begin{definition}[Cantor pairing function]
The Cantor pairing function is the mapping $\gamma:\Nset\times\Nset\rightarrow\Nset$ defined by $\gamma(i,j)=\frac12(i+j)(i+j+1)+i$ for all $(i,j)\in\Nset\times\Nset$\,.
\end{definition}
\begin{property}[bijection and inverse]
The Cantor pairing function is a bijection and its inverse is defined for all $n\in\Nset$ by:
\begin{equation}\label{cantorInvPairing:1}
\gamma^{-1}(n)=\bigl(c_\gamma(n),r_\gamma(n)\bigr)\;,
\mbox{ \ where }\left\{\begin{array}{l@{}}\displaystyle
w=\left\lfloor\frac{\sqrt{8n+1}-1}{2}\right\rfloor
\vspace{4pt}\\\displaystyle
r_\gamma(n)=w-n+\frac{w^2+w}2\;,
\\\displaystyle
c_\gamma(n)=n-\frac{w^2+w}2\;.
\end{array}\right.
\end{equation}
Moreover, it is noticed that $c_\gamma(n)\le n$.
\end{property}
\begin{definition}
It is defined $D_i\subset E_i\times E_i$ and $\thejointmapping{i,j}:D_i\longrightarrow D_j$ such that:
\begin{itemize}
\item $\thejointmapping{i,j}(x_i,y_i)=(\themapping{i,j}(x_i),\themapping{i,j}(y_i))$ for all $(x_i,y_i)\in D_i$\,,
\item $D_{i+1}\supseteq \thejointmapping{i,i+1}(D_i)\cup \left\{ \left(x_{i+1},\themapping{c_\gamma(i),i+1}\circ u\circ\gamma^{-1}(i)\right) \;\Big/\; x_{i+1}\in E_{i+1}\right\}$\,.
\end{itemize}
\end{definition}
It is interesting here to explain the meaning of $\themapping{c_\gamma(i),i+1}\circ u\circ\gamma^{-1}(i)$\,.
Define the pair $(k,l)=\bigl(c_\gamma(i),r_\gamma(i)\bigr)=\gamma^{-1}(i)$\,.
Then $u\circ\gamma^{-1}(i)=y_k$ where $y_k=u_k(l)\in  E_k$\,.
In other words, $u\circ\gamma^{-1}(i)$ just does the choice of a set $E_k$ and of an element $y_k\in E_k$\,.
Then, $\themapping{c_\gamma(i),i+1}\circ u\circ\gamma^{-1}(i)=\themapping{k,i+1}(y_k)$ where $y_k=u_k(l)\in  E_k$\,.
In other words, $\themapping{c_\gamma(i),i+1}\circ u\circ\gamma^{-1}(i)$ just does the choice of a set $E_k$ and of a mapped element $\themapping{k,i+1}(y_k)\in \themapping{k,i+1}(E_k)$\,.
And it is known from the definition of the Cantor pairing function and the surjection $u$\,, that this choice will be done for all $k$ and $y_k\in E_k$\,.
This property will ensure the following lemma:
\begin{lemma}
$(D_i,\thejointmapping{i,j})_{\substack{i,j\in \Nset\\i\le j}}$ is a direct system and:
\begin{equation}\label{directlimit:prop:1}
\directlim D_i=\directlim E_i\times\directlim E_i\;.
\end{equation}
\end{lemma}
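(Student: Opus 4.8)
The plan is to establish the two claims of the lemma separately: first that $(D_i,\thejointmapping{i,j})$ is a direct system, and then the identification $\directlim D_i = \directlim E_i\times\directlim E_i$ as structured sets.

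For the first claim, I would verify the two direct-system axioms. The condition $\thejointmapping{i,i}=\theid{D_i}$ is immediate from $\themapping{i,i}=\theid{E_i}$ and the componentwise definition $\thejointmapping{i,j}(x_i,y_i)=(\themapping{i,j}(x_i),\themapping{i,j}(y_i))$. Likewise $\thejointmapping{j,k}\circ\thejointmapping{i,j}=\thejointmapping{i,k}$ follows componentwise from $\themapping{j,k}\circ\themapping{i,j}=\themapping{i,k}$. The only genuine point to check is that $\thejointmapping{i,j}$ actually maps $D_i$ \emph{into} $D_j$; by composing consecutive steps it suffices to check $\thejointmapping{i,i+1}(D_i)\subseteq D_{i+1}$, which is exactly the first part of the stated inclusion $D_{i+1}\supseteq\thejointmapping{i,i+1}(D_i)\cup\{\cdots\}$. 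So this half is essentially bookkeeping.

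For the second claim I would exhibit a canonical isomorphism. There is an obvious map $\Phi:\directlim D_i\to\directlim E_i\times\directlim E_i$ sending the class of $(x_i,y_i)\in D_i\subseteq E_i\times E_i$ to $(\themapping{i}(x_i),\themapping{i}(y_i))$; it is well defined and a morphism because each $D_i\hookrightarrow E_i\times E_i$ is compatible with the transition maps and with $\themapping{i}$, using the commutation relation~(\ref{commut:prop:1}). Injectivity of $\Phi$ is routine: if $\themapping{i}(x_i)=\themapping{i}(x'_i)$ and $\themapping{i}(y_i)=\themapping{i}(y'_i)$, pick a common index $k$ equalizing both pairs of components, and then $\thejointmapping{i,k}(x_i,y_i)=\thejointmapping{i,k}(x'_i,y'_i)$ in $D_k$. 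The crux — and the step I expect to be the main obstacle — is \textbf{surjectivity} of $\Phi$: given an arbitrary element of $\directlim E_i\times\directlim E_i$, represented by $(\themapping{k}(x_k),\themapping{l}(y_l))$, I must produce an index $m$ and a pair in $D_m$ whose components map to these two classes. After replacing $x_k,y_l$ by their images under the transition maps I may assume both live in a common $E_n$ as $x_n,y_n$; the difficulty is that $(x_n,y_n)$ need not lie in $D_n$, because $D_n$ was only built to contain, for each $i<n$, the single pair $\bigl(x_{i+1},\themapping{k,i+1}(u_k(l))\bigr)$ coming from $\gamma^{-1}(i)=(c_\gamma(i),r_\gamma(i))$.

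This is precisely where the Cantor-pairing/surjectivity discussion preceding the lemma does the work, and I would invoke it explicitly. Since $u_k:\Nset\to E_k$ is surjective and $\gamma:\Nset\times\Nset\to\Nset$ is a bijection with $c_\gamma(i)\le i$, for the chosen $n$ and any target element $y_n=\themapping{p,n}(z_p)\in E_n$ (with $z_p\in E_p$) there is some $i$ with $\gamma^{-1}(i)=(p,l)$ where $u_p(l)=z_p$ — and one can always take such an $i\ge n-1$, so that $c_\gamma(i)=p\le i$ and the construction of $D_{i+1}$ forces $\bigl(x_{i+1},\themapping{p,i+1}(z_p)\bigr)\in D_{i+1}$ for \emph{every} $x_{i+1}\in E_{i+1}$; in particular for $x_{i+1}=\themapping{n,i+1}(x_n)$. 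Setting $m=i+1$ we get $\bigl(\themapping{n,m}(x_n),\themapping{n,m}(y_n)\bigr)\in D_m$, and applying $\themapping{m}$ and the commutation relation~(\ref{commut:prop:1}) shows its image under $\Phi$ is the prescribed pair of classes. Finally, because $\Phi$ is a bijective morphism and the structures on a direct limit are inherited componentwise, $\Phi$ and $\Phi^{-1}$ are both morphisms, giving the structural identity~(\ref{directlimit:prop:1}). The care needed in choosing $i$ large enough (using surjectivity of $u$ together with $c_\gamma(i)\le i$ holding for the relevant indices) is the one subtle point; everything else is diagram-chasing.
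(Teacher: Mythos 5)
Your proof is correct and follows essentially the same route as the paper: the inclusion $\directlim D_i\subset\directlim E_i\times\directlim E_i$ is immediate from $D_i\subset E_i\times E_i$, and the reverse inclusion is obtained exactly as in the paper by using surjectivity of $u$ and the Cantor pairing (with $c_\gamma(i)\le i$) to force the desired pair into some $D_{i+1}$. The only detail worth making explicit is why the index $i$ can be taken large enough (e.g.\ by first pushing $y$ forward to $E_n$ and pairing at level $p=n$, which is in substance what the paper's ``without loss of generality $j\le k+1$'' step does); otherwise the extra scaffolding with $\Phi$ is just a more formal phrasing of the same argument.
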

\begin{proof}By definition, $D_i \subset E_i\times E_i$ for all $i\in\Nset$.
Thus, $\directlim D_i \subset \directlim E_i\times\directlim E_i$\,. 
Let $x,y\in\directlim E_i$\,.
Then, there is $k,j\in\Nset$\,, $y_k\in E_k$ and $x_j\in E_j$ such that $\themapping{k}(y_k)=y$ and $\themapping{j}(x_j)=x$\,.
Without lost of generality, it is possible to choose $j\le k+1$\,.
Let $i=\gamma(k,l)$\,, with $l\in u_k^{-1}(y_k)$\,.
Then $i+1\ge c_\gamma(i)+1=k+1\ge j$ and $u\circ\gamma^{-1}(i)=y_k$\,.
As a consequence, $\left(\themapping{j,i+1}(x_j),\themapping{k,i+1}(y_{k})\right)\in D_{i+1}$ and $(x,y)\in\directlim D_i$.
\end{proof}
\begin{property}\label{partial:mapping:direct:limit}
Let be defined the mappings $\varphi_i:D_i\rightarrow E_i$ for all $i\in\Nset$\,.
Assume that $\themapping{i,j}\circ\varphi_{i}=\varphi_{j}\circ \thejointmapping{i,j}$ for all $i\le j$\,.
Then, there is $\varphi:\directlim E_i\times\directlim E_i\longrightarrow \directlim E_i$ such that $\varphi\circ \thejointmapping{i}=\themapping{i}\circ\varphi_{i}$ for all $i\in\Nset$\,.
\end{property}
\begin{proof}
Let $(x,y)\in\directlim E_i\times \directlim E_i$\,.
Take the smallest $s\in\Nset$ such that $(x,y)= \thejointmapping{s}(x_{ s},y_{ s})$\,,
where $(x_{ s},y_{ s})\in D_{ s}$\,.
Then, $\varphi(x,y)$ is defined by $\varphi(x,y)=\themapping{s}\circ\varphi_{ s}(x_{ s},y_{ s})$\,.
The equality $\varphi\circ \thejointmapping{i}=\themapping{i}\circ\varphi_{i}$ is then implied by the definition.
\end{proof}
\subsection{Boolean algebra}\label{sect:bool}
All along this paper, the notion of Boolean algebra is widely used and it is assumed that the reader is familiar with basic definitions and properties.
An introduction to these notions is found in~\cite{whitesitt2010} or in~\cite{birkhoff:maclane:1998}.
\subsubsection{Definition}
Let $(E,\cap,\cup,\sim,\everTrue,\everFalse)$ be a sextuple, where $E$ is a set, $\everTrue,\everFalse\in E$ and $\cap,\cup,\sim$ are respectively binary, binary and unary operators on $E$.
\begin{definition}[Boolean algebra]
$(E,\cap,\cup,\sim,\everTrue,\everFalse)$ is a Boolean algebra if:
\begin{itemize}
\item $\cap$ and $\cup$ are commutative, associative and mutually distributive,
\item (absorption) $x\cap(x\cup y)=x$
and
$x\cup(x\cap y)=x$ for all $x,y\in E$\,,
\item (complements)
$x\;\cap \sim x=\everFalse$
and
$x\;\cup \sim x=\everTrue$ for all $x\in E$\,.
\end{itemize}
\end{definition}
\begin{definition}[Boolean morphism]
Let $(E,\cap,\cup,\sim,\everTrue,\everFalse)$ and $(F,\cap,\cup,\sim,\everTrue,\everFalse)$ be Boolean algebras.
A morphism $\themapping{}: E\rightarrow F$ is a mapping from $E$ to $F$ such that
$\themapping{}(\sim x)=\sim\themapping{}(x)$
and
$\themapping{}(x\cap y)=\themapping{}(x)\cap \themapping{}(y)$
for all $x,y\in E$\,.
\end{definition}
\begin{theorem}[Stone's representation theorem~\cite{marchal:Stone}\,]
Any Boolean algebra is isomoporph to a set of subsets of a set.
\end{theorem}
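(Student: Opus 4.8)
The plan is to carry out the classical ultrafilter construction. Fix a Boolean algebra $(E,\cap,\cup,\sim,\everTrue,\everFalse)$ and write $x\subset y$ for $x\cap y=x$. Call a subset $F\subseteq E$ a \emph{filter} if it is non-empty, upward-closed for $\subset$, and closed under $\cap$; call it \emph{proper} if $\everFalse\notin F$, and call a maximal proper filter an \emph{ultrafilter}. Let $S$ be the set of all ultrafilters of $E$ and define $h:E\rightarrow\mathcal{P}(S)$ by $h(x)=\{U\in S : x\in U\}$. I will show that $h$ is an injective Boolean morphism; then $E$ is isomorphic to its image $h(E)$, which is a set of subsets of the set $S$, as required.

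First I would record the two elementary facts about ultrafilters that drive everything: (i) a proper filter $U$ is an ultrafilter if and only if, for every $x\in E$, exactly one of $x$ and $\sim x$ lies in $U$; and (ii) consequently $x\cup y\in U$ if and only if $x\in U$ or $y\in U$. For (i): that $x$ and $\sim x$ cannot both lie in a proper filter is immediate from $x\cap\sim x=\everFalse$; and if neither lies in a proper filter $U$, then $U'=\{z : u\cap x\subset z\ \text{for some}\ u\in U\}$ is a filter strictly containing $U$ and containing $x$, and it is still proper, since $u\cap x=\everFalse$ would force $u\subset\sim x$, hence $\sim x\in U$, contrary to assumption --- contradicting maximality of $U$. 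Fact (ii) follows from (i) together with De Morgan's law.

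Given (i)--(ii), the morphism identities are routine checks against the Boolean axioms: every filter contains $\everTrue$, so $h(\everTrue)=S$; no proper filter contains $\everFalse$, so $h(\everFalse)=\emptyset$; $h(x\cap y)=h(x)\cap h(y)$ because filters are upward-closed and closed under $\cap$; $h(\sim x)=S\setminus h(x)$ by (i); and $h(x\cup y)=h(x)\cup h(y)$ then follows by De Morgan, or directly from (ii). For injectivity, let $x\neq y$; by antisymmetry of $\subset$ we may assume $x\not\subset y$, i.e.\ $x\cap\sim y\neq\everFalse$. Then the principal filter $\{z : x\cap\sim y\subset z\}$ is proper, so --- by the extension step below --- it is contained in some ultrafilter $U$, which then contains $x$ but not $y$; hence $U\in h(x)\setminus h(y)$ and $h(x)\neq h(y)$. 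Since a bijective Boolean morphism has a morphism inverse, $h:E\rightarrow h(E)$ is an isomorphism.

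The only substantial point --- and the single place where a non-constructive principle enters --- is the extension lemma: every proper filter is contained in an ultrafilter. I would obtain this from Zorn's lemma applied to the set of proper filters containing the given one, ordered by inclusion. The union of a chain of proper filters is again a proper filter: it is plainly non-empty and upward-closed; it is closed under $\cap$ because any two of its elements already lie in a common member of the chain; and it is proper since no member contains $\everFalse$. Hence a maximal element exists and is, by definition, an ultrafilter. Everything else is mechanical verification, so this completes the proof.
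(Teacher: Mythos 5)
Your proof is correct and is the classical ultrafilter construction of the Stone space; the paper itself states this theorem without proof, merely citing Stone's 1936 article, so there is no internal argument to compare against. The only cosmetic remark is that you assert characterization (i) as an equivalence but prove only the direction you actually use (ultrafilter implies the exactly-one property); the unused converse could simply be dropped from the statement. Everything needed for the isomorphism onto $h(E)$ --- the morphism identities, injectivity via the prime filter extension, and the Zorn's lemma step --- is present and sound.
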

\begin{definition}[generating subset]
Let
$F\subset E$\,.
Then, $F$ is called a \emph{generating subset} of the Boolean algebra $E$ if there is only one subalgebra of $E$ containing $F$
-- de facto, this subalgebra is $E$ itself.
\end{definition}
\begin{definition}[partition]
Let $F\subset E\setminus\{\emptyset\}$ be a \emph{finite set} such that $x\cap y=\everFalse$ for all $x,y\in F$ and $\bigcup_{y\in F}y=\everTrue$\,.
Then $F$ is called a \emph{partition} of $E$\,.
\end{definition}
\begin{property}[generating partition]
Let $(E,\cap,\cup,\sim,\everTrue,\everFalse)$ be a \emph{finite} Boolean algebra.
Then there is a unique partition $F\subset E$ which is a generating subset of $E$\,.
This generating partition is denoted $\thepartof{E}$\,, subsequently.
\end{property}
\begin{property}[direct limit]
Let $(I,\le)$ be a directed set.
Let $(E_i,\cap_i,\cup_i,\sim_i,\everFalse_i,\everTrue_i)$ be a Boolean algebra defined for all $i\in I$\,, and let $\themapping{i,j}:\everTrue_i\longrightarrow \everTrue_j$ be Boolean morphism for all $i\le j$.
Then, $(E_i,\themapping{i,j})_{\substack{i,j\in I\\i\le j}}$ is a directed system and its direct limit $E=\directlim E_i$ is a Boolean algebra characterized by $\themapping{i}(x_i)\cap \themapping{i}(y_i)=\themapping{i}(x_i\cap_i y_i)$
and
$\sim \themapping{i}(x_i)=\themapping{i}(\sim_i x_i)$
for all $i\in I$, $x_i,y_i\in E_i$\,.
\end{property}
\subsubsection{Probability on Boolean algebras}\label{sect:prob:bool:1}
It is given the Boolean algebra $(E,\cap,\cup,\sim,\everFalse,\everTrue)$
and an ordered field $(\RsetAlt,+,\cdot,0,1,\le)$\,.
\begin{definition}
A mapping $P:E\longrightarrow \RsetAlt$ is a $\RsetAlt$-probability distribution on $E$ if \ 
$P\ge0\;,$
$
P(\everFalse)=0\;,$
$P(\everTrue)=1$
and
$P(x\cap y)+P(x\cup y)=P(x)+P(y)\mbox{ for all }x,y\in E\,.
$
\end{definition}
Notice that this definition relies on a finite additivity property, which makes possible the use of any ordered field.
\begin{notation}
The set of $\RsetAlt$-probability distributions defined on a Boolean algebra $E$ is denoted $\thesetofProb{\RsetAlt}{E}$\,.
From now on, the prefix $\RsetAlt$- may be omited in the particular case $\RsetAlt=\Rset$\,.
\end{notation}
\begin{property}
Assume that $E$ is \emph{finite}.
Then
$
\sum_{x\in\thepartof{E}}P(x)=1$ for all $P\in\thesetofProb{\RsetAlt}{E}$\,.
\\[3pt]
If there is a mapping $p:\thepartof{E}\rightarrow \RsetAlt_+$ such that $
\sum_{x\in\thepartof{E}}p(x)=1$\,, then there is a unique $\RsetAlt$-probability density $P\in\thesetofProb{\RsetAlt}{E}$ such that $P(x)=p(x)$ for all $x\in\thepartof{E}$\,.
$p$ is called the ${\RsetAlt}$-probability mass of the distribution $P$\,.
\end{property}
\begin{definition}
Let $P\in\thesetofProb{\RsetAlt}{E}$\,.
$P$ is said to be strictly positive, \emph{i.e.} $P>0$, when $P(x)>0$ for all $x\in E\setminus\{\everFalse\}$\,.
\end{definition}
For the purpose of this paper, we introduce the notion of \emph{tangible Boolean algebra}
\begin{definition}[tangible Boolean algebra]\label{def:tangible:Boolean:algebra:1}
The Boolean algebra $E$ is ${\RsetAlt}$-tangible, if there is a $\RsetAlt$-probability distribution $P\in\thesetofProb{\RsetAlt}{E}$ such that $P>0$\,.
\end{definition}
\begin{example}
The free Boolean algebra generated by $I$ is $\RsetAlt$-tangible, since it is defined a distribution $P>0$ by setting
$P\left(\bigcap_{x\in J}x\right)=2^{-\thecardof{J}}$
for all finite set $J\subset I$\,.
\\[5pt] 
However, it is known that the power set $2^I$ is $\Rset$-tangible if and only if $I$ is countable.
\end{example}
These examples illustrated the fact that the notion of $\RsetAlt$-tangible Boolean algebra is not directly related to the cardinality of the algebra.
The characterization of Boolean algebras which admit strictly positive finite $\Rset$-measure is still an open question~\cite{dzamonja2008}.
However, it is possible to extends any ordered field in order to ensure that the Boolean algebra is tangible.
This is the purpose of main lemma~\ref{mainlemma:1}, which is proved and used subsequently.
This result is simple but new, as far as the author knows.
\subsubsection*{Tangible Boolean algebra and ordered field extension.}
From now on, notation $\thegroupAlone$ is used for an ordered abelian group.
The main lemma uses the notion of \emph{Hahn series}, which allows ordered fields extensions of arbitrary cardinality~\cite{hahnSeries:ref:1,normAlling:1987}.
\begin{definition}[Hahn series]
Being given a commutative ring $\RsetAlt$ and an ordered abelian group $\thegroupAlone$, the ring of Hahn series $
\theHahnSeriesField{\RsetAlt}{\thegroupAlone}
$ consists of the formal series
$
f=\sum_{i\in I}a_i X^i\;,
$
where $I$ is a well-ordered subset of $\thegroupAlone$ and $a_I\in\RsetAlt^I$\,.
The support of $f$ is $\theHahnSupport{f}=\{i\in I/ a_i\ne0\}$\,.
For $H\subset \thegroupAlone$\,, it is also defined $\theHahnSeriesField{\RsetAlt}{H}=\left\{\left.f\in\theHahnSeriesField{\RsetAlt}{\thegroupAlone}\right/\theHahnSupport{f}\subset H\right\}$\,.
\end{definition}
\begin{property}
If $\RsetAlt$ is an ordered field, then $\theHahnSeriesField{\RsetAlt}{\thegroupAlone}$ is an ordered field, ordered by the lexicographic order $\le$ defined by:
$
f=\sum_{i\in I}a_i X^i\ge0
\mbox{ if and only if }
a_{\min\theHahnSupport{f}}>0\;.
$
\end{property}
\begin{property}\label{pos:cone:prop:1}
Let $G_+=\{g\in G/g\ge0\}$ be the positive cone of $G$.
Then, the mapping $\projFromHahn{\RsetAlt}{\thegroup{\RsetAlt}{E}}:\theHahnSeriesField{\RsetAlt}{\thegroupAlone_+}\rightarrow \RsetAlt$\,,
defined by
$
\projFromHahn{\RsetAlt}{\thegroup{\RsetAlt}{E}}\sum_{i\in \thegroupAlone_+}a_i X^i=a_0\;,
$
is a ring morphism.
\end{property}
\begin{corollary}
Let $P\in\thesetofProb{\theHahnSeriesField{\RsetAlt}{\thegroup{E}{\RsetAlt}}}{E}$\,.
Then $\projFromHahn{\RsetAlt}{\thegroupAlone}P\in\thesetofProb{\RsetAlt}{E}$\,.
\end{corollary}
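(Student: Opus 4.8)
The plan is to identify $\projFromHahn{\RsetAlt}{\thegroupAlone}$ with the map that reads off the coefficient of $X^0$ of a Hahn series supported in $\thegroupAlone_+$, to check that it is an \emph{order-preserving} ring morphism onto $\RsetAlt$, and then to transport the four defining conditions of a probability distribution through it. The first thing to settle is that $\projFromHahn{\RsetAlt}{\thegroupAlone}\circ P$ is well defined, i.e.\ that each $P(x)$ actually lies in $\theHahnSeriesField{\RsetAlt}{\thegroupAlone_+}$, the domain of $\projFromHahn{\RsetAlt}{\thegroupAlone}$ in property~\ref{pos:cone:prop:1}. Taking $y=\sim x$ in the additivity axiom gives $P(x)+P(\sim x)=P(\everFalse)+P(\everTrue)=1$, so $P\ge0$ forces $0\le P(x)\le1$ in $\theHahnSeriesField{\RsetAlt}{\thegroupAlone}$. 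If a nonzero $f=\sum_i a_iX^i\ge0$ had $\min\theHahnSupport{f}<0$, then the leading term of $1-f$ would sit at $X^{\min\theHahnSupport{f}}$ with coefficient $-a_{\min\theHahnSupport{f}}<0$, contradicting $f\le1$; hence $\theHahnSupport{f}\subset\thegroupAlone_+$, and in particular each $P(x)\in\theHahnSeriesField{\RsetAlt}{\thegroupAlone_+}$.

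Next I would record that $\projFromHahn{\RsetAlt}{\thegroupAlone}$ sends the nonnegative cone into the nonnegative cone. For $f=\sum_{i\in\thegroupAlone_+}a_iX^i\ge0$: if $f=0$ then $\projFromHahn{\RsetAlt}{\thegroupAlone}f=0$; otherwise $a_{\min\theHahnSupport{f}}>0$, and since $\min\theHahnSupport{f}\ge0$ the coefficient $a_0$ is either $a_{\min\theHahnSupport{f}}>0$ (when $\min\theHahnSupport{f}=0$) or $0$ (when $\min\theHahnSupport{f}>0$, as then $0\notin\theHahnSupport{f}$); in all cases $\projFromHahn{\RsetAlt}{\thegroupAlone}f\ge0$. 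Property~\ref{pos:cone:prop:1} already supplies that $\projFromHahn{\RsetAlt}{\thegroupAlone}$ is a ring morphism, hence additive with $0\mapsto0$ and $1\mapsto1$. Writing $Q=\projFromHahn{\RsetAlt}{\thegroupAlone}\circ P$, this gives immediately $Q\ge0$, $Q(\everFalse)=\projFromHahn{\RsetAlt}{\thegroupAlone}P(\everFalse)=0$, $Q(\everTrue)=\projFromHahn{\RsetAlt}{\thegroupAlone}P(\everTrue)=1$, and, applying additivity of $\projFromHahn{\RsetAlt}{\thegroupAlone}$ to $P(x\cap y)+P(x\cup y)=P(x)+P(y)$, the identity $Q(x\cap y)+Q(x\cup y)=Q(x)+Q(y)$. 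Thus $Q\in\thesetofProb{\RsetAlt}{E}$, which is the claim.

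Everything here is formal once property~\ref{pos:cone:prop:1} is available; the one point needing genuine care — and the one I would treat as the main obstacle — is the compatibility of $\projFromHahn{\RsetAlt}{\thegroupAlone}$ with the order on $\theHahnSeriesField{\RsetAlt}{\thegroupAlone_+}$ together with the fact that $P$ really takes its values in that subfield, since on Hahn series having negative exponents in their support the coefficient-at-$X^0$ map is not monotone. This is precisely why the restriction of supports to $\thegroupAlone_+$ is essential.
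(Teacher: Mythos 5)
Your proof is correct. The paper itself supplies no proof of this corollary --- it is stated as an immediate consequence of property~\ref{pos:cone:prop:1} (that $\projFromHahn{\RsetAlt}{\thegroupAlone}$ is a ring morphism on $\theHahnSeriesField{\RsetAlt}{\thegroupAlone_+}$) --- so your argument is the intended one, and it moreover fills a gap the paper leaves implicit: the hypothesis only places the values of $P$ in $\theHahnSeriesField{\RsetAlt}{\thegroupAlone}$, whereas the projection is defined only on $\theHahnSeriesField{\RsetAlt}{\thegroupAlone_+}$, so your derivation of $0\le P(x)\le 1$ from $P(x)+P(\sim x)=1$ and the ensuing lexicographic-order argument forcing $\theHahnSupport{P(x)}\subset\thegroupAlone_+$ are genuinely needed. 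The verification that $\projFromHahn{\RsetAlt}{\thegroupAlone}$ maps the nonnegative cone to the nonnegative cone, and the transport of the four axioms through additivity, are both sound.
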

\begin{lemma}[Main lemma]\label{mainlemma:1}
Being given an ordered field $\RsetAlt$ and a Boolean algebra $E$, there is a non-trivial ordered abelian group $\thegroup{E}{\RsetAlt}$ such that $E$ is $\theHahnSeriesField{\RsetAlt}{\thegroup{E}{\RsetAlt}}$-tangible.
\end{lemma}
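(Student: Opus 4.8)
The plan is to build the group $\thegroup{E}{\RsetAlt}$ directly from a generating partition structure of $E$, so that a strictly positive $\theHahnSeriesField{\RsetAlt}{\thegroup{E}{\RsetAlt}}$-probability can be written down explicitly. First I would reduce to the case where $E$ is finite, or at least generated by its finite subalgebras: by the direct limit property for Boolean algebras, $E=\directlim E_i$ where the $E_i$ range over the finite subalgebras of $E$ ordered by inclusion, and each finite $E_i$ carries a canonical generating partition $\thepartof{E_i}$. For a single finite Boolean algebra with generating partition of size $n$, the naive choice of mass $p(x)=1/n$ already gives a strictly positive $\RsetAlt$-probability, so the finite case needs no field extension at all; the whole difficulty is in choosing the masses \emph{coherently across the whole directed family}, since as $i$ grows the atoms of $\thepartof{E_i}$ split, and a uniform choice at each stage is not compatible with the transition morphisms.

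The key idea I would use is to index the atoms of the global algebra by formal ``addresses'' and assign to each atom-address a distinct formal monomial $X^g$ with $g\in\thegroup{E}{\RsetAlt}$ chosen so that the required additive relations hold. Concretely, I would take $\thegroup{E}{\RsetAlt}$ to be a large enough free (or Hahn-lexicographic) ordered abelian group — e.g. the ordered abelian group $\Zset^{(\kappa)}$ of finitely supported functions from a set $\kappa$ of cardinality $\thecardof{E}$ into $\Zset$, ordered lexicographically — which is certainly non-trivial. Then I would define, by transfinite/inductive bookkeeping over the directed system of finite subalgebras, a map sending each atom $x$ of each $\thepartof{E_i}$ to a Hahn series $p_i(x)\in\theHahnSeriesField{\RsetAlt}{\thegroup{E}{\RsetAlt}_+}$ with strictly positive leading coefficient, arranged so that (i) $\sum_{x\in\thepartof{E_i}}p_i(x)=1$ and (ii) whenever an atom $x$ of $E_i$ splits into atoms $x_1,\dots,x_m$ in $E_j$, one has $p_i(x)=\sum_{k}p_j(x_k)$. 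Property (ii) is exactly the compatibility needed so that the $P_i\in\thesetofProb{\theHahnSeriesField{\RsetAlt}{\thegroup{E}{\RsetAlt}}}{E_i}$ glue to a single $P\in\thesetofProb{\theHahnSeriesField{\RsetAlt}{\thegroup{E}{\RsetAlt}}}{E}$ on the direct limit, and strict positivity of each leading coefficient gives $P>0$ in the lexicographic order, i.e. $P(x)>0$ for every $x\neq\everFalse$. The trick that makes (i) and (ii) simultaneously satisfiable is to spend ``infinitesimal room'': when $x$ splits into $m$ pieces, set $p_j(x_k)=\frac{1}{m}p_i(x)+\epsilon_k$ for fresh monomials $\epsilon_k$ of strictly higher order (smaller in the order) than anything used so far, with $\sum_k\epsilon_k=0$; the higher-order corrections never disturb leading terms, so positivity propagates, and one never runs out of fresh monomials because $\thegroup{E}{\RsetAlt}$ was chosen large enough.

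In more detail, the steps in order: (1) invoke the direct-limit presentation $E=\directlim E_i$ over finite subalgebras and note each $E_i$ has generating partition $\thepartof{E_i}$; (2) fix $\thegroup{E}{\RsetAlt}$ as a free lexicographically ordered abelian group of rank $\thecardof{E}$, and record that it is non-trivial and that $\theHahnSeriesField{\RsetAlt}{\thegroup{E}{\RsetAlt}}$ is an ordered field by the property quoted above; (3) well-order the set of ``splitting events'' in the directed system and, processing them in order, assign masses $p_i$ to atoms by the ``$\frac1m$ plus fresh infinitesimals summing to zero'' rule, maintaining the invariant that every assigned mass is a Hahn series with strictly positive leading coefficient and that the masses at each finite stage sum to $1$; (4) check the compatibility $\themapping{i,j}$-equation so that the property on probabilities on a finite Boolean algebra yields $P_i\in\thesetofProb{\theHahnSeriesField{\RsetAlt}{\thegroup{E}{\RsetAlt}}}{E_i}$ and these cohere into $P$ on $E$; (5) conclude $P>0$, hence $E$ is $\theHahnSeriesField{\RsetAlt}{\thegroup{E}{\RsetAlt}}$-tangible. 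The main obstacle I anticipate is the bookkeeping in step (3): making the ``fresh infinitesimal'' choices genuinely coherent across the \emph{whole} directed system (not just along a single chain), so that the partial masses $p_i$ are a well-defined function of the atom and not of the path by which the atom was reached — this is where one must be careful that the support sets stay well-ordered and that the group is large enough to supply distinct generators for every split; once the bookkeeping is set up correctly, steps (4) and (5) are routine given the Boolean-algebra and Hahn-series properties already established.
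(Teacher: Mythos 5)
Your route is genuinely different from the paper's, and unfortunately the step you yourself flag as ``the main obstacle'' is a real gap, not bookkeeping. The family of finite subalgebras of $E$ is directed but is not a chain (and for uncountable $E$ it contains no cofinal chain, since a chain of finite subalgebras is countable), so your masses must be assigned consistently across \emph{incomparable} refinements of the same atom, and the ``process splitting events in a well-order, adding fresh infinitesimals summing to zero'' rule does not survive this. Concretely: suppose an atom $x$ of $E_i$ splits as $\{x_1,x_2\}$ in $E_j$ and as $\{y_1,y_2\}$ in an incomparable $E_k$, with $x_1\cap y_2=\everFalse$ but the other three intersections nonempty in some common refinement $E_l$. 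Whatever values $p_j(x_1),p_k(y_1)$ you committed to before seeing $E_l$, the constraints in $E_l$ force $p_k(y_1)=p_j(x_1)+p_l(x_2\cap y_1)>p_j(x_1)$; so any rule (such as your $\tfrac1m p_i(x)+\epsilon_k$ rule) that fixes $p_j(x_1)$ and $p_k(y_1)$ from local data at $E_j$ and $E_k$ alone, without knowing which future intersections vanish, will be contradicted. Making the choices ``genuinely coherent across the whole directed system'' therefore requires global information about $E$ up front, and your sketch gives no mechanism for obtaining it. (A secondary quibble: $m\ge2$ fresh \emph{monomials} cannot sum to zero, so the $\epsilon_k$ would have to be signed combinations, which further complicates the positivity invariant.)

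The paper avoids all of this with a short global construction: by Stone's representation theorem realize $E$ as a subalgebra of $2^{\everTrue}$ for a set $\everTrue$, choose a non-trivial ordered abelian group $G$ with $\thecardof{G}\ge\thecardof{\everTrue}$ (e.g.\ $\Zset^{\everTrue}$ with a lexicographic order induced by a well-ordering of $\everTrue$) and an injection $\sigma:\everTrue\to G_+$ with well-ordered image, and set
\begin{equation*}
P(y)=\Bigl(\sum_{\omega\in\everTrue}X^{\sigma(\omega)}\Bigr)^{-1}\sum_{\omega\in y}X^{\sigma(\omega)}\;.
\end{equation*}
Each point of $\everTrue$ carries its own monomial, so additivity and coherence are automatic and $P(y)>0$ whenever $y\ne\everFalse$ because $y$ contains at least one point. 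In effect the paper assigns the ``infinitesimal addresses'' to the points of the Stone space rather than to the atoms of finite subalgebras, which is exactly the global, path-independent assignment your plan is missing. If you want to salvage your approach, you would need either to restrict to countable $E$ (where a cofinal chain $E_0\subset E_1\subset\cdots$ exists and your inductive rule does work), or to replace step (3) by a transfinite one-element-at-a-time measure-extension argument, which is substantially more work than the paper's two-line formula.
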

Proof of main lemma is done in appendix~\ref{appendix:proof:mainlemma:1}.
\begin{corollary}\label{maincor:1}
For all $P\in\thesetofProb{\RsetAlt}{E}$\,,
there is $Q\in\thesetofProb{\theHahnSeriesField{\RsetAlt}{\thegroup{E}{\RsetAlt}}}{E}$ such that $Q>0$ and $P=\projFromHahn{\RsetAlt}{\thegroup{\RsetAlt}{E}}Q$
\end{corollary}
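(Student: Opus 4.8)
The plan is to derive Corollary~\ref{maincor:1} from Main lemma~\ref{mainlemma:1} by perturbing $P$ infinitesimally by a strictly positive distribution living on the Hahn extension. Write $\RsetAlt' = \theHahnSeriesField{\RsetAlt}{\thegroup{E}{\RsetAlt}}$ for brevity. First I would invoke Main lemma~\ref{mainlemma:1}, which furnishes the non-trivial ordered abelian group $\thegroup{E}{\RsetAlt}$ and, by $\RsetAlt'$-tangibility of $E$, a strictly positive $R \in \thesetofProb{\RsetAlt'}{E}$ with $R > 0$. The canonical map $\RsetAlt \hookrightarrow \RsetAlt'$, $a \mapsto a X^0$, is an order-preserving injective ring morphism sending $0,1$ to $0,1$, so composing $P$ with it yields a distribution of $\thesetofProb{\RsetAlt'}{E}$ --- still written $P$ --- whose values lie in $\RsetAlt X^0$.

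Using non-triviality of $\thegroup{E}{\RsetAlt}$ I would pick $g \in \thegroup{E}{\RsetAlt}$ with $g > 0$ and set $\varepsilon = X^g$, so that $\varepsilon > 0$ and $a - \varepsilon > 0$ for every $a \in \RsetAlt$ with $a > 0$ (the least element of the support of $a - X^g$ is $0$, with positive coefficient $a$); in particular $0 < \varepsilon$ and $0 < 1 - \varepsilon$. Then I define
\[
Q = (1 - \varepsilon)\,P + \varepsilon\,R \; .
\]
That $Q \in \thesetofProb{\RsetAlt'}{E}$ is routine: $Q(\everFalse) = 0$, $Q(\everTrue) = (1-\varepsilon) + \varepsilon = 1$, the modular identity $Q(x \cap y) + Q(x \cup y) = Q(x) + Q(y)$ follows by $\RsetAlt'$-linearity from those of $P$ and $R$, and $Q \geq 0$ because $1 - \varepsilon$, $\varepsilon$, $P$, $R$ are all $\geq 0$ and the non-negative elements of an ordered field are closed under sums and products. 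Strict positivity is the one place using $R > 0$ and $\varepsilon > 0$: for $x \neq \everFalse$ one has $Q(x) - \varepsilon R(x) = (1-\varepsilon) P(x) \geq 0$ while $\varepsilon R(x) > 0$, hence $Q(x) > 0$; thus $Q > 0$.

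It remains to show $\projFromHahn{\RsetAlt}{\thegroupAlone} Q = P$. Since $Q \in \thesetofProb{\RsetAlt'}{E}$, the corollary stated just after Property~\ref{pos:cone:prop:1} already makes $\projFromHahn{\RsetAlt}{\thegroupAlone} Q$ a well-defined element of $\thesetofProb{\RsetAlt}{E}$: indeed a value $f$ of a $\RsetAlt'$-probability satisfies $0 \leq f \leq 1$, which forces the least exponent in the support of $f$ to be non-negative (a negative leading exponent would make $f < 0$ or $f > 1$), so $f$ lies in the domain of the ring morphism of Property~\ref{pos:cone:prop:1} and that morphism applies to each $Q(x)$. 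Applying it to the defining formula of $Q$, with $\projFromHahn{\RsetAlt}{\thegroupAlone} X^g = 0$ (as $g > 0$) and $\projFromHahn{\RsetAlt}{\thegroupAlone}(a X^0) = a$ for $a \in \RsetAlt$, gives for every $x \in E$
\[
\projFromHahn{\RsetAlt}{\thegroupAlone} Q(x) = (1 - 0)\,P(x) + 0 \cdot \projFromHahn{\RsetAlt}{\thegroupAlone} R(x) = P(x) \; ,
\]
that is $\projFromHahn{\RsetAlt}{\thegroupAlone} Q = P$, finishing the proof. I do not anticipate a genuine obstacle here, Main lemma~\ref{mainlemma:1} carrying the real weight; the only slightly delicate step is the bookkeeping that $Q$ stays supported on the positive cone of $\thegroup{E}{\RsetAlt}$, so that the projection of Property~\ref{pos:cone:prop:1} legitimately applies to it.
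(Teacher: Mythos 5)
Your construction $Q=(1-X^g)P+X^gR$ with $R>0$ supplied by Main lemma~\ref{mainlemma:1} and $g>0$ is exactly the paper's own proof, which states this formula without further elaboration. Your additional verifications (strict positivity, the modular identity, and the bookkeeping that the values of $Q$ stay in $\theHahnSeriesField{\RsetAlt}{\thegroupAlone_+}$ so that the projection of Property~\ref{pos:cone:prop:1} applies) are correct and simply fill in details the paper leaves implicit.
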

\begin{proof}
Let $R\in\thesetofProb{\theHahnSeriesField{\RsetAlt}{\thegroup{E}{\RsetAlt}}}{E}$ such that $R>0$\,, and $g\in\thegroup{E}{\RsetAlt}$ such that $g>0$\,.
Then set $Q=(1-X^g)P+X^gR$\,.
\end{proof}
\subsection{Bayesian algebra}\label{sect:bayes:model}
\subsubsection{Partial Bayesian algebra}
In this section, a recursive construction of partial Bayesian algebras is done, starting from a finite Boolean algebra $E_0$.
Then a Bayesian algabra extending $E_0$ is deduced as the direct limit of the partial constructions.
%
\\[4pt]
The following notations will be instrumental:
\begin{notation}[cropping]\label{definitionOfCropping:1}
Let $F$ be a subset of $E$, and $x,y\in E$\,.
The cropping of $F$ by $x$ is the set:
$$\thecropof{F}{x}=\left\{y\in F\;/\;y\subset x\right\}\,.$$
The cropping of $F$ by a pair $(x,y)$ is the product set:
$$\thecropof{F}{x,y}=\thecropof{F}{x}\times\thecropof{F}{y}\,.$$
\end{notation}
\begin{notation}
For all $(\omega,\upsilon)$\,, it is defined 
$\thetransp{}(\omega,\upsilon)=(\upsilon,\omega)$\,.
For all set of pairs $x$,
it is defined $\thetransp{}(x)=\{(\upsilon,\omega)\;/\;(\omega,\upsilon)\in x\}$ and $(\theid{}\cup\thetransp{})(x)=x\cup\thetransp{}(x)$\,.
\end{notation}
\paragraph{Initial construction.}
It is defined a \emph{finite} Boolean algebra $(E_0,\cap,\cup,\sim_0,\emptyset,\everTrue_0)$\,,
where $\everTrue_0$ is a set and $E_0$ is a set of subsets of $\everTrue_0$.
Then, $\cap,\cup,\emptyset$ are respectively the set intersection, union and the empty set, and $\sim_0$ is the set complement defined by $\sim_0 x=\everTrue_0\setminus x$ for all $x\in E_0$.
It is defined $\themapping{0,0}=\theid{E_0}$\,.
It is defined $D_0=\emptyset$\,, $\varphi_0:D_0\rightarrow E_0$ (trivially empty)\,,
$D_{0}(y)=\emptyset$ for all $y\in E_{0}$\,.
It is defined $u_0=u(0,\cdot):j\in\Nset\mapsto u(0,j)\in E_0$\,, a surjective mapping.
\paragraph{Inductive construction.}
Assume that $(E_i,\cap,\cup,\sim_i,\emptyset,\everTrue_i)$, $D_i$, $\themapping{i,j}$, $u_i=u(i,\cdot)$ and $\varphi_i:D_i\rightarrow E_i$ are constructed for all $i\le j\le n$\,.
Assume that $E_i$ is a finite set for all $i\le n$\,.
Let $b_n=\themapping{c_\gamma(n),n}\circ u\circ\gamma^{-1}(n)\in E_n$\,.
Then, it is defined by a case dependent induction:
\begin{definition}
\label{def:case:0}
If $b_n\in\{\emptyset,\everTrue_n\}$\,, then:
\begin{itemize}
\item Set $E_{n+1}=E_n$\,, $\everTrue_{n+1}=\everTrue_{n}$ and $\sim_{n+1}=\sim_n$\,,
\item Set $\themapping{i,n+1}=\themapping{i,n}$ for all $i\le n$, and $\themapping{n+1,n+1}=\theid{E_{n+1}}$\,,
\item Define $D_{n+1}$ and $\varphi_{n+1}$ by:
\begin{itemize}
\item $D_{n+1}=D_{n}\cup\{(x,\emptyset)/x\in D_{n+1}\}\cup\{(x,\everTrue_{n+1})/x\in D_{n+1}\}$\,,
\item $\varphi_{n+1}(x,y)=\varphi_{n}(x,y)$ for $(x,y)\in D_{n}$\,,
\item Otherwise, $\varphi_{n+1}(x,\emptyset)=\varphi_{n+1}(x,\everTrue_{n+1})=x$ for $x\in E_{n+1}$\,.
\end{itemize}
\item For all $y\in E_{n+1}$\,, define $D_{n+1}(y)=\{x\in E_{n+1}\;/\; (x,y)\in D_{n+1}\}$\,.
\end{itemize}
\end{definition}
\begin{definition}
\label{def:case:1}
If $b_n\not\in\{\emptyset,\everTrue_n\}$\,, then:
\begin{itemize}
\item Define:
$$
\begin{array}{@{}l@{}}\displaystyle
Z_{n+1}=\Bigl\{(x,y)\in \thecropof{E_n}{\sim_n b_n}\times \thecropof{E_n}{b_n} \;\Big/\;(x,\sim_n b_n),(y,b_n)\in D_n
\vspace{5pt}\\\displaystyle
\rien\qquad\qquad\qquad\qquad\qquad\qquad\mbox{ \ and \ }x\cap\varphi_n(y,b_n)=y\cap\varphi_n(x,\sim_n b_n)=\emptyset\Bigr\}
\;,
\end{array}
$$
\item Define $\displaystyle{\preeverFalse}_{n+1}=\bigcup_{(x,y)\in Z_{n+1}
}(\theid{}\cup\thetransp{})(x\times y)$\,,
\item Define $E_{n+1}=\bigcup_{k\in\Nset}E_{n+1}^k$ and $E_{n+1}^k$ by: $$\rien\hspace{-15pt}
E_{n+1}^k=\Biggl\{\bigcup_{i=1}^k\Bigl((x_i\times y_i)\setminus {\preeverFalse}_{n+1}\Bigr)\;\Bigg/\;
\forall i,\;
(x_i,y_i)\in 
(\theid{}\cup\thetransp{})(\thecropof{E_n}{b_n}\times \thecropof{E_n}{\sim_nb_n})
\Biggr\}\;,$$
\item Define ${\everTrue}_{n+1}=
(\theid{}\cup\thetransp{})(b_n\times \sim_n b_n)
\setminus {\preeverFalse}_{n+1}
\,,$ 
and $\sim_{n+1}x={\everTrue}_{n+1}\setminus x$ for all $x\in E_{n+1}$\,,
\item Set $\themapping{i,n+1}(x)=\biggl(\Bigl(\Bigl(\themapping{i,n}(x)\cap b_n\Bigr)\times \sim_n b_n\Bigr)\cup\Bigl(\Bigl(\themapping{i,n}(x)\cap \sim_n b_n\Bigr)\times b_n\Bigr)\biggr)\setminus{\preeverFalse}_{n+1}$ for all $i\le n$ and $x\in E_i$\,; set $\themapping{n+1,n+1}=\theid{E_{n+1}}$\,,
\item Define  $D_{n+1}$ and $\varphi_{n+1}$ by:
\begin{itemize}
\item $\displaystyle D_{n+1}=\thejointmapping{n,n+1}(D_{n})\cup\bigcup_{x\in E_{n+1}}\Bigl\{(x,b_{n:n+1}),(x,\sim_{n+1}b_{n:n+1})\Bigr\}$\,,
\item $\varphi_{n+1}\Bigl(\thejointmapping{n,n+1}(x,y)\Bigr)=\themapping{n,n+1}\circ\varphi_{n}(x,y)$ for $(x,y)\in D_{n}$\,,
\item $\varphi_{n+1}(x,b_{n:n+1})=(\theid{}\cup \thetransp{})
(x\cap b_{n:n+1})
$ 
for all $x\in E_{n+1}$ such that $(x,b_{n:n+1})\not\in \thejointmapping{n,n+1}(D_{n})$\,,
\item $\varphi_{n+1}(x,\sim_{n+1} b_{n:n+1})=(\theid{}\cup \thetransp{})
(x\cap \sim_{n+1}b_{n:n+1})
$ 
for all $x\in E_{n+1}$ such that $(x,\sim_{n+1} b_{n:n+1})\not\in \thejointmapping{n,n+1}(D_{n})$\,,
\end{itemize}
where $b_{n:n+1}=\themapping{n,n+1}(b_n)=(b_n\times \sim_n b_n)\setminus\preeverFalse_{n+1}$\,.
\item For all $y\in E_{n+1}$\,, define $D_{n+1}(y)=\{x\in E_{n+1}\;/\; (x,y)\in D_{n+1}\}$\,.
\end{itemize}
\end{definition}
It is interesting here to explain this construction, which is related to the properties of conditional probabilitilies.
The function $(x,y)\mapsto\varphi_n(y,x)$ is a partial implementation of the conditioning $(x,y)\mapsto[x]y$\,.
The definition~\ref{def:case:0} implements the assumptions $[\everFalse]x=[\everTrue]x= x$, which expresses the independence of any proposition $x$ with the ever-true proposition $\everTrue$ and the ever-false proposition $\everFalse$\,.
From a probabilistic point of view, these assumptions are related to the relations $P(x|\everTrue)=P(x)$ and $P(x|\everFalse)=P(x)$\,.
The first relation is an obvious consequence of the definition of conditional probabilities, but the second is a choice of a solution for equation $P(x|\emptyset)P(\emptyset)=P(x\cap\emptyset)$\,.
Actually, such a choice is related to \emph{the choice of a symmetrization of the independence relation in regards to the negation.}
\\[4pt]
In definition~\ref{def:case:1}, the elements $x\times y$ and $y\times x$, defined for $(x,y)\in \thecropof{E_n}{b_n}\times \thecropof{E_n}{\sim b_n}$\,, are implementations of $x\cap[\sim b_n]y$ and of $y\cap[b_n]x$\,.
Now, the definition of $\themapping{n,n+1}:E_n\rightarrow E_{n+1}$ is a recursive implementation of the trivial equation:
$$
x=\mbox{\small$\displaystyle(x\cap b_n)\cup(x\cap \sim b_n)=$}\bigl((x\cap b_n)\cap[\sim b_n]\sim b_n\bigr)\cup\bigl((x\cap \sim b_n)\cap[b_n]b_n\bigr)\;,
$$
which is deduced from the assumptions $[b_n]b_n=[\sim b_n]\sim b_n=\everTrue$ for $b_n\not\in\{\everFalse,\everTrue\}\,;$
from a probabilistic point of view, these assumptions are themselve related to the relations $P(b_n|b_n)=P(\sim b_n|\sim b_n)=1$\,.
The definition of $\varphi_{n+1}(x,b_{n:n+1})$ is more complex and implements the following resulting equation:
$$\begin{array}{@{}l@{}}
[b_n]\Bigl(\bigl(b_n\cap x\cap[\sim b_n] y\bigr)
\cup
\bigl(\sim b_n\cap x'\cap[b_n] y'\bigr)\Bigr)
= \mbox{\small$\displaystyle
[b_n]x\cap[b_n][\sim b_n] y
=
[b_n]x\cap[\sim b_n] y$}
\vspace{4pt}\\
\phantom{
\Bigl(\bigl(b_n\cap x\cap
y\bigr)
\cup
\bigl(\sim b_n\cap x'\cap[b_n] y'\bigr)\Bigr)}
=
\mbox{\small$\displaystyle
\bigl(b_n\cap[b_n]x\cap[\sim b_n] y\bigr)
\cup
\bigl(\sim b_n\cap[b_n]x\cap[\sim b_n] y\bigr)
$}
\vspace{4pt}\\
\phantom{
\Bigl(\bigl(b_n\cap x\cap
y\bigr)
\cup
\bigl(\sim b_n\cap x'\cap[b_n] y'\bigr)\Bigr)}
=
\bigl(b_n\cap x\cap[\sim b_n] y\bigr)
\cup
\bigl(\sim b_n\cap y\cap[b_n]x\bigr)
\;.
\end{array}$$
This deduction is based on some characteristic assumptions, which are that $x\mapsto[b_n]x$ is a Boolean morphism, $b_n\cap[b_n]x=b_n\cap x$ and $[b_n][\sim b_n]x=x$.
The first two assumptions come rather naturally from a probabilistic point of view.
In particular, the definition of the conditional probability, $P(b_n)P\bigl([b_n]x\bigr)=P(b_n\cap x)$\,, leads to the assumptions that $b_n\cap[b_n]x=b_n\cap x$ and that $[b_n]x$ is independent of $b_n$.
Then, the assumption $[b_n][\sim b_n]x=x$ is a consequence of \emph{the choice of a symmetrization of the independence relation in regards to the negation.} 
\\[4pt]
In construction of definition~\ref{def:case:1}, one has to take into account algebraic relations implied from previous constructions, and especially exclusions like $x\cap[\sim_n b_n]y=\everFalse$ or $y\cap[b_n]x=\everFalse$\,.
The set $\preeverFalse_{n+1}$ is a compilation of such exclusions, and has to be removed from the constructed implementation. 
\paragraph{Properties.}
The following properties are derived with the perspective of constructing a Bayesian extension of $E_0$ as a direct limit.
\begin{property}
$E_{n+1}$ is finite.
\end{property}
\begin{proof}
Immediate induction from the definition.
\end{proof}
\begin{property}
Let $i\in\Nset$ and $x\in E_i$\,.
Then $D_i(x)=D_i(\sim_ix)$
and:
$$
\sim_iy
\;,\ 
y\cup z
\;,\ 
\varphi_{i}(y,x)
\;,\ 
\varphi_{i}(y,\sim_ix)
\in D_i(x)\;,
$$
for all $y,z\in D_i(x)$\,.
\end{property} 
\begin{proof}
Immediate induction from the definition.
\end{proof}
\begin{property}[transitive mapping]$\themapping{j,k}\circ\themapping{i,j}=\themapping{i,k}$ for all $i,j,k\in\Nset$ such that $i\le j\le k$\,.
\end{property}
\begin{proof}
True for $k=0$.
\\
Now, assume the property for $k\le n$\,.
\\
By definition, it is clear that $\themapping{n+1,n+1}\circ\themapping{i,n+1}=\themapping{i,n+1}$\,.
\\
Now assume $j\le n$\,.
Since $\themapping{j,n}(\themapping{i,j}(x))=\themapping{i,n}(x)$\,, it follows from the definition of $\themapping{i,n+1}$ that $\themapping{j,n+1}(\themapping{i,j}(x))=\themapping{i,n+1}(x)$\,.
\end{proof}
\begin{lemma}\label{lemma:1}Let $A,B,C,D$ be sets. Then:
\begin{itemize}
\item $A\cap D=B\cap C=\emptyset$ implies $(A\cup C)\cap(B\cup D)=(A\cap B)\cup(C\cap D)$\,,
\item $A\subset B$\,, $C\subset D$ and $B\cap D=\emptyset$ imply $(B\cup D)\setminus(A\cup C)=(B\setminus A)\cup (D\setminus C)$\,.
\end{itemize}
\end{lemma}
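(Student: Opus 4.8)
The plan is to prove both identities by elementary set manipulations; neither needs anything beyond distributivity of $\cap$ over $\cup$ and the hypothesised disjointnesses, so the whole lemma is essentially bookkeeping.

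For the first identity, I would expand the left-hand side by distributivity:
\[
(A\cup C)\cap(B\cup D)=(A\cap B)\cup(A\cap D)\cup(C\cap B)\cup(C\cap D)\,.
\]
The hypotheses $A\cap D=\emptyset$ and $B\cap C=\emptyset$ delete the two mixed terms $A\cap D$ and $C\cap B$, leaving exactly $(A\cap B)\cup(C\cap D)$; that is the whole argument.

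For the second identity, I would first observe that the hypotheses force the auxiliary disjointnesses $A\cap D=\emptyset$ (since $A\subset B$ and $B\cap D=\emptyset$) and $B\cap C=\emptyset$ (since $C\subset D$ and $B\cap D=\emptyset$), and that $B\cap D=\emptyset$ makes membership in $B$ and membership in $D$ mutually exclusive. Then I would argue by element-chasing: if $\omega\in(B\cup D)\setminus(A\cup C)$ and, say, $\omega\in B$, then $\omega\notin D\supset C$, so the constraint $\omega\notin C$ is automatic and $\omega\in B\setminus A$; symmetrically, $\omega\in D$ gives $\omega\in D\setminus C$, which yields the inclusion from left to right. For the reverse inclusion, $B\setminus A$ is contained in $B\cup D$, is disjoint from $A$, and is disjoint from $C$ because $B\cap C=\emptyset$, hence $B\setminus A\subset(B\cup D)\setminus(A\cup C)$, and symmetrically for $D\setminus C$. (Equivalently, one can compute with complements: $(B\cup D)\setminus(A\cup C)=(B\cap\sim A\cap\sim C)\cup(D\cap\sim A\cap\sim C)$, then simplify using $B\cap\sim C=B$ and $D\cap\sim A=D$.) The only step that requires a moment's attention — the closest thing to an obstacle — is extracting those auxiliary disjointnesses from the hypotheses; once they are in hand, both inclusions are routine.
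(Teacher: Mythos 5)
Your proof is correct and complete: the first identity follows from distributivity plus deletion of the two mixed terms, and for the second you correctly extract the auxiliary disjointnesses $A\cap D=\emptyset$ and $B\cap C=\emptyset$ from the hypotheses before the two inclusions. The paper itself states Lemma~\ref{lemma:1} without any proof, treating it as elementary bookkeeping, so there is no argument to compare against; your write-up simply supplies the routine verification the author omitted.
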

\begin{property}[Boolean morphism]
Let $i,j\in\Nset$ such that $i\le j$\,.
Let $x,y\in E_i$\,.
Then $\themapping{i,j}(x\cap y)=\themapping{i,j}(x)\cap\themapping{i,j}(y)$\,,
$\themapping{i,j}(\everTrue_i)=\everTrue_j$ and $\themapping{i,j}(\sim_i x)=\sim_j\themapping{i,j}(y)$\,.
As a consequence, $\themapping{i,j}$ is a Boolean morphism.
\end{property}
\begin{proof}
The properties are obviously true for $j=0$\,.
\\
Assume the properties for $j\le n$\,.
Then:
$$\begin{array}{@{}l@{}}\displaystyle
\themapping{i,n+1}(x_i\cap y_i)=\themapping{n,n+1}\circ\themapping{i,n}(x_i\cap y_i)=\themapping{n,n+1}\bigl(\themapping{i,n}(x_i)\cap \themapping{i,n}(y_i)\bigr)\;,
\vspace{4pt}\\\displaystyle
\themapping{i,n+1}(\everTrue_i)=\themapping{n,n+1}\circ\themapping{i,n}(\everTrue_i)=\themapping{n,n+1}(\everTrue_n)\;,
\vspace{4pt}\\\displaystyle
\themapping{i,n+1}(\sim_i x_i)=\themapping{n,n+1}\circ\themapping{i,n}(\sim_i x_i)=\themapping{n,n+1}\bigl(\everTrue_n\setminus\themapping{i,n}(x_i)\bigr)\;,
\end{array}$$
for all $x_i, y_i\in E_i$\,.
Now, let consider the only difficult case, that is $b_n\not\in\{\everFalse,\everTrue_n\}$\,.
\\
It is recalled that $\themapping{n,n+1}(y_n)=\biggl(\Bigl(\Bigl(y_n\cap b_n\Bigr)\times \sim_n b_n\Bigr)\cup\Bigl(\Bigl(y_n\cap \sim_n b_n\Bigr)\times b_n\Bigr)\biggr)\setminus{\preeverFalse}_{n+1}$ for all $y_n\in E_n$\,.
It is first deduced $\themapping{i,n+1}(\everTrue_i)=\everTrue_{n+1}$ from the definition of $\everTrue_{n+1}$\,.
Now from $b_n\cap\sim_nb_n=\emptyset$ and lemma \ref{lemma:1}, it is deduced $\themapping{i,n+1}(\sim_i x_i)=\sim_{n+1}\themapping{i,n+1}(x_i)$ and $\themapping{i,n+1}(x_i\cap y_i)=\themapping{i,n+1}(x_i)\cap\themapping{i,n+1}(y_i)$ for all $x_i, y_i\in E_i$\,.
\end{proof}
\begin{property}[commutation]\label{prop:def:commutation}
$\themapping{i,j}\circ\varphi_i=\varphi_j\circ \thejointmapping{i,j}$ for all $i,j\in\Nset$ such that $i\le j$\,.
\end{property}
\begin{proof}An immediate consequence of the definition.\end{proof}
\begin{lemma}\label{prop:def:varphi:coherence}
Let $i\in\Nset$ such that $b_i\not\in\{\emptyset,\everTrue_i\}$\,.
Then, for all $x,y\in E_{i+1}$\,, it is proved:
\begin{enumerate}[(a)]
\item \label{prop:def:varphi:coherence:a}$\varphi_{i+1}(x,b_{i:i+1})=(\theid{}\cup \thetransp{})(x\cap b_{i:i+1})$\,,
\item \label{prop:def:varphi:coherence:b}$\varphi_{i+1}(x,\sim_{i+1} b_{i:i+1})=(\theid{}\cup \thetransp{})(x\cap \sim_{i+1} b_{i:i+1})$\,,
\item \label{prop:def:varphi:coherence:ba}$\varphi_{i+1}(b_{i:i+1},b_{i:i+1})=\everTrue_{i+1}$\,,
\item \label{prop:def:varphi:coherence:bb}$\varphi_{i+1}(\sim_{i+1}b_{i:i+1},\sim_{i+1}b_{i:i+1})=\everTrue_{n+1}$\,,
\item \label{prop:def:varphi:coherence:c}$\varphi_{i+1}(x\cap y,b_{i:i+1})=\varphi_{i+1}(x,b_{i:i+1})\cap\varphi_{i+1}(y,b_{i:i+1})$\,,
\item \label{prop:def:varphi:coherence:cb}$\varphi_{i+1}(x\cap y,\sim_{i+1}b_{i:i+1})=\varphi_{i+1}(x,\sim_{i+1}b_{i:i+1})\cap\varphi_{i+1}(y,\sim_{i+1}b_{i:i+1})$\,,
\item \label{prop:def:varphi:coherence:d}$\varphi_{i+1}(\sim_{i+1}x,b_{i:i+1})=\sim_{i+1}\varphi_{i+1}(x,b_{i:i+1})$\,,
\item \label{prop:def:varphi:coherence:db}$\varphi_{i+1}(\sim_{i+1}x,\sim_{i+1}b_{i:i+1})=\sim_{i+1}\varphi_{i+1}(x,\sim_{i+1}b_{i:i+1})$\,,
\item \label{prop:def:varphi:coherence:e}$b_{i:i+1}\cap\varphi_{i+1}(x,b_{i:i+1})=x\cap b_{i:i+1}$\,,
\item \label{prop:def:varphi:coherence:f}$\sim_{i+1}b_{i:i+1}\cap\varphi_{i+1}(x,\sim_{i+1}b_{i:i+1})=x\cap \sim_{i+1}b_{i:i+1}$\,,
\item \label{prop:def:varphi:coherence:g}$\displaystyle
\varphi_{i+1}(\varphi_{i+1}(x,b_{i:i+1}),b_{i:i+1})=\varphi_{i+1}(\varphi_{i+1}(x,b_{i:i+1}),\sim_{i+1}b_{i:i+1})=\varphi_{i+1}(x,b_{i:i+1})\;,$
\item \label{prop:def:varphi:coherence:h}$\begin{array}{l@{}}\displaystyle
\varphi_{i+1}(\varphi_{i+1}(x,\sim_{i+1}b_{i:i+1}),b_{i:i+1})=
\\\displaystyle
\rien\qquad\qquad\qquad\qquad\varphi_{i+1}(\varphi_{i+1}(x,\sim_{i+1}b_{i:i+1}),\sim_{i+1}b_{i:i+1})=\varphi_{i+1}(x,\sim_{i+1}b_{i:i+1})\;.
\end{array}$
\end{enumerate}
\end{lemma}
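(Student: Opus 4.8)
The plan is to prove all twelve identities by direct computation, working entirely at the level of the set-theoretic model of $E_{i+1}$ from Definition~\ref{def:case:1}. The key bookkeeping fact is that $b_{i:i+1}=(b_i\times\sim_ib_i)\setminus\preeverFalse_{i+1}$ and $\everTrue_{i+1}=(\theid{}\cup\thetransp{})(b_i\times\sim_ib_i)\setminus\preeverFalse_{i+1}$, so that $\sim_{i+1}b_{i:i+1}=\everTrue_{i+1}\setminus b_{i:i+1}=\thetransp{}(b_i\times\sim_ib_i)\setminus\preeverFalse_{i+1}=(\sim_ib_i\times b_i)\setminus\preeverFalse_{i+1}$. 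Thus $b_{i:i+1}$ lives in the ``first half'' of $\everTrue_{i+1}$ (ordered pairs with left coordinate in $b_i$) and $\sim_{i+1}b_{i:i+1}$ in the ``second half''; in particular $b_{i:i+1}$ and $\sim_{i+1}b_{i:i+1}$ are disjoint and cover $\everTrue_{i+1}$. I would first record this decomposition as a preliminary observation, together with the remark that for any $x\in E_{i+1}$ the set $x\cap b_{i:i+1}$ is a subset of the first half and $x\cap\sim_{i+1}b_{i:i+1}$ a subset of the second half, so that $(\theid{}\cup\thetransp{})$ applied to either of these is simply its symmetrization and the two symmetrizations are again disjoint with union inside $\everTrue_{i+1}$.

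\textbf{Items (a)--(b) and (e)--(f).} Items \ref{prop:def:varphi:coherence:a} and \ref{prop:def:varphi:coherence:b} are essentially the defining clauses of $\varphi_{i+1}$ on the new pairs; the only thing to check is that when $(x,b_{i:i+1})$ happens to lie in $\thejointmapping{i,i+1}(D_i)$ (the case governed instead by the clause $\varphi_{i+1}(\thejointmapping{i,i+1}(x,y))=\themapping{i,i+1}\circ\varphi_i(x,y)$), the two prescriptions agree. I would handle this by noting that such an $x$ is of the form $\themapping{i,i+1}(x_i)$ with $(x_i,b_i)\in D_i$, and then use Property~\ref{prop:def:commutation} together with the inductively available facts about $\varphi_i$ on $D_i$ (specifically $b_i\cap\varphi_i(x_i,b_i)=x_i\cap b_i$, which one can carry as part of the induction hypothesis or read off from the construction) to show both sides equal $(\theid{}\cup\thetransp{})(x\cap b_{i:i+1})$. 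Once \ref{prop:def:varphi:coherence:a} holds, \ref{prop:def:varphi:coherence:e} follows by intersecting with $b_{i:i+1}$: since $b_{i:i+1}$ is in the first half, $b_{i:i+1}\cap(\theid{}\cup\thetransp{})(x\cap b_{i:i+1})=b_{i:i+1}\cap(x\cap b_{i:i+1})=x\cap b_{i:i+1}$, the transposed part contributing nothing because it lies in the second half. Items \ref{prop:def:varphi:coherence:b} and \ref{prop:def:varphi:coherence:f} are the mirror statements, proved identically with the roles of the two halves swapped.

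\textbf{Items (c)--(d), (ba)--(bb).} These say that $x\mapsto\varphi_{i+1}(x,b_{i:i+1})$ and $x\mapsto\varphi_{i+1}(x,\sim_{i+1}b_{i:i+1})$ are Boolean morphisms fixing the relevant top element. Using \ref{prop:def:varphi:coherence:a}, morphism-hood reduces to the claim that $x\mapsto(\theid{}\cup\thetransp{})(x\cap b_{i:i+1})$ commutes with $\cap$ and with complementation relative to $\everTrue_{i+1}$. Commutation with $\cap$ is immediate since $(x\cap b_{i:i+1})\cap(y\cap b_{i:i+1})=(x\cap y)\cap b_{i:i+1}$ and $(\theid{}\cup\thetransp{})$ distributes over $\cap$ of sets that both sit in the first half (here I would invoke the first clause of Lemma~\ref{lemma:1} with the two halves as the disjoint pieces, to see that the symmetrizations intersect correctly). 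For complementation: $\sim_{i+1}x\cap b_{i:i+1}=b_{i:i+1}\setminus(x\cap b_{i:i+1})$, and applying $(\theid{}\cup\thetransp{})$ to this equals $\everTrue_{i+1}\setminus(\theid{}\cup\thetransp{})(x\cap b_{i:i+1})$ by the second clause of Lemma~\ref{lemma:1} (the disjoint pair being the two halves, $A=x\cap b_{i:i+1}\subset b_{i:i+1}=B$ and $C=\thetransp{}(x\cap b_{i:i+1})\subset\thetransp{}(b_{i:i+1})=D$), which is exactly $\sim_{i+1}\varphi_{i+1}(x,b_{i:i+1})$, giving \ref{prop:def:varphi:coherence:d}. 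Taking $x=b_{i:i+1}$ in \ref{prop:def:varphi:coherence:a} gives $\varphi_{i+1}(b_{i:i+1},b_{i:i+1})=(\theid{}\cup\thetransp{})(b_{i:i+1})=\everTrue_{i+1}$, which is \ref{prop:def:varphi:coherence:ba}; item \ref{prop:def:varphi:coherence:bb} is the mirror, and \ref{prop:def:varphi:coherence:cb},\ref{prop:def:varphi:coherence:db} are the mirrors of \ref{prop:def:varphi:coherence:c},\ref{prop:def:varphi:coherence:d}.

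\textbf{Items (g)--(h), and the main obstacle.} For \ref{prop:def:varphi:coherence:g} I would first observe, using \ref{prop:def:varphi:coherence:a}, that $w:=\varphi_{i+1}(x,b_{i:i+1})=(\theid{}\cup\thetransp{})(x\cap b_{i:i+1})$ is a \emph{symmetric} subset of $\everTrue_{i+1}$, i.e.\ $\thetransp{}(w)=w$, and that it splits as $w=(w\cap b_{i:i+1})\cup(w\cap\sim_{i+1}b_{i:i+1})$ with $w\cap b_{i:i+1}=x\cap b_{i:i+1}$ and $w\cap\sim_{i+1}b_{i:i+1}=\thetransp{}(x\cap b_{i:i+1})$. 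Then $\varphi_{i+1}(w,b_{i:i+1})=(\theid{}\cup\thetransp{})(w\cap b_{i:i+1})=(\theid{}\cup\thetransp{})(x\cap b_{i:i+1})=w$, and similarly $\varphi_{i+1}(w,\sim_{i+1}b_{i:i+1})=(\theid{}\cup\thetransp{})(w\cap\sim_{i+1}b_{i:i+1})=(\theid{}\cup\thetransp{})(\thetransp{}(x\cap b_{i:i+1}))=(\theid{}\cup\thetransp{})(x\cap b_{i:i+1})=w$ by symmetry of the symmetrization operator; this yields both equalities in \ref{prop:def:varphi:coherence:g}. Item \ref{prop:def:varphi:coherence:h} is again the mirror. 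The delicate point throughout — and the step I expect to cost the most care — is the handling of the overlap between $\thejointmapping{i,i+1}(D_i)$ and the freshly adjoined pairs $\{(x,b_{i:i+1}),(x,\sim_{i+1}b_{i:i+1})\}$ in the definition of $\varphi_{i+1}$: one must verify the two defining clauses are consistent there, which forces an honest use of the induction hypothesis on $\varphi_i$ (its morphism properties and the identities $b_i\cap\varphi_i(x,b_i)=x\cap b_i$, $\varphi_i(\varphi_i(x,b_i),\cdot)=\varphi_i(x,b_i)$) pushed forward through $\themapping{i,i+1}$ via Property~\ref{prop:def:commutation}. Everything else is the routine two-halves bookkeeping above, and Lemma~\ref{lemma:1} is exactly the tool that makes the set-difference and intersection manipulations go through cleanly.
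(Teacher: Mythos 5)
Your overall architecture matches the paper's: an induction on $i$, the two-halves decomposition of $\everTrue_{i+1}$, derivation of items (ba)--(h) as corollaries of (a)--(b) by direct set computation, and mirror symmetry for the $\sim_{i+1}b_{i:i+1}$ variants. Those parts are fine. But there is a genuine gap exactly at the step you flag as "the main obstacle" and then do not carry out: the consistency of the two defining clauses of $\varphi_{i+1}$ when $(x,b_{i:i+1})\in\thejointmapping{i,i+1}(D_i)$, i.e. item (a) for $x=\themapping{i,i+1}(x_i)$ with $x_i\in D_i(b_i)$. You assert that the identity $b_i\cap\varphi_i(x_i,b_i)=x_i\cap b_i$ pushed through $\themapping{i,i+1}$ suffices to "show both sides equal $(\theid{}\cup\thetransp{})(x\cap b_{i:i+1})$". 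It does not. Writing out $\themapping{i,i+1}(\varphi_i(x_i,b_i))$, that identity only handles the first rectangle $\bigl(\varphi_i(x_i,b_i)\cap b_i\bigr)\times\sim_i b_i$, which indeed becomes $x\cap b_{i:i+1}$. The second rectangle is $\bigl(\varphi_i(x_i,b_i)\cap\sim_i b_i\bigr)\times b_i$, whereas the target $\thetransp{}(x\cap b_{i:i+1})$ is $\bigl(\sim_i b_i\times(x_i\cap b_i)\bigr)\setminus\preeverFalse_{i+1}$: these are \emph{different} rectangles (different left factor, different right factor), and they coincide only after removing $\preeverFalse_{i+1}$. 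Establishing that equality is the heart of the paper's proof and requires exhibiting specific members of $Z_{i+1}$, namely $\bigl(\varphi_i(x_i,b_i)\cap\sim_i b_i,\;\sim_i x_i\cap b_i\bigr)$ and $\bigl(\varphi_i(\sim_i x_i,b_i)\cap\sim_i b_i,\;x_i\cap b_i\bigr)$, verifying the two emptiness conditions in the definition of $Z_{i+1}$ from the induction hypothesis, and then using the resulting inclusions into $\preeverFalse_{i+1}$ to trim the second rectangle down to $\sim_i b_i\times(x_i\cap b_i)$ modulo $\preeverFalse_{i+1}$. None of your listed tools (morphism properties, Lemma~\ref{lemma:1}, Property~\ref{prop:def:commutation}) produces this without that $Z_{i+1}$ analysis.

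A secondary, related weakness: you propose to "carry $b_i\cap\varphi_i(x_i,b_i)=x_i\cap b_i$ as part of the induction hypothesis or read it off from the construction." As stated the lemma's induction hypothesis at stage $i$ only gives this identity for the specific conditioning element $b_{i-1:i}$, not for an arbitrary $b_i$ that may have been introduced at some earlier stage and transported forward. The paper handles this with a preliminary remark identifying the greatest $k<n$ with $\themapping{k,n}(b_k)\in\{b_n,\sim_n b_n\}$ and using $D_n(b_n)=\themapping{k+1,n}\bigl(D_{k+1}(b_{k:k+1})\bigr)$ to transfer properties (a)--(h) from stage $k+1$ to stage $n$. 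Without some such bookkeeping your appeal to the induction hypothesis is not available in the form you need it (and you cannot instead cite Properties~\ref{conditional:inference:property:1} or~\ref{independence:property:1}, which are themselves proved from this lemma).
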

\begin{proof}
The proofs are done by induction on $i$\,.
The results are trivial for $i=0$\,.
\\
Now assume that the results are true for $i\le n-1$\,.
\\\emph{Preliminary remark.}
Consider the greatest $k<n$ such that $\themapping{k,n}(b_k)\in\{b_{n},\sim_nb_{n}\}$\,, \emph{if its exists}\,.
Then\,:
$$
D_n(b_n)=D_n(\sim_nb_n)
= \themapping{k+1,n}\bigl(D_{k+1}(b_{k:k+1})\bigr)=\themapping{k+1,n}\bigl(D_{k+1}(\sim_{k+1}b_{k:k+1})\bigr)\;,
$$
and from induction hypothesis, the properties (\ref{prop:def:varphi:coherence:a}) to (\ref{prop:def:varphi:coherence:h}) do hold for all $x_{k+1},y_{k+1}\in D_{k+1}(b_{k:k+1})$\,.
Then, it is proved the properties for $i=n+1$ in that order:\vspace{5pt}
\\\emph{Proof of (\ref{prop:def:varphi:coherence:a}).}
The only difficult point is for $x=\themapping{n,n+1}(x_n)$ such that $x_n\in D_n(b_n)$\,.
Then:
$$
\varphi_{n+1}(x,b_{n:n+1})=\biggl(\Bigl(\bigl(\varphi_n(x_n,b_n)\cap b_n\bigr)\times \sim_n b_n\Bigr)\cup\Bigl(\bigl(\varphi_n(x_n,b_n)\cap \sim_n b_n\bigr)\times b_n\Bigr)\biggr)\setminus{\preeverFalse}_{n+1}\;.
$$
Since $\varphi_{n}(x_n,b_n)\cap b_{n}=x\cap b_{n}$ from the preliminary remark, it is deduced:
$$
\Bigl(\bigl(\varphi_n(x_n,b_n)\cap b_n\bigr)\times \sim_n b_n\Bigr)\setminus{\preeverFalse}_{n+1}=
\bigl((x_n\cap b_n)\times \sim_n b_n\bigr)\setminus{\preeverFalse}_{n+1}=
x\cap b_{n:n+1}\;.
$$
Now, let consider the second component $\Bigl(\bigl(\varphi_n(x_n,b_n)\cap \sim_n b_n\bigr)\times b_n\Bigr)\setminus{\preeverFalse}_{n+1}$\,.
\\
From the preliminary remark, it is deduced 
$$\varphi_n(\sim_n x_n\cap b_n,b_n)\cap\varphi_n(x_n,b_n)\cap\sim_n b_n=\emptyset\;,$$
and
$$\begin{array}{@{}l@{}}\displaystyle
\sim_n x_n\cap b_n\cap\varphi_n(\varphi_n(x_n,b_n)\cap\sim_n b_n,\sim_n b_n)
=\sim_n x_n\cap b_n\cap\varphi_n(\varphi_n(x_n,b_n),\sim_n b_n)
\vspace{4pt}\\\displaystyle
\rien\qquad\qquad\qquad\qquad=\sim_n x_n\cap b_n\cap\varphi_n(x_n,b_n)=
\sim_n x_n\cap b_n\cap x_n
=\emptyset
\;.\end{array}$$
As a consequence, $(\varphi_n(x_n,b_n)\cap\sim_n b_n,\sim_n x_n\cap b_n)\in Z_{n+1}$\,.
Similarly, it is shown $(\varphi_n(\sim_n x_n,b_n)\cap\sim_n b_n,x_n\cap b_n)\in Z_{n+1}$\,.
As a consequence:
$$\begin{array}{@{}l@{}}\displaystyle
\Bigl(\bigl(\varphi_n(x_n,b_n)\cap \sim_n b_n\bigr)\times b_n\Bigr)\setminus{\preeverFalse}_{n+1}=
\Bigl(\bigl(\varphi_n(x_n,b_n)\cap \sim_n b_n\bigr)\times (x_n\cap b_n)\Bigr)\setminus{\preeverFalse}_{n+1}
\vspace{4pt}\\\displaystyle
\rien\qquad\qquad\qquad\qquad
=\bigl(\sim_n b_n\times (x_n\cap b_n)\bigr)\setminus{\preeverFalse}_{n+1}
=\thetransp{}(x\cap b_{n:n+1})\;.
\end{array}$$
Thus the result.
\\\emph{Proof of (\ref{prop:def:varphi:coherence:b}).}
Proof is similar to~(\ref{prop:def:varphi:coherence:b}).
\\\emph{Proof of (\ref{prop:def:varphi:coherence:ba}).}
From~(\ref{prop:def:varphi:coherence:a}), it is deduced: $$\varphi_{n+1}(b_{n:n+1},b_{n:n+1})=(\theid{}\cup \thetransp{})(b_{n:n+1})=(\theid{}\cup\thetransp{})(b_n\times \sim_n b_n)
\setminus {\preeverFalse}_{n+1}=\everTrue_{n+1}\;.$$
\emph{Proof of (\ref{prop:def:varphi:coherence:bb}).}
Proof is similar to~(\ref{prop:def:varphi:coherence:bb}).
\\\emph{Proof of (\ref{prop:def:varphi:coherence:c}), (\ref{prop:def:varphi:coherence:cb}), (\ref{prop:def:varphi:coherence:d}) and (\ref{prop:def:varphi:coherence:db}).}
Immediate corollaries of~(\ref{prop:def:varphi:coherence:a}) and (\ref{prop:def:varphi:coherence:b}).
\\\emph{Proof of (\ref{prop:def:varphi:coherence:e}).}
Since $\thetransp{}(b_{n:n+1})=\sim_{n+1}b_{n:n+1}$ by definition, it comes:
$$
b_{n:n+1}\cap\varphi_{n+1}(x,b_{n:n+1})=b_{n:n+1}\cap(\theid{}\cup \thetransp{})(x\cap b_{n:n+1})
=x\cap b_{n:n+1}
\,.
$$
\\\emph{Proof of (\ref{prop:def:varphi:coherence:f}).}
Proof is similar to~(\ref{prop:def:varphi:coherence:e}).
\\\emph{Proof of (\ref{prop:def:varphi:coherence:g}).}
It is first deduced:
$$\begin{array}{@{}l@{}}\displaystyle
\varphi_{n+1}\bigl(\varphi_{n+1}(x,b_{n:n+1}),b_{n:n+1}\bigr)=\varphi_{n+1}\bigl(b_{n:n+1}\cap\varphi_{n+1}(x,b_{n:n+1}),b_{n:n+1}\bigr)
\vspace{4pt}\\\displaystyle
\rien\qquad\qquad\qquad\qquad\qquad\qquad\qquad
=\varphi_{n+1}(b_{n:n+1}\cap x,b_{n:n+1})=\varphi_{n+1}(x,b_{n:n+1})\;.
\end{array}$$
Now, by applying property~(\ref{prop:def:varphi:coherence:a}), it comes:
$$\begin{array}{@{}l@{}}\displaystyle
\varphi_{n+1}(\varphi_{n+1}(x,b_{n:n+1}),\sim_{n+1}b_{n:n+1})=
(\theid{}\cup \thetransp{})\bigl((\theid{}\cup \thetransp{})(x\cap b_{n:n+1})\cap \sim_{n+1}b_{n:n+1}\bigr)
\vspace{4pt}\\\displaystyle
\rien\qquad\qquad\qquad\qquad
=
(\theid{}\cup \thetransp{})\bigl(\thetransp{}(x\cap b_{n:n+1})\bigr)
=
(\theid{}\cup \thetransp{})(x\cap b_{n:n+1})
=\varphi_{n+1}(x,b_{n:n+1})\;.
\end{array}$$
\\\emph{Proof of (\ref{prop:def:varphi:coherence:h}).}
Proof is similar to~(\ref{prop:def:varphi:coherence:g}).
\end{proof}
\begin{property}[conditional Boolean morphism]\label{conditional:morphism:property:1}
Let $i\in \Nset$\,, $x\in E_i$ and $y,z\in D_i(x)$\,. 
Then $\varphi_i(y\cup z,x)=\varphi_i(y,x)\cup \varphi_i(z,x)$ and $\varphi_i(\sim_i y,x)=\sim_i \varphi_i(y,x)$\,.
\end{property}
\begin{proof}
An immediate induction from definition~\ref{def:case:0}, property~\ref{prop:def:commutation} and lemma~\ref{prop:def:varphi:coherence}.
\end{proof}
\begin{property}[reflexive conditioning]\label{reflex:prop:1}
Let $i\in\Nset$.
Then:
\begin{itemize}
\item If $(\emptyset,\emptyset)\in D_i\,,$ then $\varphi_i(\emptyset,\emptyset)=\emptyset$\,,
\item If $(x,x)\in D_i\setminus \bigl\{(\emptyset,\emptyset)\bigr\}\,,$ then
$\varphi_i(x,x)=\everTrue_i$\,.
\end{itemize}
\end{property}
\begin{proof}
An immediate induction from definition~\ref{def:case:0}, property~\ref{prop:def:commutation} and lemma~\ref{prop:def:varphi:coherence}.
\end{proof}
\begin{property}[conditional inference]\label{conditional:inference:property:1}
Let $i\in \Nset$\,.
Let $x\in E_i$ and $y\in D_i(x)$\,. 
Then $x\cap\varphi_i(y,x)=x\cap y$\,.
\end{property}
\begin{proof}
An immediate induction from definition~\ref{def:case:0}, property~\ref{prop:def:commutation} and lemma~\ref{prop:def:varphi:coherence}.
\end{proof}
\begin{property}[independence]\label{independence:property:1}
Let $i\in \Nset$\,.
Let $x\in E_i$ and $y\in D_i(x)$\,.
Then:
$$
\varphi_i\bigl(\varphi_i(y,x),x\bigr)=\varphi_i\bigl(\varphi_i(y,x),\sim_ix\bigr)
=\varphi_i(y,x)\;.
$$
\end{property}
\begin{proof}
An immediate induction from definition~\ref{def:case:0}, property~\ref{prop:def:commutation} and lemma~\ref{prop:def:varphi:coherence}.
\end{proof}
\begin{lemma}\label{prop:coherence:Z:0:0:1}
Let  $i\in\Nset$\,, $x\in E_i\setminus\{\everFalse\}$ and $y,z\in D_i(x)$\,.
Then:
$$
x\cap y\cap \varphi_i(z,\sim_i x)=\emptyset
\mbox{ \ implies \ }
\sim_i x\cap z\cap \varphi_i(y,x)=\emptyset
\;.
$$
\end{lemma}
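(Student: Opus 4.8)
The plan is to run the same induction on $i$ that underlies Lemma~\ref{prop:def:varphi:coherence} and the properties derived from it, since this statement is exactly the symmetry hypothesis entering the definition of $Z_{n+1}$. For $i=0$ there is nothing to prove, $D_0=\emptyset$. So assume the claim holds for all indices $\le n-1$, and consider $i=n+1$ with $b_n\notin\{\emptyset,\everTrue_n\}$ (the case $b_n\in\{\emptyset,\everTrue_n\}$ is immediate from Definition~\ref{def:case:0}, since there $D_{n+1}$ only adds pairs $(x,\emptyset)$, $(x,\everTrue_{n+1})$ and $\varphi_{n+1}$ returns the first coordinate, so the two exclusion conditions become identical). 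Fix $x\in E_{n+1}\setminus\{\everFalse\}$ and $y,z\in D_{n+1}(x)$ with $x\cap y\cap\varphi_{n+1}(z,\sim_{n+1}x)=\emptyset$; we must show $\sim_{n+1}x\cap z\cap\varphi_{n+1}(y,x)=\emptyset$.

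First I would dispose of the ``old'' case: if $x$, $y$, $z$ all lie in $\themapping{n,n+1}(D_n)$, i.e. $x=\themapping{n,n+1}(x_n)$ etc. with $x_n\in E_n\setminus\{\everFalse\}$, $y_n,z_n\in D_n(x_n)$, then by Property~\ref{prop:def:commutation} and the fact that $\themapping{n,n+1}$ is a Boolean morphism, the hypothesis pulls back to $x_n\cap y_n\cap\varphi_n(z_n,\sim_n x_n)=\emptyset$ (the morphism is injective on the relevant level — or at worst, one checks the images are disjoint iff the preimages are, using that $\themapping{n,n+1}(a)=\emptyset$ forces $a\cap\everTrue_n=\emptyset$ by the structure of $\preeverFalse_{n+1}$ together with $\themapping{i,n+1}(\everTrue_i)=\everTrue_{n+1}$). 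The induction hypothesis then gives $\sim_n x_n\cap z_n\cap\varphi_n(y_n,x_n)=\emptyset$, and applying $\themapping{n,n+1}$ back recovers the conclusion. The substantive case is therefore when $x\in\{b_{n:n+1},\sim_{n+1}b_{n:n+1}\}$ — these are the genuinely new conditioning events at level $n+1$ — say $x=b_{n:n+1}$ (the other is symmetric, swapping the roles via $\thetransp{}$). Then by Lemma~\ref{prop:def:varphi:coherence}(\ref{prop:def:varphi:coherence:a})--(\ref{prop:def:varphi:coherence:b}) we have $\varphi_{n+1}(y,b_{n:n+1})=(\theid{}\cup\thetransp{})(y\cap b_{n:n+1})$ and $\varphi_{n+1}(z,\sim_{n+1}b_{n:n+1})=(\theid{}\cup\thetransp{})(z\cap\sim_{n+1}b_{n:n+1})$, and the hypothesis becomes
$$
b_{n:n+1}\cap y\cap(\theid{}\cup\thetransp{})(z\cap\sim_{n+1}b_{n:n+1})=\emptyset\;.
$$
Since $\thetransp{}(b_{n:n+1})=\sim_{n+1}b_{n:n+1}$, the part of $(\theid{}\cup\thetransp{})(z\cap\sim_{n+1}b_{n:n+1})$ meeting $b_{n:n+1}$ is exactly $\thetransp{}(z\cap\sim_{n+1}b_{n:n+1})$; so the hypothesis says $b_{n:n+1}\cap y\cap\thetransp{}(z\cap\sim_{n+1}b_{n:n+1})=\emptyset$. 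Dually, the conclusion $\sim_{n+1}b_{n:n+1}\cap z\cap(\theid{}\cup\thetransp{})(y\cap b_{n:n+1})=\emptyset$ reduces to $\sim_{n+1}b_{n:n+1}\cap z\cap\thetransp{}(y\cap b_{n:n+1})=\emptyset$. Now apply $\thetransp{}$ to the hypothesis: $\thetransp{}$ is an involution on sets of pairs sending $b_{n:n+1}$ to $\sim_{n+1}b_{n:n+1}$ and vice versa, and distributes over $\cap$; so $\thetransp{}\bigl(b_{n:n+1}\cap y\cap\thetransp{}(z\cap\sim_{n+1}b_{n:n+1})\bigr)=\sim_{n+1}b_{n:n+1}\cap\thetransp{}(y)\cap(z\cap\sim_{n+1}b_{n:n+1})$. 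That is almost the conclusion but with $\thetransp{}(y)$ where we want $\thetransp{}(y\cap b_{n:n+1})$; but $\sim_{n+1}b_{n:n+1}\cap\thetransp{}(y)=\thetransp{}(b_{n:n+1}\cap y)=\thetransp{}(b_{n:n+1})\cap\thetransp{}(y)$... here one uses that $y\in D_{n+1}(b_{n:n+1})$ forces $y\subset b_{n:n+1}$, so $\thetransp{}(y)\subset\sim_{n+1}b_{n:n+1}$ and $y\cap b_{n:n+1}=y$ — the cropping makes the discrepancy vanish. Hence the hypothesis transported by $\thetransp{}$ is precisely the conclusion.

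The main obstacle I anticipate is not the new case — which, as above, collapses to the observation that $\thetransp{}$ is the built-in symmetrization and conjugates $b_{n:n+1}$ with $\sim_{n+1}b_{n:n+1}$ — but the bookkeeping when $x$, $y$, $z$ are of \emph{mixed} origin: e.g. $x=b_{n:n+1}$ but $y=\themapping{n,n+1}(y_n)$ with $y_n\in D_n(b_n)$ (so $y\in\themapping{n,n+1}(D_n)\cap D_{n+1}(b_{n:n+1})$), or $z$ new while $x$ old. For these I would use the Preliminary Remark from the proof of Lemma~\ref{prop:def:varphi:coherence} — namely that for such a $y_n$ one has $\varphi_n(y_n,b_n)\cap b_n=y_n\cap b_n$ and the coherence identities (\ref{prop:def:varphi:coherence:a})--(\ref{prop:def:varphi:coherence:h}) hold for $y_n$ — to reduce every $\varphi_{n+1}$-term to the explicit form $(\theid{}\cup\thetransp{})(\,\cdot\cap b_{n:n+1})$ or to a $\themapping{n,n+1}$-image of a level-$n$ term, after which the computation is the same $\thetransp{}$-conjugation as above combined with the level-$n$ induction hypothesis. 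Once all the $\varphi$-terms are normalized this way, Lemma~\ref{lemma:1} handles the set-theoretic manipulations of $\setminus\preeverFalse_{n+1}$ cleanly, and the disjointness claim follows. I would also record at the outset, for use throughout, that $x\in E_{n+1}\setminus\{\everFalse\}$ together with $y\in D_{n+1}(x)$ propagates down: either $x$ is one of $b_{n:n+1},\sim_{n+1}b_{n:n+1}$, or $x=\themapping{n,n+1}(x_n)$ with $x_n\neq\everFalse$ (using injectivity, which follows from $\themapping{i,n+1}(\everTrue_i)=\everTrue_{n+1}$ and the morphism property), so that the induction hypothesis is applicable at level $n$.
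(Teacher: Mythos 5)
There is a genuine gap, and it sits exactly where you flagged the least concern: the ``old'' case of your induction. To pull the hypothesis back to level $n$ you must pass from $\themapping{n,n+1}\bigl(x_n\cap y_n\cap \varphi_n(z_n,\sim_n x_n)\bigr)=\emptyset$ to $x_n\cap y_n\cap \varphi_n(z_n,\sim_n x_n)=\emptyset$, i.e.\ you need the injectivity of $\themapping{n,n+1}$. But in this development injectivity is property~\ref{injective:mapping:property}, which is proved \emph{after} this lemma and whose proof invokes lemma~\ref{prop:coherence:Z:1}, which in turn invokes the very statement you are proving. Your parenthetical patch --- that $\themapping{n,n+1}(a)=\emptyset$ ``forces $a\cap\everTrue_n=\emptyset$'' --- is vacuous, since $a\cap\everTrue_n=a$ for $a\in E_n$; the claim that $\themapping{n,n+1}$ kills only $\emptyset$ \emph{is} injectivity, and it is not free: the paper needs the full strength of lemma~\ref{prop:coherence:Z:1} (hence of this lemma) to rule out $\themapping{n,n+1}(x)\subset\preeverFalse_{n+1}$ for $x\ne\emptyset$. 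So your induction is circular as written. (Your ``new'' case, $x\in\{b_{n:n+1},\sim_{n+1}b_{n:n+1}\}$, is fine: the $\thetransp{}$-conjugation argument is correct, and you do not in fact need $y\subset b_{n:n+1}$ there, only that $\thetransp{}$ distributes over $\cap$ and exchanges $b_{n:n+1}$ with $\sim_{n+1}b_{n:n+1}$.)

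The paper avoids all of this by not descending to the definitions at all. The lemma is placed after properties~\ref{conditional:morphism:property:1}, \ref{reflex:prop:1}, \ref{conditional:inference:property:1} and~\ref{independence:property:1}, and its proof is a four-line computation valid uniformly at any fixed level $i$: write $\sim_i x\cap z\cap\varphi_i(y,x)=\sim_i x\cap\varphi_i(z,\sim_i x)\cap\varphi_i(x\cap y,x)$ using conditional inference (property~\ref{conditional:inference:property:1}) and reflexive conditioning; replace $\varphi_i(z,\sim_i x)$ by $\varphi_i\bigl(\varphi_i(z,\sim_i x),x\bigr)$ using independence (property~\ref{independence:property:1}); merge the two conditionings via the morphism property (property~\ref{conditional:morphism:property:1}) into $\varphi_i\bigl(x\cap y\cap\varphi_i(z,\sim_i x),x\bigr)=\varphi_i(\emptyset,x)=\emptyset$. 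If you want to salvage your proof, the fix is to drop the induction and the old/new case split entirely and argue at level $i$ from these already-available properties --- precisely because they let you transport the hypothesis through $\varphi_i(\cdot,x)$ without ever comparing level $n$ with level $n+1$.
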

\begin{proof}
From $x\cap y\cap \varphi_i(z,\sim_i x)=\emptyset$, it is deduced:
\begin{align*}
&\sim_i x\cap z\cap \varphi_i(y,x)=\sim_i x\cap\varphi_i(z,\sim_ix)\cap \varphi_i(x\cap y,x)
\\
&\rien\qquad\qquad\quad
=\sim_i x\cap \varphi_i\bigl(\varphi_i(z,\sim_ix),x\bigr)\cap \varphi_i(x\cap y,x)
\\
&\rien\qquad\qquad\quad
=
\sim_i x\cap \varphi_i\bigl(x\cap y\cap \varphi_i(z,\sim_ix),x\bigr)
=
\sim_i x\cap \varphi_i\bigl(\emptyset,x\bigr)=
\emptyset\qedhere
\end{align*}
\end{proof}
\begin{lemma}\label{prop:coherence:Z:1}
Let  $i\in\Nset$ such that $b_i\not\in\{\everFalse,\everTrue_i\}$\,.
Let $x\in \thecropof{E_i}{\sim_ib_i}$ and $y\in \thecropof{E_i}{b_i}$ such that:
$$
\themapping{i,i+1}(y)\cap \varphi_{i+1}\bigl(\themapping{i,i+1}(x),\sim_{i+1}b_{i:i+1}\bigr)=\emptyset
$$
or:
$$
\themapping{i,i+1}(x)\cap \varphi_{i+1}\bigl(\themapping{i,i+1}(y),b_{i:i+1}\bigr)=\emptyset
\;.
$$
Then:
$$
x=\emptyset
\mbox{ \ or \ }
y=\emptyset
\mbox{ \ or \ }
\exists (t,u)\in Z_{i+1}\,,\; x\times y\subset t\times u\;.
$$
\end{lemma}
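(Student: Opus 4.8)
The plan is to reduce both alternative hypotheses to the single relation $x\times y\subset\preeverFalse_{i+1}$, and then to show that this inclusion forces $x\times y$ into one rectangle $t\times u$ with $(t,u)\in Z_{i+1}$.

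For the reduction, observe that $x\subset\sim_i b_i$ and $y\subset b_i$ make the definition of $\themapping{i,i+1}$ collapse to $\themapping{i,i+1}(x)=(x\times b_i)\setminus\preeverFalse_{i+1}$ and $\themapping{i,i+1}(y)=(y\times\sim_i b_i)\setminus\preeverFalse_{i+1}$, while $b_{i:i+1}=(b_i\times\sim_i b_i)\setminus\preeverFalse_{i+1}$ and $\sim_{i+1}b_{i:i+1}=\thetransp{}(b_{i:i+1})=(\sim_i b_i\times b_i)\setminus\preeverFalse_{i+1}$. Since $b_i\notin\{\emptyset,\everTrue_i\}$, feeding these into lemma~\ref{prop:def:varphi:coherence}(\ref{prop:def:varphi:coherence:a})--(\ref{prop:def:varphi:coherence:b}), and using that $\preeverFalse_{i+1}$ is invariant under $\thetransp{}$ (immediate from its definition), one gets $\varphi_{i+1}\bigl(\themapping{i,i+1}(x),\sim_{i+1}b_{i:i+1}\bigr)=\bigl((x\times b_i)\cup(b_i\times x)\bigr)\setminus\preeverFalse_{i+1}$ and $\varphi_{i+1}\bigl(\themapping{i,i+1}(y),b_{i:i+1}\bigr)=\bigl((y\times\sim_i b_i)\cup(\sim_i b_i\times y)\bigr)\setminus\preeverFalse_{i+1}$. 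Intersecting with $\themapping{i,i+1}(y)$, resp.\ with $\themapping{i,i+1}(x)$, the first hypothesis becomes $(y\times x)\setminus\preeverFalse_{i+1}=\emptyset$ and the second becomes $(x\times y)\setminus\preeverFalse_{i+1}=\emptyset$; by $\thetransp{}$-invariance of $\preeverFalse_{i+1}$ both amount to $x\times y\subset\preeverFalse_{i+1}$. This bookkeeping of the $\preeverFalse_{i+1}$-removals through $\themapping{i,i+1}$ and $\varphi_{i+1}$ is the most error-prone step of the proof, although it is entirely routine.

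Now suppose $x\neq\emptyset$ and $y\neq\emptyset$ (otherwise the conclusion is immediate), and argue in terms of the atoms of the finite Boolean algebra $E_i$, i.e.\ the elements of its generating partition $\thepartof{E_i}$, writing $x$ and $y$ as unions of atoms. Given atoms $a\subset x$ and $b\subset y$, choose $(p,q)\in a\times b$; from $(p,q)\in\preeverFalse_{i+1}=\bigcup_{(t,u)\in Z_{i+1}}\bigl((t\times u)\cup(u\times t)\bigr)$, and because $p\in a\subset\sim_i b_i$ while the second component $u$ of any pair in $Z_{i+1}$ lies in $\thecropof{E_i}{b_i}$, the point $(p,q)$ cannot lie in $u\times t$ for any $(t,u)\in Z_{i+1}$, hence lies in $t\times u$ for some such pair; atomicity of $a$ and $b$ then forces $a\subset t$ and $b\subset u$. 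Thus every atom-rectangle $a\times b\subset x\times y$ is covered by a single rectangle $t_{a,b}\times u_{a,b}$ with $(t_{a,b},u_{a,b})\in Z_{i+1}$.

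Next I would check that $Z_{i+1}$ is stable under $\bigl((t_1,u_1),(t_2,u_2)\bigr)\mapsto(t_1\cap t_2,\,u_1\cup u_2)$ and under $\bigl((t_1,u_1),(t_2,u_2)\bigr)\mapsto(t_1\cup t_2,\,u_1\cap u_2)$. The containments $t_1\cap t_2,\,t_1\cup t_2\subset\sim_i b_i$ and $u_1\cup u_2,\,u_1\cap u_2\subset b_i$, together with membership in the relevant $D_i(\cdot)$, hold because $D_i(b_i)=D_i(\sim_i b_i)$ is closed under $\cap$, $\cup$ and $\sim_i$; and the exclusion constraints $t_j\cap\varphi_i(u_j,b_i)=u_j\cap\varphi_i(t_j,\sim_i b_i)=\emptyset$ propagate because $\varphi_i(\cdot,b_i)$ and $\varphi_i(\cdot,\sim_i b_i)$ are Boolean morphisms on $D_i(b_i)$ (property~\ref{conditional:morphism:property:1}) and hence distribute over $\cup$ and $\cap$, so that, e.g., $(t_1\cap t_2)\cap\varphi_i(u_1\cup u_2,b_i)\subset\bigl(t_1\cap\varphi_i(u_1,b_i)\bigr)\cup\bigl(t_2\cap\varphi_i(u_2,b_i)\bigr)=\emptyset$, and likewise for the three remaining constraints. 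Finally, fixing an atom $a\subset x$ and combining the finite family $\{(t_{a,b},u_{a,b})\}_{b\subset y}$ by the first stability operation yields $(t_a,u_a)\in Z_{i+1}$ with $a\subset t_a$ and $y\subset u_a$; combining $\{(t_a,u_a)\}_{a\subset x}$ by the second operation then yields $(t,u)\in Z_{i+1}$ with $x\subset t$ and $y\subset u$, i.e.\ $x\times y\subset t\times u$, which is the desired conclusion. The conceptual crux is the stability of $Z_{i+1}$; everything else is unwinding definitions, the reduction of the two hypotheses being the only computation that needs care.
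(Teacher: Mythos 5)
Your proof is correct and follows essentially the same route as the paper's: both arguments reduce the hypotheses to $x\times y\subset\preeverFalse_{i+1}$ by unwinding the definitions of $\themapping{i,i+1}$, $\sim_{i+1}b_{i:i+1}$ and $\varphi_{i+1}(\cdot,\cdot)$, and then merge the covering rectangles from $Z_{i+1}$ by intersecting/unioning their components, checking that the exclusion constraints survive via the Boolean-morphism property of $\varphi_i(\cdot,b_i)$ and $\varphi_i(\cdot,\sim_i b_i)$. The only differences are organizational: you index by atoms of $E_i$ rather than by points of $y$, and you package the merging as an explicit two-sided stability of $Z_{i+1}$ under $(t_1\cap t_2,u_1\cup u_2)$ and $(t_1\cup t_2,u_1\cap u_2)$, where the paper verifies a single exclusion condition and recovers the other from lemma~\ref{prop:coherence:Z:0:0:1}.
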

\begin{proof}
Denote $x'=\themapping{i,i+1}(x)$\,, $y'=\themapping{i,i+1}(y)$ and $b'=b_{i:i+1}$\,.
From lemma~\ref{prop:coherence:Z:0:0:1}, it is deduced:
$$
y'\cap \varphi_{i+1}(x',\sim_{i+1}b')=\emptyset
\mbox{ \ and \ }
x'\cap \varphi_{i+1}(y',b')=\emptyset
\;.
$$
By applying the definitions to $y'\cap \varphi_{i+1}(x',\sim_{i+1}b')=\emptyset$\,, it comes:
$$
y\times x =(y\times \sim_i b_i)\cap(b_i\times x)\subset{\preeverFalse}_{n+1}
=\bigcup_{(t,u)\in Z_{n+1}}((t\times u)\cup(u\times t))\;.
$$
At this step, it is noticed that ${\preeverFalse}_{i+1}=\emptyset$ implies $x=\emptyset$ or $y=\emptyset$\,, the third conclusion being refuted.
Now, it is assumed that $x\ne\emptyset$ and $y\ne\emptyset$\,.
It is thus deduced $y\times x \subset\bigcup_{(t,u)\in Z_{i+1}}(u\times t)\,.$
For all $\omega\in y$\,, let $J_\omega=\bigl\{(t,u)\in Z_{i+1}\;\big/\; \omega\in u\bigr\}\,.$
It comes:
$$
\{\omega\}\times x \subset \left(\bigcap_{(t,u)\in J_\omega}u\right)\times\left(\bigcup_{(t,u)\in J_\omega}t\right)\,.
$$
Define $u_\omega=\bigcap_{(t,u)\in J_\omega}u$ and $t_\omega=\bigcup_{(t,u)\in J_\omega}t$\,.
Since $\varphi_i\left(t,\sim_i b_i\right)$ is defined for all $(t,u)\in Z_{i+1}$, then $\varphi_i\left(t_\omega,\sim_i b_i\right)$ is defined and:
$$
u_\omega\cap\varphi_i\left(t_\omega,\sim_i b_i\right)=
\bigcup_{(t,u)\in J_\omega}\left(\left(\bigcap_{(t,u)\in J_\omega}u\right)\cap\varphi_i(t,\sim_i b_i)\right)\subset\bigcup_{(t,u)\in J_\omega}\bigl(u\cap\varphi_i(t,\sim_i b_i)\bigr)=\emptyset\,.
$$
As a consequence:
$$
\left(\bigcup_{\omega\in y}u_\omega\right)
\cap
\varphi_i\left(\bigcap_{\omega\in y}t_\omega,\sim_i b_i\right)
\subset
\bigcup_{\omega\in y}\left(u_\omega\cap\varphi_i\left(t_\omega,\sim_i b_i\right)\right)
=\emptyset\,.
$$
By applying lemma~\ref{prop:coherence:Z:0:0:1}, it is deduced:
\[
x\times y\subset\left(\bigcap_{\omega\in y}t_\omega\right)\times\left(\bigcup_{\omega\in y}u_\omega\right)\;,
\mbox{ \ with }
\left(\bigcap_{\omega\in y}t_\omega,\bigcup_{\omega\in y}u_\omega\right)\in Z_{i+1}\;.
\qedhere\]
\end{proof}
\begin{property}[injective mapping]\label{injective:mapping:property}
$\themapping{i,j}$ is injective for all $i,j\in\Nset$ such that $i\le j$\,.
\end{property}
\begin{proof}
It is equivalent to prove that $\themapping{i,i+1}$ is injective for all $i\in \Nset$\,.
\\[4pt]
In the case $b_i\in\{\emptyset,\everTrue_i\}$\,, then $\themapping{i,i+1}=\theid{E_i}$ by definition and is injective.
\\[4pt]
Assume $b_i\not\in\{\emptyset,\everTrue_i\}$\,.
Let $x\in E_i$ such that $\themapping{i,i+1}(x)=\emptyset$\,.
Then:
$$
\themapping{i,i+1}(x\cap b_{i})\cap \varphi_{i+1}\bigl(\sim_{i+1}b_{i:i+1},\sim_{i+1}b_{i:i+1}\bigr)=\emptyset\;.
$$
By applying lemma~\ref{prop:coherence:Z:1}\,, it is deduced the disjunction of three possible conclusions:
$$
\sim_i b_i=\emptyset
\mbox{ or }
x\cap b_i=\emptyset
\mbox{ or }
\exists (t,u)\in Z_{i+1}\,,\; \sim_ib_i\times (x\cap b_i)\subset t\times u\;.
$$
It is hypothesized that $\sim_i b_i\ne\emptyset$\,.
Then, first conclusion is refuted.
Assume third conclusion.
Then necessarily, $D_i(b_i)\ne\emptyset$\,, and then $\sim_i b_i\in D_i(b_i)$ and $\varphi_i(\sim_ib_i,\sim_ib_i)=\everTrue_i$\,.
Let $(t,u)\in Z_{i+1}$ be such that $\sim_ib_i\times (x\cap b_i)\subset t\times u$\,.
Then $\sim_ib_i\subset t$ and $u=u\cap\varphi_i(t,\sim_ib_i)=\emptyset$ by applying property~\ref{reflex:prop:1}.
At last, $x\cap b_i=\emptyset\,,$ in any cases.
Similarly, it is proved $x\cap \sim_ib_i=\emptyset$ and it is deduced $x=\emptyset$\,.
\\[3pt]
At this point, it is proved that $\themapping{i,i+1}(x)=\emptyset$ implies $x=\emptyset$\,.
Since moreover, $\themapping{i,i+1}$ is a Boolean morphism, it is injective.
\end{proof}
\begin{property}\label{partial:mapping:property}
$D_{i+1}\supseteq \thejointmapping{i,i+1}(D_i)\cup \left\{ \left(x_{i+1},\themapping{c_\gamma(i),i+1}\circ u\circ\gamma^{-1}(i)\right) \;\Big/\; x_{i+1}\in E_{i+1}\right\}$ for all $i\in\Nset\,$.
\end{property}
\begin{proof}
A direct consequence of the definition $b_i=\themapping{c_\gamma(i),i}\circ u\circ\gamma^{-1}(i)$\,.
\end{proof}
\subsubsection{Bayesian algebra and probability extension (finite case)}
It happens that $(E_i,\themapping{i:j})_{\substack{i,j\in\Nset\\
i\le j}}$ and $(D_i,\thejointmapping{i:j})_{\substack{i,j\in\Nset\\
i\le j}}$ are directed systems.
Moreover, $E_i$, $\themapping{i,j}$\,, $D_i$,  $\varphi_i:D_i\rightarrow E_i$ and $u_i=u(i,\cdot)$ 
match the hypothesis of property~\ref{partial:mapping:direct:limit}\,.
Then it is deduced the following property.
\begin{property}[algebraic extension]\label{prop:bayes:extent:1}
Define the Boolean algebra $(E,\cap,\cup,\sim,\emptyset,\everTrue)$ as the direct limit of $\bigl((E_i,\themapping{i,j})\Bigr)_{\substack{i,j\in\Nset\\
i\le j}}$ and let $\themapping{i}:E_i\longrightarrow E$ be the canonical mapping.
Then:
\begin{itemize}
\item $E$ is infinite countable,
\item $\themapping{i}$ is an injective Boolean morphism,
\item For all $(x,y)\in E\times E$\,, there is $i\in \Nset$ and $(x_i,y_i)\in D_i$ such that $(x,y)=\bigl(\themapping{i}(x_i)\themapping{i}(y_i)\bigr)$\,,
\item There is a mapping $\varphi:E\times E\rightarrow E$ defined by:
$$
\varphi\bigl(\themapping{i}(x_i)\themapping{i}(y_i)\bigr) = \themapping{i}\circ\varphi_i(x_i,y_i)\mbox{ \ for all }i\in\Nset
\mbox{ and }
(x_i,y_i)\in D_i\;,
$$
\item The mapping $\varphi$ is such that:
\begin{align}
\label{eq:bool:1}&z\mapsto \varphi(z,x)\mbox{ is a Boolean automorphism of }E\,,\\
\label{eq:necess:1}&x\subset y\mbox{ implies }\varphi(y,x)=\everTrue\mbox{ or }x=\everFalse\,,\\
\label{eq:infer:1}&x\cap\varphi(y,x)=x\cap y\,,\\
\label{eq:indep:1}&\varphi\bigl(\varphi(y,x),x\bigr)=\varphi\bigl(\varphi(y,x),\sim x\bigr)=\varphi(y,x)\,,
\end{align}
for all $x,y\in E$\,.
\end{itemize}
\end{property}
\begin{proof}
Immediate consequence of properties~\ref{partial:mapping:direct:limit}, \ref{conditional:morphism:property:1},
\ref{conditional:inference:property:1},
\ref{independence:property:1},
\ref{injective:mapping:property},
and~\ref{partial:mapping:property}
\end{proof}
\begin{notation}
From now on, it is defined
$
E_{i:}=\themapping{i}(E_i)
\mbox{ \ and \ }
b_{i:}=\themapping{i}(b_i)
\,.
$
\end{notation}
Since $E_{i:}$ is finite, it will be useful to characterize its generating partition.
\begin{property}[generating partition]\label{gen:part:prop:1}\rien
\begin{itemize}
\item If $b_{n:}\in\{\emptyset,\everTrue\}$\,, then $\thepartof{E_{n+1:}}=\thepartof{E_{n:}}$\,.
\item
If $b_{n:}\not\in\{\emptyset,\everTrue\}$\,, then $\displaystyle
\thepartof{E_{n+1:}}=\bigcup_{x\in\{b_{n:},\sim b_{n:}\}}
\left\{\left.\omega\cap\varphi(\upsilon,x)\;\right/\;
(\omega,\upsilon)\in\thecropof{\thepartof{E_{n:}}}{\sim x,x}\right\}\,.$
\end{itemize}
\end{property}
\begin{proof}
The case $b_{n:}\in\{\everFalse,\everTrue\}$ being obvious, it is assumed $b_{n:}\not\in\{\everFalse,\everTrue\}$\,.
Let $\omega_1,\omega_2\in\thecropof{\thepartof{E_{n:}}}{b_{n:}}$ and $\upsilon_1,\upsilon_2\in\thecropof{\thepartof{E_{n:}}}{\sim b_{n:}}$\,.
Then:
$$
(\omega_1,\upsilon_1)\ne(\omega_2,\upsilon_2)
\mbox{ \ implies \ }
\left\{\begin{array}{l@{}}
\bigl(\omega_1\cap \varphi(\upsilon_1,\sim b_{n:})\bigr)
\cap
\bigl(\omega_2\cap \varphi(\upsilon_2,\sim b_{n:})\bigr)=\emptyset\;,
\vspace{3pt}\\
\bigl(\upsilon_1\cap \varphi(\omega_1,b_{n:})\bigr)
\cap
\bigl(\upsilon_2\cap \varphi(\omega_2,b_{n:})\bigr)=\emptyset\;.
\end{array}\right.
$$
Now:
$$
\bigcup_{(\omega,\upsilon)\in\thecropof{\thepartof{E_{n:}}}{\sim b_{n:},b_{n:}}}
\omega\cap \varphi(\upsilon,b_{n:})= \sim b_{n:}
\mbox{ \ and \ }
\bigcup_{(\omega,\upsilon)\in\thecropof{\thepartof{E_{n:}}}{\sim b_{n:},b_{n:}}}
\upsilon\cap \varphi(\omega,\sim b_{n:})
= b_{n:}\;.
$$
Then the property is concluded as a direct consequence of lemma~\ref{prop:coherence:Z:1}\,,
which implies:
\begin{description}
\item[case 0]$b_{i:}\ne b_{n:}$\, for all $i<n$\,.
$$
\upsilon\cap \varphi(\omega,\sim b_{n:})\ne\emptyset
\iff
\omega\cap \varphi(\upsilon,b_{n:})\ne\emptyset
\iff
(\omega,\upsilon)\in
\thecropof{\thepartof{E_{n:}}}{\sim b_{n:},b_{n:}}
\;,
$$
\item[case 1] It is defined the greatest $k<n$ such that $b_{k:}= b_{n:}$\,.
$$
\upsilon\cap \varphi(\omega,\sim b_{n:})\ne\emptyset
\iff
\omega\cap \varphi(\upsilon,b_{n:})\ne\emptyset
\iff
(\omega,\upsilon)\in\!\!\!\!\!\!\!\!\!\!\bigcup_{
(y,z)\in\\\thecropof{\thepartof{E_{k+1:}}}{\sim b_{n:},b_{n:}}}
\!\!\!\!\!\!\!\!\!\!\thecropof{\thepartof{E_{n:}}}{y,z}
\,.
$$
\end{description}
But it happens that
$\displaystyle
\thecropof{\thepartof{E_{n:}}}{\sim b_{n:},b_{n:}}
=
\bigcup_{
(y,z)\in\\\thecropof{\thepartof{E_{k+1:}}}{\sim b_{n:},b_{n:}}}
\!\!\!\!\!\!\!\!\!\!\thecropof{\thepartof{E_{n:}}}{y,z}
\,.
$
\end{proof}
Property~\ref{prop:bayes:extent:1} says that it is possible to build an extension of any finite Boolean algebra by constructing a conditional operator $\varphi$ as a result of a direct limit.
Now, it is necessary to show that this conditional operator is actually an algebraic implementation of the probabilistic conditioning. 
This is done by an inductive construction of the conditional probabilities.
\\[4pt]
It is assumed now that $\RsetAlt$ is an ordered field, and $\Pi$ is a $\RsetAlt$-probability distribution defined on $E_0$\,, such that $\Pi>0$\,.
The $\RsetAlt$-probability distributions, $P_{i}\in\thesetofProb{\RsetAlt}{E_{i:}}$\,, are constructed by induction for all $i\in\Nset$\,.
\paragraph{Initial construction.}
Let $P_{0}$ be defined by $P_{0}\circ\themapping{0}=\Pi$\,.
\paragraph{Inductive construction.}
It is assumed that the $\RsetAlt$-probability distributions $P_{i}$ are defined and strictely positive for all $i\le n$\,.
The $\RsetAlt$-probability distribution $P_{n+1}$ is constructed by means of the partitions:
\begin{definition}\rien
\begin{itemize}
\item If $b_{n:}\in \{\emptyset,\everTrue\}$\,, then 
$P_{n+1}=P_{n}$\,.
\item If $b_{n:}\not\in\{\emptyset,\everTrue\}$\,, then $\displaystyle
P_{n+1}\bigl(\omega\cap\varphi(\upsilon,x)\bigr)=
\frac{P_{n}(\omega)P_{n}(\upsilon)}{P_{n}(x)}
$
for all
$x\in\{b_{n:},\sim b_{n:}\}$
and
$(\omega,\upsilon) \in \thecropof{\thepartof{E_{n:}}}{\sim x,x}\,.$
\end{itemize}
\end{definition}
\begin{property}
$P_{n+1}$ is a $\RsetAlt$-probability distribution on $E_{n+1:}$ and $P_{n+1}>0$\,.
\end{property}
\begin{proof}
$P_{n+1}>0$ by definition.
Now, it is shown that the total probability is $1$\,.
The case $b_{n:}\in\{\everFalse,\everTrue\}$ being obvious, it is assumed $b_{n:}\not\in\{\everFalse,\everTrue\}$\,.
It is first deduced for $x\ne\everFalse$\,:
$$\rien\hspace{-20pt}
\sum_{(\omega,\upsilon) \in \thecropof{\thepartof{E_{n:}}}{\sim x,x}} \frac{P_{n}(\omega)P_{n}(\upsilon)}{P_{n}(x)}
=
\sum_{\omega \in \thecropof{\thepartof{E_{n:}}}{\sim x}}
P_{n}(\omega) \frac{\displaystyle\sum_{\upsilon \in \thecropof{\thepartof{E_{n:}}}{x}}P_{n}(\upsilon)}{P_{n}(x)}
=P_{n}(\sim x)\;.
$$
As a consequence:
\[
\sum_{\substack{x\in\{b_{n:},\sim b_{n:}\}\\(\omega,\upsilon) \in \thecropof{\thepartof{E_{n:}}}{\sim x,x}}} \frac{P_{n}(\omega)P_{n}(\upsilon)}{P_{n}(x)}
=1\;.
\qedhere\]
\end{proof}
\begin{property}
$P_{i}\subset P_{j}$\,, \emph{i.e.} $P_{j}(x)=P_{i}(x)$ for all $x\in E_{i:}$ and $i,j\in\Nset$ such that $i\le j$\,.
\end{property}
\begin{proof}
It is equivalent to prove this result for $j=i+1$\,.
The case $b_{i:}\in\{\everFalse,\everTrue\}$ being obvious, it is assumed $b_{i:}\not\in\{\everFalse,\everTrue\}$\,. 
\\
Let $x\in E_{i:}$\,.
Then $x=\bigcup_{\substack{y\in\{b_{i:},\sim b_{i:}\}\\
\omega\in\thecropof{\thepartof{E_{i:}}}{x\cap y}}}\omega$\,,
and:
$$
x=
\bigcup_{\substack{y\in\{b_{i:},\sim b_{i:}\}\\
\omega\in \thecropof{\thepartof{E_{i:}}}{x\cap \sim y}}}\left(\omega\cap \bigcup_{\upsilon\in \thecropof{\thepartof{E_{i:}}}{y}}\varphi(\upsilon,y)\right)=
\bigcup_{\substack{y\in\{b_{i:},\sim b_{i:}\}\\
(\omega,\upsilon)\in \thecropof{\thepartof{E_{i:}}}{x\cap \sim y,y}}}\bigl(\omega\cap\varphi(\upsilon,y)\bigr)\;.
$$
It is thus derived:
\[
\rien\hspace{-20pt}P_{i+1}(x)=
\sum_{\substack{y\in\{b_{i:},\sim b_{i:}\}\\
(\omega,\upsilon)\in \thecropof{\thepartof{E_{i:}}}{x\cap \sim y,y}}}
\frac{P_{i}(\omega)P_{i}(\upsilon)}{P_{i}(y)}
=
\sum_{\substack{y\in\{b_{i:},\sim b_{i:}\}\\
\omega\in \thecropof{\thepartof{E_{i:}}}{x\cap \sim y}}}
P_{i}(\omega)
\frac{\displaystyle\sum_{\upsilon\in \thecropof{\thepartof{E_{i:}}}{y}}P_{i}(\upsilon)
}{P_{i}(y)}
=P_{i}(x)
\;.
\qedhere\]
\end{proof}
As an immediate consequence of the previous constructions, it is deduced:
\begin{property}[probability extension]\label{finite:proba:ext:1}
The $\RsetAlt$-probability distribution, $P\in\thesetofProb{\RsetAlt}{E}$ defined by
$P(x)=P_{i}(x)$ for all $i\in\Nset$ and $x\in E_{i:}$\,, verifies:
\begin{align}
\label{prob:proj:property:1}&P\circ\themapping{0}=\Pi\;,\\
\label{prob:prod:property:1}&
P\bigl(\sim x\cap y\cap\varphi(x\cap y,x)\bigr)=
\frac{P(\sim x\cap y)P(x\cap y)}{P(x)}\;,\mbox{ for all }y\in E\mbox{ and }x\in E\setminus\{\everFalse,\everTrue\}\,.
\end{align}
\end{property}
In regards to the following lemma, we have actually shown that $\varphi$ is an algebraic implementation of the probabilistic conditioning, at least for \emph{strictely positive} distributions.
\begin{lemma}\label{lemma:for:theorem:proof:1}
Assume $P\in\thesetofProb{\RsetAlt}{E}$ and let $x,y\in E$\,.
Then:
$$
P(x)P\bigl(\sim x\cap y\cap\varphi(x\cap y,x)\bigr)=
P(\sim x\cap y)P(x\cap y)
\mbox{ \ implies \ }
P(x)P\bigl(\varphi(y,x)\bigr)=P(x\cap y)\;.
$$
\end{lemma}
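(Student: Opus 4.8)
The plan is to strip off the degenerate values of $x$, simplify the hypothesis using the stated properties of $\varphi$, and then reduce the target equality to a single probabilistic identity about the ``complementary part'' of $\varphi(y,x)$. First I would dispose of the trivial cases. If $x=\everFalse$, then $P(x)=0$ and $x\cap y=\everFalse$, so both sides of the conclusion vanish; if $x=\everTrue$, then property~(\ref{eq:infer:1}) gives $\everTrue\cap\varphi(y,\everTrue)=\everTrue\cap y$, i.e. $\varphi(y,\everTrue)=y$, whence $P(x)P(\varphi(y,x))=P(y)=P(x\cap y)$. In both cases the conclusion holds unconditionally, so from now on I assume $x\notin\{\everFalse,\everTrue\}$. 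Since $x\subset x$ and $x\ne\everFalse$, property~(\ref{eq:necess:1}) yields $\varphi(x,x)=\everTrue$, and as $z\mapsto\varphi(z,x)$ is a Boolean morphism by~(\ref{eq:bool:1}) I get $\varphi(x\cap y,x)=\varphi(x,x)\cap\varphi(y,x)=\varphi(y,x)$. Thus the hypothesis becomes
$$P(x)\,P\bigl(\sim x\cap y\cap\varphi(y,x)\bigr)=P(\sim x\cap y)\,P(x\cap y)\,.\qquad(\mathrm{H})$$

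Next I would reformulate the conclusion. Using~(\ref{eq:bool:1}) to split $\varphi(y,x)$ over $\{x,\sim x\}$ and~(\ref{eq:infer:1}) on the $x$-part gives the disjoint decomposition $\varphi(y,x)=(x\cap y)\cup(\sim x\cap\varphi(y,x))$, so by finite additivity $P(\varphi(y,x))=P(x\cap y)+P(\sim x\cap\varphi(y,x))$. Since $P(x)+P(\sim x)=1$, the desired equality $P(x)P(\varphi(y,x))=P(x\cap y)$ is \emph{equivalent} to
$$P(x)\,P\bigl(\sim x\cap\varphi(y,x)\bigr)=P(\sim x)\,P(x\cap y)\,.\qquad(\mathrm{C})$$
This reformulation is the useful payoff of the two properties~(\ref{eq:bool:1}) and~(\ref{eq:infer:1}): it trades the multiplicative conditional relation for a statement purely about the mass of $\varphi(y,x)$ on $\sim x$.

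Finally I would locate the crux. Splitting $\sim x\cap\varphi(y,x)$ along $y$ and $\sim y$ gives the disjoint union $\sim x\cap\varphi(y,x)=\bigl(\sim x\cap y\cap\varphi(y,x)\bigr)\cup\bigl(\sim x\cap\sim y\cap\varphi(y,x)\bigr)$, whose first piece is exactly the quantity controlled by~(\mathrm{H}). Substituting~(\mathrm{H}) into~(\mathrm{C}) shows that~(\mathrm{C}) is equivalent to
$$P(x)\,P\bigl(\sim x\cap\sim y\cap\varphi(y,x)\bigr)=P(\sim x\cap\sim y)\,P(x\cap y)\,.\qquad(\dagger)$$
The hard part will be $(\dagger)$, and this is where I expect the main obstacle to lie. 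Note that $(\dagger)$ is precisely the product relation of the hypothesis evaluated at the \emph{biconditional} element $u=(x\cap y)\cup(\sim x\cap\sim y)$: one checks $x\cap u=x\cap y$ and $\sim x\cap u=\sim x\cap\sim y$, hence $\varphi(x\cap u,x)=\varphi(y,x)$, and the hypothesis relation written for $u$ is literally $(\dagger)$. To extract $(\dagger)$ from the single instance~(\mathrm{H}) I would try to exploit the independence property~(\ref{eq:indep:1}), $\varphi(\varphi(y,x),\sim x)=\varphi(y,x)$, which asserts algebraically that $\sim x\cap\varphi(y,x)$ behaves as a copy of the conditioned $y$ detached from the events living on $\sim x$; turning this algebraic independence into the probabilistic identity $(\dagger)$ is the delicate step, because~(\mathrm{H}) by itself pins down only how that copy interacts with $y$ and not how it interacts with $\sim y$.
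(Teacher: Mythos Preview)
Your analysis is almost complete, but you have talked yourself into a difficulty that is not there. The hypothesis of the lemma is meant to be read universally in $y$: in the paper it is applied immediately after property~\ref{finite:proba:ext:1}, where relation~(\ref{prob:prod:property:1}) holds for \emph{all} $y\in E$, and indeed the paper's own proof of this lemma silently invokes the hypothesis at an element different from the given~$y$. Once you allow yourself that, your argument is finished: you already observed that $(\dagger)$ is exactly the hypothesis evaluated at $u=(x\cap y)\cup(\sim x\cap\sim y)$, and adding $(\mathrm{H})$ and $(\dagger)$ gives $(\mathrm{C})$, which you have shown is equivalent to the conclusion. There is no ``delicate step'' requiring~(\ref{eq:indep:1}); that property plays no role here.

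The paper's route is the same idea with a tidier substitution. Instead of splitting $\sim x\cap\varphi(y,x)$ along $y$ and $\sim y$ and then needing two instances of the hypothesis (at $y$ and at your biconditional $u$), the paper writes $\varphi(y,x)=\varphi\bigl(x\cap(\sim x\cup y),x\bigr)$ and applies the hypothesis once, to $y'=\sim x\cup y$. Since $\sim x\cap y'=\sim x$ and $x\cap y'=x\cap y$, that single instance reads
\[
P(x)\,P\bigl(\sim x\cap\varphi(y,x)\bigr)=P(\sim x)\,P(x\cap y),
\]
which is precisely your reformulated target $(\mathrm{C})$. So the paper skips your intermediate decomposition by choosing the substitute element so that the ``$\sim x\cap y'$'' factor becomes all of $\sim x$ at once. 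Both arguments are correct; the paper's is one line shorter.
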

\begin{proof}
Let $x,y\in E$\,.
Since $P(\everFalse)=0$\,, it is deduced:
\begin{align*}
&P(x)P\bigl(\varphi(y,x)\bigr)=P(x)P\Bigl(\varphi\bigl(x\cap(\sim x\cup y),x\bigr)\Bigr)=P(x)\Bigl(
P\Bigl(x\cap\varphi\bigl(x\cap (\sim x\cup y),x\bigr)\Bigr)
\\&
\rien\qquad\qquad\qquad\qquad
+P\Bigl(\sim x\cap(\sim x\cup y)\cap\varphi\bigl(x\cap (\sim x\cup y),x\bigr)\Bigr)
\Bigr)
=P(x)P(x\cap y)
\\&
\rien\qquad\qquad\qquad\qquad
+
P\bigl(\sim x\cap(\sim x\cup y)\bigr)P\bigl(x\cap (\sim x\cup y)\bigr)
=P(x\cap y)\;.
\qedhere\end{align*}
\end{proof}
\subsubsection{General case}
Now, the previous result is generalized to any (possibly non finite) Boolean algebra $E_0$.
\\[4pt]
Let $\thesetofFiniteSubboolean{E_0}$ the set of \emph{finite} Boolean subalgebra of $E_0$.
It is noticed that $(\thesetofFiniteSubboolean{E_0},\subset)$ is a directed set.
By applying the previous construction, there are for each $F\in\thesetofFiniteSubboolean{E_0}$ a countable Boolean algebra $\thebayesextent{F}$\,, an injective Boolean morphism $\themapping{0,F}:F\rightarrow \thebayesextent{F}$ and an operator $\varphi_{F}:\thebayesextent{F}\times \thebayesextent{F}\rightarrow \thebayesextent{F}$\,, which verify the properties~\ref{prop:bayes:extent:1} and~\ref{finite:proba:ext:1}.
Then, it is defined, for all $F,G\in\thesetofFiniteSubboolean{E_0}$ such that $F\subset G$\,, the mappings $\themapping{F,G}:\thebayesextent{F}\rightarrow \thebayesextent{G}$ by induction:
$$\left\{\begin{array}{l@{}}\displaystyle
\themapping{F,G}\circ\themapping{0,F}=\themapping{0,G}\;,
\vspace{4pt}\\
\themapping{F,G}(x\cap y)=
\themapping{F,G}(x)\cap\themapping{F,G}(y)
\mbox{ \ and \ }
\themapping{F,G}(\sim x)=\sim \themapping{F,G}(x)\;,
\vspace{4pt}\\
\themapping{F,G}\bigl(\varphi_F(x, y)\bigr)=
\varphi_G\bigl(\themapping{F,G}(x),\themapping{F,G}(y)\bigr)\;.
\end{array}\right.$$
The mappings $\themapping{F,G}$ are Boolean morphisms,\footnote{These morphisms are also injective, thanks to property~\ref{gen:part:prop:1}.} such that $\themapping{G,H}\circ\themapping{F,G}=\themapping{F,G}$ and $\themapping{F,F}=\theid{\thebayesextent{F}}$ for all $F\le G\le H$\,.
As a consequence, $\bigl(\thebayesextent{F},\themapping{F,G}\bigr)_{\substack{F,G\in\thesetofFiniteSubboolean{E_0}\\F\subset G}}$ is a direct system.
\\[4pt]
Let $E=\directlim \thebayesextent{F}$ and $\themapping{F}:\thebayesextent{F}\rightarrow E$ be the canonical Boolean morphism defined for all $F\in\thesetofFiniteSubboolean{E_0}$.
Then $\themapping{F}\circ\themapping{0,F}\subset\themapping{G}\circ\themapping{0,G}$\,, for all $F,G\in \thesetofFiniteSubboolean{E_0}$ such that $F\subset G$\,.
Define the Boolean morphism $\themapping{0}:E_0 \rightarrow E$ by $\themapping{0}=\bigcup_{F\in \thesetofFiniteSubboolean{E_0}}\themapping{0,F}$.
Since $\themapping{0,F}$ is injective for all $F\in\thesetofFiniteSubboolean{E_0}$\,, it is deduced by construction that 
$\themapping{0}$ is injective.
\\[4pt]
By construction, it is also true that $\themapping{F}\circ\varphi_F=\themapping{G}\circ\varphi_G\circ\thejointmapping{F,G}$ for all $F,G\in \thesetofFiniteSubboolean{E_0}$ such that $F\subset G$\,.
Define the mapping $\varphi: E\times E\rightarrow E$ by $\varphi\circ\thejointmapping{F}=\themapping{F}\circ\varphi_F$ for all $F\in \thesetofFiniteSubboolean{E_0}$\,.
Then $\varphi$ inherits the characteristics of the mappings $\varphi_F$\,, \emph{i.e.} (\ref{eq:bool:1}), (\ref{eq:necess:1}), (\ref{eq:infer:1}) and (\ref{eq:indep:1}).
\\[4pt]
At last, let $\RsetAlt$ be an ordered field and let $\Pi\in\thesetofProb{\RsetAlt}{E_0}$ be \emph{strictely positive.}
By construction, there is for all $F\in \thesetofFiniteSubboolean{E_0}$ a distribution $P_{F}\in\thesetofProb{\RsetAlt}{\thebayesextent{F}}$ defined by $P_{F}\circ\themapping{0,F}=\Pi|_{F}$
and
$\displaystyle P_{F}\bigl(\sim x\cap y\cap\varphi_F(x\cap y,x)\bigr)=
\frac{P_{F}(\sim x\cap y)P_{F}(x\cap y)}{P_{F}(x)}$
for all
$y\in \thebayesextent{F}$
and
$x\in \thebayesextent{F}\setminus\{\everFalse,\everTrue\}$\,.
Since the definitions are the same modulo a morphism, it is deduced:
$$
P_{G}\circ\themapping{F,G}=P_{F} \mbox{ \ for all }F,G\in\thesetofFiniteSubboolean{E_0}\mbox{ such that }F\subset G\;.
$$
Then, it is defined $P\in\thesetofProb{\RsetAlt}{E}$ by setting $P\circ\themapping{F}=P_{F}$\,, for all $F\in \thesetofFiniteSubboolean{E_0}$\,.
This distribution $P$ inherits the characteristics of $P_{F}$\,, especially the properties~(\ref{prob:proj:property:1}) and~(\ref{prob:prod:property:1}).
\\[4pt]
Compiling all the previous construction together with lemma~\ref{lemma:for:theorem:proof:1} and the fact that $\thebayesextent{F}$ is countable for all $F\in \thesetofFiniteSubboolean{E_0}$, the following proposition is derived.
\begin{property}[extension of a Boolean algebra] \label{probability:extension:prop:3}
For all Boolean algebra $E_0$\,, there is a Boolean algebra $E$\,, an injective Boolean morphism $\themapping{}:E_0\rightarrow E$ and an operator $\varphi:E\times E\rightarrow E$ such that:
\begin{itemize}
\item $\thecardof{E}=\max\{\thecardof{E_0},\thecardof{\Nset}\}$\,,
\item $\varphi$ verifies the properties (\ref{eq:bool:1}), (\ref{eq:necess:1}), (\ref{eq:infer:1}) and (\ref{eq:indep:1}),
\item Given any ordered field $\RsetAlt$ and any strictely positive $\Pi\in\thesetofProb{\RsetAlt}{E_0}$\,, there is $P\in\thesetofProb{\RsetAlt}{E}$ such that $P\circ\themapping{}=\Pi$ and $P(x)P\bigl(\varphi(y,x)\bigr)=P(x\cap y)$ for all $x,y\in E_0$\,.
\end{itemize}
\end{property}
It is time now for spelling the main theorem.
\subsection{Main theorem}\label{sect:main:theorem}
\subsubsection{Definitions of Bayesian algebras}
\begin{definition}[Bayesian algebra]
The septuple $(E,\cap,\cup,\sim,\everFalse,\everTrue,[\;])$ is called a \emph{Bayesian algebra} if $(E,\cap,\cup,\sim,\everFalse,\everTrue)$ is a Boolean algebra and the operator $[\;]$ is such that:
\begin{description}
\item[\thepropBool:] The conditional mapping $\theCondMapping{x}:z\mapsto [x]z$ is a Boolean automorphism of $E$\,,
\item[\thepropDef:] $x\subset y$ implies $[x]y=\everTrue$ or $x=\everFalse$\,,
\item[\thepropInf:] $x\cap[x]y=x\cap y$\,,
\item[\thepropInd:] $[x][x]y=[\sim x][x]y=[x]y$\,,
\end{description}
for all $x,y\in E$\,.
\end{definition}
\begin{definition}[Bayesian morphism]
Let $(E,\cap,\cup,\sim,\everFalse,\everTrue,[\;])$ and $(F,\cap,\cup,\sim,\everFalse,\everTrue,[\;])$ be two Bayesian algebras.
A mapping $\themapping{}:E\rightarrow F$ is a Bayesian morphism if $\themapping{}$ is a Boolean morphism and $\themapping{}\bigl([x]y\bigr)=\bigl[\themapping{}(x)\bigr]\themapping{}\bigl(y)$
for all $x,y\in E$\,.
\end{definition}
\paragraph{The intuition behind.}
It is not difficult to understand the characteristic {\thepropBool}\,, which implies a Boolean behavior of the conditioned proposition; it is related to the fact that conditional probabilities are actually probabilities.
The characteristic {\thepropInf} deals with the fact that a conditional proposition is a kind of inference; for example, $P(y|x)=1$ and $P(x)=1$ imply $P(x\cap y)=1$\,.
The characteristic {\thepropDef} deals with definition of a conditional proposition; it is related to the fact that $x\subset y$ and $P(x)>0$ imply $P(y|x)>0$\,, the case $P(x)=0$ being \emph{undefined}.
The characteristic {\thepropInd} means that $[x]y$ is (logically) independent of $x$ and of $\sim x$.
It could be compared to the definition of the probabilistic conditioning, which may be rewritten $P\bigl([x]y\bigr)P(x)=P(x\cap [x]y)$\,, and to its corollary $P\bigl([x]y\bigr)P(\sim x)=P(\sim x\cap [x]y)$\,.
\begin{definition}
A mapping $P:E\longrightarrow \RsetAlt$ is a $\RsetAlt$-probability distribution on a Bayesian algebra $E$ if $P\in\thesetofProb{\RsetAlt}{E}$ and $P(x\cap y)=P\bigl([x]y\bigr)P(x)$ for all $x,y\in E$\,.
\end{definition}
\begin{notation}
The set of $\RsetAlt$-probability distributions defined on a Bayesian algebra $E$ is denoted $\thesetofBayesProb{\RsetAlt}{E}$\,.
\end{notation}
\noindent While a Bayesian algebra~$E$ is Boolean, a $\RsetAlt$-probability distribution on the Boolean algebra~$E$ is not necessary a $\RsetAlt$-probability distribution on the \emph{Bayesian} algebra~$E$.
This explains how Lewis' triviality is avoided (by definition) by a Bayesian extension.
\begin{definition}[Bayesian extension]
Let $(B_{oole},\cap,\cup,\sim,\everFalse,\everTrue)$ be a Boolean algebra.
A Bayesian algebra $(B_{ayes},\cap,\cup,\sim,\everFalse,\everTrue,[\;])$ is a Bayesian extension of $B_{oole}$ if:
\begin{itemize}
\item There is an injective Boolean morphism $\themapping{}:B_{oole}\rightarrow B_{ayes}$\,,
\item Given any ordered field $\RsetAlt$ and any $P_{oole}\in\thesetofProb{\RsetAlt}{B_{oole}}$\,, there is $P_{ayes}\in \thesetofBayesProb{\RsetAlt}{B_{ayes}}$ such that $P_{ayes}\circ\themapping{} = P_{oole}$\,.
\end{itemize}
\end{definition}
\subsubsection{Main theorem}
\begin{theorem}[Main theorem]
\label{extension:weak:finite:th:1}
All Boolean algebras, $(B_{oole},\cap,\cup,\sim,\everFalse,\everTrue)$\,, have a Bayesian extension, $(B_{ayes},\cap,\cup,\sim,\everFalse,\everTrue,[\;])$\,, such that
$\thecardof{B_{ayes}}=\max\{\thecardof{B_{oole}},\thecardof{\Nset}\}$\,.
\end{theorem}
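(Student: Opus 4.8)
The strategy is to obtain $B_{ayes}$ essentially for free from Property~\ref{probability:extension:prop:3}, and then to remove the one restriction that blocks a direct application: Property~\ref{probability:extension:prop:3} extends a probability only from a \emph{strictly positive} starting distribution, whereas a Bayesian extension must accommodate every $P_{oole}\in\thesetofProb{\RsetAlt}{B_{oole}}$ over every ordered field $\RsetAlt$.

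First I apply Property~\ref{probability:extension:prop:3} to $B_{oole}$, obtaining a Boolean algebra $B_{ayes}:=E$, an injective Boolean morphism $\themapping{}:B_{oole}\rightarrow B_{ayes}$ with $\thecardof{B_{ayes}}=\max\{\thecardof{B_{oole}},\thecardof{\Nset}\}$, and an operator $\varphi$ satisfying (\ref{eq:bool:1})--(\ref{eq:indep:1}). Setting $[x]z:=\varphi(z,x)$ turns (\ref{eq:bool:1}), (\ref{eq:necess:1}), (\ref{eq:infer:1}), (\ref{eq:indep:1}) verbatim into the axioms \thepropBool, \thepropDef, \thepropInf, \thepropInd; hence $(B_{ayes},\cap,\cup,\sim,\everFalse,\everTrue,[\;])$ is a Bayesian algebra of the asserted cardinality. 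Only the probability clause of ``Bayesian extension'' remains.

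So fix an ordered field $\RsetAlt$ and $P_{oole}\in\thesetofProb{\RsetAlt}{B_{oole}}$ (which need not be strictly positive, and $B_{oole}$ may even admit no strictly positive $\RsetAlt$-distribution at all). I pass to the Hahn-series field $\RsetAlt':=\theHahnSeriesField{\RsetAlt}{\thegroup{B_{oole}}{\RsetAlt}}$ furnished by Main lemma~\ref{mainlemma:1}, and use Corollary~\ref{maincor:1} to produce a \emph{strictly positive} $\Pi\in\thesetofProb{\RsetAlt'}{B_{oole}}$ with $\projFromHahn{\RsetAlt}{\thegroupAlone}\Pi=P_{oole}$. Now Property~\ref{probability:extension:prop:3}, applied to the ordered field $\RsetAlt'$ and the strictly positive $\Pi$, yields $P'\in\thesetofProb{\RsetAlt'}{B_{ayes}}$ with $P'\circ\themapping{}=\Pi$ and $P'(x\cap y)=P'\bigl([x]y\bigr)P'(x)$; here I would note that although this identity is stated on $\themapping{}(B_{oole})$, it in fact holds for all $x,y\in B_{ayes}$ by the construction behind property~\ref{finite:proba:ext:1} together with Lemma~\ref{lemma:for:theorem:proof:1} (plus the trivial cases $x\in\{\everFalse,\everTrue\}$), so that $P'\in\thesetofBayesProb{\RsetAlt'}{B_{ayes}}$.

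Finally set $P_{ayes}:=\projFromHahn{\RsetAlt}{\thegroupAlone}\circ P'$. By the corollary to Property~\ref{pos:cone:prop:1} this is an $\RsetAlt$-probability on $B_{ayes}$, and $P_{ayes}\circ\themapping{}=\projFromHahn{\RsetAlt}{\thegroupAlone}\Pi=P_{oole}$. Since $\projFromHahn{\RsetAlt}{\thegroupAlone}$ is a ring morphism (Property~\ref{pos:cone:prop:1}), applying it to $P'(x\cap y)=P'\bigl([x]y\bigr)P'(x)$ gives $P_{ayes}(x\cap y)=P_{ayes}\bigl([x]y\bigr)P_{ayes}(x)$ for all $x,y\in B_{ayes}$, i.e. $P_{ayes}\in\thesetofBayesProb{\RsetAlt}{B_{ayes}}$. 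Hence $B_{ayes}$ is a Bayesian extension of $B_{oole}$ with $\thecardof{B_{ayes}}=\max\{\thecardof{B_{oole}},\thecardof{\Nset}\}$, which proves the theorem. The main obstacle is precisely the strict-positivity hypothesis in Property~\ref{probability:extension:prop:3}: a genuinely extra idea (enlarging the value field by Hahn series, then pushing the resulting distribution back down through $\projFromHahn{\RsetAlt}{\thegroupAlone}$) is needed to reach arbitrary distributions; the only secondary point requiring care is that the conditional multiplicative identity be available on all of $B_{ayes}$, not merely on $\themapping{}(B_{oole})$, so that it survives that projection.
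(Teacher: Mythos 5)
Your proposal is correct and follows essentially the same route as the paper: obtain the Bayesian algebra and the strictly-positive case from Property~\ref{probability:extension:prop:3}, lift an arbitrary $P_{oole}$ to a strictly positive Hahn-series-valued distribution via Lemma~\ref{mainlemma:1} and Corollary~\ref{maincor:1}, extend, and project back down with $\projFromHahn{\RsetAlt}{\thegroupAlone}$ using Property~\ref{pos:cone:prop:1}. Your explicit remark that the multiplicative identity must hold on all of $B_{ayes}$ (not just on $\themapping{}(B_{oole})$ as literally stated in Property~\ref{probability:extension:prop:3}) addresses a point the paper's own proof passes over silently, but it is a refinement of the same argument, not a different one.
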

\begin{proof}
By applying property~\ref{probability:extension:prop:3}\,, there is a Bayesian algebra $B_{ayes}$ and an injective Boolean morphism $\themapping{}:B_{oole}\rightarrow B_{ayes}$ such that $\thecardof{B_{ayes}}=\max\{\thecardof{B_{oole}},\thecardof{\Nset}\}$ and:
\begin{equation}\label{eq:proof:main:th:1}\begin{array}{@{}l@{}}
\mbox{Given any ordered field $\RsetAlt$ and any strictly positive $\Pi\in\thesetofProb{\RsetAlt}{B_{oole}}$\,,}
\\\mbox{there is $P\in \thesetofBayesProb{\RsetAlt}{B_{ayes}}$ such that $P\circ\themapping{} = \Pi$\,.}
\end{array}\end{equation}
Now, let $\RsetAlt$ be an ordered field and let $P_{oole}\in\thesetofProb{\RsetAlt}{B_{oole}}$.
By main lemma~\ref{mainlemma:1} and corollary~\ref{maincor:1}, there is an ordered abelian group $\thegroupAlone$ and a strictely positive $\theHahnSeriesField{\RsetAlt}{\thegroupAlone}$-probability distribution $\Pi$ such that $P_{oole}=\projFromHahn{\RsetAlt}{\thegroupAlone}\Pi$\,.
By~(\ref{eq:proof:main:th:1}), there is $P\in \thesetofBayesProb{\theHahnSeriesField{\RsetAlt}{\thegroupAlone}}{B_{ayes}}$ such that $P\circ\themapping{} = \Pi$\,.
Then, $\projFromHahn{\RsetAlt}{\thegroupAlone}P\in\thesetofBayesProb{\RsetAlt}{B_{ayes}}$ and $\bigl(\projFromHahn{\RsetAlt}{\thegroupAlone}P\bigr)\circ \themapping{}=P_{oole}$ by property~\ref{pos:cone:prop:1}.
\end{proof}
The introductive theorem~\ref{main:theorem:1} is an instance of main theorem in the case $\RsetAlt=\Rset$\,.
Next section applies the notion of Bayesian algebra to the domain of logic.
\section{Toward a Deterministic Bayesian Logic}
\label{sect:logic}
It is known that Boolean algebra are the models for classical propositional logic.
Similarly, the Bayesian algebras have a logical interpretation.
In this section, the \emph{Deterministic Bayesian Logic} (DBL) is introduced concisely as a logical abstraction of Bayesian algebras.
Since Bayesian algebras are Boolean algebras, this logic also implements classical logical operators.
But in addition, DBL implements a Bayesian operator, while being bivalent.
This logic has been introduced in previous works~\cite{dmbl2007}.

\subsection{Language of Deterministic Bayesian Logic}
It is defined $\mathcal{P}$\,, a set of atomic propositions.
\begin{definition}
The set $\thesetofCondProp$ of conditional propositions is defined inductively by:
\begin{enumerate}
\item \label{def:logic:step0} $\bot\in\thesetofCondProp$ and $\mathcal{P}\subset\thesetofCondProp$\,,
\item \label{def:logic:step1} $X\rightarrow Y \in\thesetofCondProp$ for all $X,Y\in\thesetofCondProp$\,,
\item $[X] Y \in\thesetofCondProp$ for all $X,Y\in\thesetofCondProp$\,.
\end{enumerate}
\end{definition}
The set of classical propositions, $\thesetofProp\subset \thesetofCondProp$\,, is defined inductively by step~\ref{def:logic:step0} and~\ref{def:logic:step1}.
\begin{definition}
The set $\thesetofDBLProp$ of Bayesian propositions is defined by:
\[
\lfloor X_1|\cdots|X_n\rfloor \in\thesetofDBLProp\mbox{ for all }X_{1:n}\in\thesetofCondProp\;.
\]
\end{definition}
\begin{notation}
Are defined 
\mbox{$\neg X\stackrel{\Delta}{=}X\rightarrow\bot\,,$}
\mbox{$X\vee Y\stackrel{\Delta}{=}\neg X\rightarrow Y\,,$}
\mbox{$X\wedge Y\stackrel{\Delta}{=}\neg (\neg X\vee \neg Y)\,,$}
\mbox{$\top\stackrel{\Delta}{=}\neg\bot$}
and
\mbox{$X\leftrightarrow Y\stackrel{\Delta}{=} (X\rightarrow Y)\wedge (Y\rightarrow X)\,.$}
The Greek uppercase letters $\Gamma,\Delta,\Lambda,\Pi$ are notations for sequences of propositions like $X_1|\cdots|X_n$ (without $\lfloor\ \rfloor$).
\end{notation}
\paragraph{Explanation of language format.}
$\bot$ and $\top$ are respectively the ever-false and ever-true propositions.
$\rightarrow$, $\neg$, $\vee$ and $\wedge$ are respectively the \emph{classical inference}, the \emph{negation}, the \emph{disjunction} and the \emph{conjunction}. 
$[\;]$ is the \emph{conditional modality.}
The delimiters $\lfloor\ |\ \rfloor$ are interpreted as meta-disjunc\-tions:
\begin{quote}
A proposition of the form $\lfloor X \rfloor$ is interpreted as \emph{($X$ is ever-true)},
while $\lfloor X | Y | Z \rfloor$ is interpreted as \emph{($X$ is ever-true) OR ($Y$ is ever-true) OR ($Z$ is ever-true)}\,. 
\end{quote}
\subsection{Semantic of Deterministic Bayesian Logic}
\newtheorem{deduction}{Deduction}
We start the construction of DBL by first defining its semantic, that is by defining an evaluation function of the truthness of the Bayesian propositions.
\begin{definition}[conditional valuation]
Let $(E,\cup,\cap,\sim,\everFalse,\everTrue,[\;])$ be a Bayesian algebra.
A valuation of $\thesetofCondProp$ in $E$ is a mapping $H_{E}:\thesetofCondProp\rightarrow E$ verifying for all $X,Y\in\thesetofCondProp$\,:
\begin{itemize}
\item $H_{E}(\bot)=\everFalse
\mbox{ \ and \ }
H_{E}(X\rightarrow Y)=\bigl(E\setminus H_{E}(X)\bigr)\cup H_{E}(Y)$\,,
\item $H_{E}([X]Y)=\bigl[H_{E}(X)\bigr]H_{E}(Y)\,.$
\end{itemize}
\end{definition}
From these valuation, it is possible to characterize the semantic of DBL.
\begin{definition}[validity in DBL]
Subsequently, $E$ is a Bayesian algebra.
\begin{itemize}
\item A proposition $\lfloor X_1|\cdots|X_n\rfloor\in\thesetofDBLProp$ is valid according to valuation $H_{E}$, denoted $H_{E}\vDash \lfloor X_1|\cdots|X_n\rfloor$\,, if there is $i\in\{1:n\}$ such that $H_{E}(X_i)=E$\,,
\item A proposition $\lfloor \Gamma\rfloor\in\thesetofDBLProp$ is valid in $E$, denoted $E\vDash \lfloor \Gamma\rfloor$\,, if $H_{E}\vDash \lfloor \Gamma\rfloor$ for all valuation $H_{E}$\,,
\item A proposition $\lfloor \Gamma\rfloor\in\thesetofDBLProp$ is valid in DBL, denoted $\vDash \lfloor \Gamma\rfloor$\,, if $E\vDash \lfloor \Gamma\rfloor$ for all Bayesian algebra $E$\,.
\end{itemize}
\end{definition}
\begin{definition}[semantic independence]\label{hdr:indep:semantique:1}
$Y$ is semantically independent (\emph{or} free) of $X$ if $\vDash \lfloor[X]Y\leftrightarrow Y\rfloor$\,.
\end{definition}
The following deductions are almost immediate from the definition of Bayesian algebras.
\begin{deduction}\label{hdr:prop:sys:axiom}Axiomatic rewriting of the definition of Bayesian algebras:
\begin{align*}
{\theAxInfIIcond}:&\ 
\mbox{If \ }\vDash \lfloor\Gamma|X\rightarrow Y\rfloor\;, \mbox{ \ then \ }\vDash \bigl\lfloor \Gamma \big| \neg X \big| [X]Y \bigr\rfloor\;,\\
{\theAxK}:&\ \vDash \bigl\lfloor [X](Y\rightarrow Z)\rightarrow \bigl([X]Y\rightarrow [X]Z\bigr)\bigr\rfloor\;,\\
{\theAxCondIIinf}:&\ \vDash \bigl\lfloor [X]Y\rightarrow (X\rightarrow Y)\bigr\rfloor\;,\\
{\theAxNeg}:&\ \vDash \bigl\lfloor [X]\neg Y\leftrightarrow \neg [X]Y\bigr\rfloor\;,\\
{\theAxInd}:&\ \mbox{If \ }\vDash \lfloor \Gamma|Y\leftrightarrow \neg X\rfloor\mbox{ \ and \ }\vDash \lfloor \Gamma|[X]Z\leftrightarrow Z\rfloor\;, \mbox{ \ then \ }\vDash \bigl\lfloor \Gamma\big|[Y]Z\leftrightarrow Z\bigr\rfloor\;.
\end{align*}
\end{deduction}
\begin{deduction}\label{hdr:prop:fullempty:univ}
All propositions are (semantically) independent of the ever-true and the ever-false propositions:
If $\vDash \lfloor\Gamma|X|\neg X\rfloor$\,, then $\vDash \lfloor\Gamma|[X]Y\leftrightarrow Y\rfloor$\,.
In particular, $\vDash \lfloor[\top]Y\leftrightarrow Y\rfloor$ et $\vDash \lfloor[\bot]Y\leftrightarrow Y\rfloor$\,.
\end{deduction}
\begin{deduction}\label{hdr:prop:cont:taut}
The ever-true and the ever-false propositions are independent of all propositions:
(a) If $\vDash \lfloor\Gamma|Y\rfloor$\,, then $\vDash \lfloor\Gamma|[X]Y\rfloor$\,.
(b) If $\vDash \lfloor\Gamma|\neg Y\rfloor$\,, then $\vDash \lfloor\Gamma|\neg[X]Y\rfloor$\,.
In particular, $\vDash \lfloor\Gamma|[X]\top\rfloor$ and $\vDash \lfloor\Gamma|\neg[X]\bot\rfloor$\,.
\end{deduction}
\begin{deduction}\label{hdr:prop:independence}
The semantic independence implies factorisations of logical equations.
\begin{align}
\label{indep:sem:log:aq:1}
&\vDash \bigl\lfloor\neg X\big|[X]X\bigr\rfloor\;.
\mbox{In particular, if }\vDash \bigl\lfloor\Gamma\big|[X]X\leftrightarrow X\bigr\rfloor
\mbox{ then }
\vDash \lfloor\Gamma|\neg X|X\rfloor\;,
\\
\label{indep:sem:log:aq:1:2}
&\mbox{If }\vDash \bigl\lfloor\Gamma\big|[X]Y\leftrightarrow Y\bigr\rfloor
\mbox{ and }
\vDash \lfloor\Gamma|X\rightarrow Y\rfloor
\mbox{ then }
\vDash \lfloor\Gamma|\neg X|Y\rfloor\;,
\\
\label{indep:sem:log:aq:2}
&\mbox{If }\vDash \bigl\lfloor\Gamma\big|[X]Y\leftrightarrow Y\bigr\rfloor
\mbox{ and }
\vDash \lfloor\Gamma|X\vee Y\rfloor
\mbox{ then }
\vDash \lfloor\Gamma|X|Y\rfloor\;,
\\
\label{indep:sem:log:aq:3}
&\begin{array}{@{}l@{}}\mbox{If }
\vDash \bigl\lfloor\Gamma\big|(X\wedge Y)\rightarrow(X\wedge Z)\bigr\rfloor\;,\ 
\vDash \bigl\lfloor\Gamma|[X]Y\leftrightarrow Y\bigr\rfloor
\\
\rien\hspace{160pt}
\mbox{ and }
\vDash \bigl\lfloor\Gamma|[X]Z\leftrightarrow Z\bigr\rfloor
\mbox{ then }
\vDash \lfloor\Gamma|\neg X|Y\rightarrow Z\rfloor\;.
\end{array}
\end{align}
\end{deduction}
The proof system of DBL is based on the axioms defined in deduction~\ref{hdr:prop:sys:axiom}. 
\subsection{Axioms and rules of Deterministic Bayesian Logic}
\begin{definition}
The axioms and rules of DBL are:
\begin{description}
\item[Classical axioms and rules]\rien
\begin{description}
\item[C:] Axioms of propositional logic,
\item[MP:] modus ponens.
$\lfloor \Gamma|X\rfloor$ and $\lfloor \Delta|X\rightarrow Y\rfloor$ imply $\lfloor \Gamma|\Delta|Y\rfloor$\,,
\end{description}
\item[Meta-rules]\rien
\begin{description}
\item[\metaP:] Meta-permutation. 
Let $\sigma$ be a permutation of $\{1:n\}$.
Then, $\lfloor X_1|\cdots|X_n\rfloor$ implies $\lfloor X_{\sigma(1)}|\cdots|X_{\sigma(n)}\rfloor$\,,
\item[\metaC:] Meta-contraction. 
$\lfloor \Gamma|X|X\rfloor$ implies $\lfloor \Gamma|X\rfloor$\,,
\item[\metaW:] Meta-weakening. 
$\lfloor \Gamma\rfloor$ implies $\lfloor \Gamma|X\rfloor$\,,
\end{description}
\item[Condditional axioms and rules]\rien
\begin{description}
\item[\theAxInfIIcond:]
$\lfloor \Gamma|X\rightarrow Y\rfloor$ implies $\bigl\lfloor \Gamma \big| \neg X \big| [X]Y \bigr\rfloor$\,,
\item[\theAxK:]  
$\bigl\lfloor [X](Y\rightarrow Z)\rightarrow \bigl([X]Y\rightarrow [X]Z\bigr)\bigr\rfloor$\,,
\item[\theAxCondIIinf:]
$\bigl\lfloor [X]Y\rightarrow (X\rightarrow Y)\bigr\rfloor$\,,
\item[\theAxNeg:] 
$\bigl\lfloor [X]\neg Y\leftrightarrow \neg [X]Y\bigr\rfloor$\,,
\item[\theAxInd:] 
$\lfloor \Gamma|Y\leftrightarrow \neg X\rfloor$ and $\lfloor \Gamma|[X]Z\leftrightarrow Z\rfloor$ imply $\bigl\lfloor\Gamma\big|[Y]Z\leftrightarrow Z\bigr\rfloor$\,.
\end{description}
\end{description}
\end{definition}
\begin{definition}[provability]
A proposition $\lfloor\Gamma\rfloor\in\thesetofDBLProp$ is proved in DBL, denoted $\vdash \lfloor\Gamma\rfloor$, if it is deduced by a sequence of axioms and rules of DBL.
\end{definition}
\begin{definition}[propositional provability]
A proposition $X\in\thesetofProp$ is proved in propositional logic, denoted $\vdash_C X$, if $\lfloor X\rfloor$ is deduced from classical axioms and modus ponens only.
\end{definition}
The (crucial) equivalence between semantics and proofs are established by \emph{completeness theorems}.
The following partial results are almost immediate.
\begin{property}\label{completude:th:direct:1}
If $\vdash \lfloor\Gamma\rfloor$\,, then $\vDash \lfloor\Gamma\rfloor$\,.
\end{property}
\begin{proof}
The result is immediate, since deduction~\ref{hdr:prop:sys:axiom} implies that the axioms of DBL are valid in DBL (the classical and meta- axioms and rules are obviously valid in DBL). 
\end{proof}
\begin{property}\label{completude:th:class:convers:1}
If $X\in\thesetofProp$ and $\vDash \lfloor X\rfloor$\,, then $\vdash_C X$\,.
\end{property}
\begin{proof}
It is known that the class of Boolean algebras constitutes a complete semantic for the propositional logic~\cite{cori2002}.
Since main theorem establishes that any Boolean algebra can be extended into a Bayesian algebra, it follows that the validity of $X\in\thesetofProp$ in DBL implies that $X$ is proved in propositional logic.
\end{proof}
The converse of property~\ref{completude:th:direct:1}, which will complete the completeness theorem, needs a bit more work.
This result is established in the following section.
\subsection{Completeness theorem of DBL}\label{section:theoremsDBL}
In order to proof the completeness, some proof deductions are needed first.
%
The proofs of these deductions are done in appendix~\ref{appendix:log:proof}, except for the first which is given as an example.
\begin{deduction}\label{prop:full:univ}
If $\vdash \lfloor\Gamma|X\rfloor$ then $\vdash \lfloor\Gamma|[X]Y\leftrightarrow Y\rfloor$\,.
In particular $\vdash \lfloor\Gamma|[\top]Y\leftrightarrow Y\rfloor$\,.
\end{deduction}
\begin{proof}
From {\theAxCondIIinf}, it is deduced 
$\vdash \lfloor[X]Y\rightarrow(X\rightarrow Y)\rfloor$
 and 
$\vdash \lfloor[X]\neg Y\rightarrow(X\rightarrow \neg Y)\rfloor$\,.
As a consequence, 
$\vdash\lfloor X\rightarrow([X]Y\rightarrow Y)\rfloor$
 and 
$\vdash \lfloor X\rightarrow([X]\neg Y\rightarrow \neg Y)\rfloor$\,.
By applying {\theAxNeg}, it comes $\vdash\lfloor X\rightarrow([X]Y\leftrightarrow Y)\rfloor$\,.
Then $\vdash \lfloor\Gamma|[X]Y\leftrightarrow Y\rfloor$ follows from {\bf MP}.
\end{proof}
\begin{deduction}\label{prop:empty:univ}
If $\vdash \lfloor\Gamma|\neg X\rfloor$ then $\vdash \lfloor\Gamma|[X]Y\leftrightarrow Y\rfloor$\,.
In particular $\vdash \lfloor\Gamma|[\bot]Y\leftrightarrow Y\rfloor$\,.
\end{deduction}
\begin{deduction}\label{prop:cont:taut}
If $\vdash \lfloor\Gamma|Y\rfloor$ then $\vdash \lfloor\Gamma|[X]Y\rfloor$\,.
In particular $\vdash \lfloor\Gamma|[X]\top\rfloor$\,.
If $\vdash \lfloor\Gamma|\neg Y\rfloor$ then $\vdash \lfloor\Gamma|\neg[X]Y\rfloor$\,.
In particular $\vdash \lfloor\Gamma|\neg[X]\bot\rfloor$\,.
\end{deduction}
\begin{deduction}\label{prop:subuniv:classic}
$\vdash \lfloor[X](Y\rightarrow Z)\leftrightarrow\bigl([X]Y\rightarrow [X]Z\bigr)\rfloor$ and $\vdash \lfloor[X]\neg Y\leftrightarrow \neg[X]Y\rfloor$\,.
\end{deduction}
\begin{deduction}\label{cor:subuniv:classic}
$\vdash \lfloor[X](Y\wedge Z)\leftrightarrow\bigl([X]Y\wedge [X]Z\bigr)\rfloor$\,, $\vdash \lfloor[X](Y\vee Z)\leftrightarrow\bigl([X]Y\vee [X]Z\bigr)\rfloor$  and $\vdash \lfloor[X](Y\leftrightarrow Z)\leftrightarrow\bigl([X]Y\leftrightarrow [X]Z\bigr)\rfloor$ \,.
\end{deduction}
\begin{deduction}\label{cor:right:equiv}
If $\vdash \lfloor\Gamma|Y\leftrightarrow Z\rfloor$
 then $\vdash \lfloor\Gamma|[X]Y\leftrightarrow [X]Z\rfloor$\,.
\end{deduction}
\begin{deduction}\label{prop:cond:inf}
$\vdash \bigl\lfloor\bigl(X\wedge[X]Y\bigr)\leftrightarrow (X\wedge Y)\bigr\rfloor$\,. 
\end{deduction}
\begin{deduction}\label{prop:introspection}
$\vdash \bigl\lfloor\neg X\big|[X]X\bigr\rfloor$
\end{deduction}
\begin{deduction}\label{prop:indep/hyp}
$\vdash \bigl\lfloor[X][X]Y\leftrightarrow [X]Y\bigr\rfloor$\,. 
\end{deduction}
\begin{deduction}\label{prop:equivalence}
If $\vdash \lfloor\Gamma|W\leftrightarrow X\rfloor$
and 
$\vdash \lfloor\Gamma|Y\leftrightarrow Z\rfloor$
 then $\vdash \bigl\lfloor\Gamma\big|[W]Y\leftrightarrow [X]Z\bigr\rfloor$\,.
\end{deduction}
At this step, it is possible to introduce the notion of logical equivalence.
\begin{property}[logical equivalence]\label{the:logical:equiv:prop:1}
The relation $\equiv$, defined on ${\thesetofCondProp}$ by $X\equiv Y\stackrel{\Delta}{\iff}\vdash X\leftrightarrow Y$, is an equivalence relation, called logical equivalence.
This relation is compatible with the logical operators:
\[
X\equiv Y\mbox{ and }U\equiv V\mbox{ implies }X\rightarrow U\equiv Y\rightarrow V\mbox{ and }[X]U\equiv [Y]V\;.
\]
\end{property}
\begin{proof}
Except for the conditioning $[\;]$, this result is a well known consequence of classical axioms and modus ponens.
Then, the proof is completed by means of deductions~\ref{cor:right:equiv} and~\ref{prop:equivalence}.
\end{proof}
\begin{property}[logical equivalence]\label{the:logical:equiv:class:prop:1}
The set of logical equivalence classes of the conditional propositions, ${\thesetofCondProp}/{\equiv}$, is a Bayesian algebra.
\end{property}
\begin{proof}
An immediate consequence of the axioms and rules of DBL.
\end{proof}
\begin{theorem}[completeness theorem]\label{completude:th:1}
$\vdash \lfloor\Gamma\rfloor$ if and only if $\vDash \lfloor\Gamma\rfloor$\,.
\end{theorem}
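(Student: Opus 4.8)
The plan is to combine soundness, already established as Property~\ref{completude:th:direct:1}, with a Lindenbaum--Tarski (term-model) argument for the converse; so what remains is to prove that $\vDash\lfloor\Gamma\rfloor$ implies $\vdash\lfloor\Gamma\rfloor$. Write $\Gamma = X_1|\cdots|X_n$ with each $X_i\in\thesetofCondProp$.

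First I would invoke Property~\ref{the:logical:equiv:class:prop:1}: the set $L = \thesetofCondProp/{\equiv}$ of logical-equivalence classes is a Bayesian algebra, with Boolean operations induced by $\neg,\wedge,\vee$ and conditional operator induced by $[\;]$, so that $\everFalse_L = [\bot]_{\equiv}$, $\everTrue_L = [\top]_{\equiv}$, $\sim[X]_{\equiv} = [\neg X]_{\equiv}$, $[X]_{\equiv}\cap[Y]_{\equiv} = [X\wedge Y]_{\equiv}$ and $[\,[X]_{\equiv}\,]\,[Y]_{\equiv} = [[X]Y]_{\equiv}$ (these identities being exactly the content of Properties~\ref{the:logical:equiv:prop:1} and~\ref{the:logical:equiv:class:prop:1}, which in turn rest on deductions~\ref{prop:subuniv:classic}--\ref{prop:equivalence}). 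Let $H:\thesetofCondProp\to L$ be the canonical surjection $X\mapsto[X]_{\equiv}$. I would then verify that $H$ is a valuation of $\thesetofCondProp$ in $L$ in the sense of the definition of conditional valuation: $H(\bot) = [\bot]_{\equiv} = \everFalse_L$; next, $(L\setminus H(X))\cup H(Y) = \sim[X]_{\equiv}\cup[Y]_{\equiv} = [\neg X\vee Y]_{\equiv} = [X\rightarrow Y]_{\equiv} = H(X\rightarrow Y)$; and $H([X]Y) = [[X]Y]_{\equiv} = [H(X)]\,H(Y)$ by the very definition of the conditional on $L$. Hence $H$ is a valuation in the Bayesian algebra $L$.

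Now suppose $\vDash\lfloor X_1|\cdots|X_n\rfloor$. Since validity quantifies over all Bayesian algebras and all valuations, in particular $H\vDash\lfloor X_1|\cdots|X_n\rfloor$, so there is an index $i$ with $H(X_i) = \everTrue_L$, that is $[X_i]_{\equiv} = [\top]_{\equiv}$, i.e.\ $\vdash X_i\leftrightarrow\top$. As $\vdash\lfloor\top\rfloor$ holds by classical logic, modus ponens (\textbf{MP}) yields $\vdash\lfloor X_i\rfloor$, and then meta-weakening (\metaW), followed by meta-permutation (\metaP) to put the block back in order, gives $\vdash\lfloor X_1|\cdots|X_n\rfloor$, as required.

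The genuinely new work here is the verification that $\thesetofCondProp/{\equiv}$ is a Bayesian algebra and that the quotient map respects the valuation clauses, but this has already been carried out in Properties~\ref{the:logical:equiv:prop:1} and~\ref{the:logical:equiv:class:prop:1}; the remaining steps are routine bookkeeping. The only point needing a touch of care --- and the closest thing to an obstacle --- is the translation ``$H(X_i)$ is the top element of $L$'' $\Longleftrightarrow$ ``$\vdash\lfloor X_i\rfloor$'', which relies on $\everTrue_L = [\top]_{\equiv}$ together with the classical interderivability of $\vdash X_i\leftrightarrow\top$ and $\vdash\lfloor X_i\rfloor$ via \textbf{MP} and the tautology $\top$.
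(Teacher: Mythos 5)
Your proposal is correct and follows exactly the route the paper takes: soundness via Property~\ref{completude:th:direct:1}, then the Lindenbaum--Tarski argument through the quotient Bayesian algebra of Property~\ref{the:logical:equiv:class:prop:1}. The paper merely cites that property without elaboration, whereas you spell out the verification that the canonical surjection is a conditional valuation and the passage from $H(X_i)=\everTrue_L$ to $\vdash\lfloor X_i\rfloor$; these are the details the paper leaves implicit.
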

\begin{proof}
Owing to property~\ref{completude:th:direct:1}, it is sufficent to prove that $\vDash \lfloor\Gamma\rfloor$ implies $\vdash \lfloor\Gamma\rfloor$\,.
This inference is a consequence of property~\ref{the:logical:equiv:class:prop:1}.
\end{proof}
The completeness theorem establishes the link between the class of Bayesian algebra and the system of deduction of DBL.
Our short introduction to DBL is almost done.
Next section investigates briefly some properties of DBL in regards to the notion of \emph{logical independence}.
\subsection{Logical independence}\label{subsection:discussion}
In definition~\ref{hdr:indep:semantique:1}, a notion of independence has been defined within DBL by means of the conditional inference:
\[Y\mbox{ is independent of }X \stackrel{\Delta}{\iff}\vDash\lfloor[X]Y\leftrightarrow Y\rfloor \iff\vdash\lfloor[X]Y\leftrightarrow Y\rfloor\,.\]
This independence relation is of course inspired by the probabilistic independence, $P(Y|X)=P(Y)$\,.
It is noticed however that the independence relation of DBL is not necessary symmetric, unlike the probabilistic independence.
\\[3pt]
In mathematical logic, the notion of independence refers to the impossibility to infer or refute a proposition from a set of propositions.
The relation~(\ref{indep:sem:log:aq:1:2}), which is a consequence of rule~{\theAxInfIIcond} is a good illustration of the link between the independence relation of DBL and the logical independence:
$$
\vdash \bigl\lfloor[X]Y\leftrightarrow Y\bigr\rfloor
\mbox{ \ and \ }
\vdash \lfloor X\rightarrow Y\rfloor
\mbox{ \ imply \ }
\vdash \lfloor\neg X|Y\rfloor\;.
$$
This relation is interpreted as follows: \emph{if $Y$ is independent of $X$ and $X$ infers $Y$\,, then the inference is trivial -- \emph{i.e} $X$ is a contradiction or $Y$ is a tautology}.
\\[3pt]
On the other hand, the relation~(\ref{indep:sem:log:aq:1:2}) implies immediately the following:
\begin{equation}\label{equn:of:contradiction:1}
\vdash \bigl\lfloor[X]Y\leftrightarrow Y\bigr\rfloor
\mbox{ \ and \ }
\vdash \lfloor \neg(X\wedge Y)\rfloor
\mbox{ \ then \ }
\vdash \lfloor \neg X|\neg Y\rfloor\;,
\end{equation}
which is interpreted as: \emph{if $Y$ is independent of $X$ and $X$ contradicts $Y$\,, then the contradiction is trivial -- \emph{i.e} $X$ is a contradiction or $Y$ is a contradiction}.
Given these findings, it follows that the independence relation, as defined in definition~{\theAxInfIIcond}, may be considered as a logical independence.
An interesting point is that DBL makes possible the manipulation of the concept of logical independence as a relation within the logic itself.
Owing to the extensions theorems, DBL is also a link between the notions of probabilistic conditionals/independence and logical conditionals/independence.
\section{Conclusion}
\label{sect:conclusion}
In the first part of this paper, a new algebraic structure has been introduced, extending the Boolean algebra with an operator for the algebraic representation of the Bayesian inference.
It has been shown that it is possible to construct such extension for any Boolean algebra.
This construction is such that any probability defined on a Boolean algebra may be extended to this extension in compliance with the definition of the conditional probability.
As a corollary, this result complements the triviality of Lewis, by providing a positive answer to the definition of an algebraic conditional operator by means of an extension of the space of event.
\\[3pt]
In a second part of this paper, this algebraic extension has been applied to a model-based definition of a bivalent Bayesian extension of propositional logic.
It has been shown that this logic implements intrinsically a relation of \emph{logical independence}.
Various elementary properties have been derived.
\\[3pt]
This work addressed the delicate issue of Lewis' triviality by complementing it positively.
It introduced also some new questions.
One of them is the extension of such a result to measurable spaces.
The author surmises that such an extension is possible, at least by considering some additional restrictions on the spaces.
Another point is the study of the implied logic -- in particular, the difficult question of a possible calculus system -- and its connexion with other logical systems. 
%

%
%
\appendix
\small
\section{Proof of main lemma}
\label{appendix:proof:mainlemma:1}
The main lemma~\ref{mainlemma:1} is equivalently rewritten according to Stone representation theorem~\cite{marchal:Stone}:
\begin{lemma}\label{mainleamma:rewritten:1}
Given an ordered field $\RsetAlt$, a set $\everTrue$ and a Boolean sub-algebra $E\subset 2^\everTrue$,
there is a non-trivial ordered abelian group $\thegroupAlone$ and a probability $P\in\thesetofProb{\RsetAlt}{\theHahnSeriesField{\RsetAlt}{\thegroupAlone}}$ such that $P>0$\,.
\end{lemma}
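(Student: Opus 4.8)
The plan is to exhibit $P$ explicitly as a normalised ``counting series'' over the points of $\everTrue$, valued in a Hahn series field whose group is free abelian with one generator per point of $\everTrue$, equipped with an order chosen precisely so that the relevant supports are well-ordered. The guiding idea is that, over a large enough suitably ordered group, a Hahn series field has room to give each point $\omega\in\everTrue$ its own positive infinitesimal weight $X^{\delta_\omega}$, and that sums of such monomials over an arbitrary subset of $\everTrue$ still define genuine Hahn series.

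First I would fix, using the axiom of choice, a well-ordering $\everTrue=\{\omega_\xi\}_{\xi<\tau}$ with $\tau=\thecardof{\everTrue}$, and set $\thegroupAlone=\bigoplus_{\xi<\tau}\Zset$, the group of finitely supported functions $\tau\to\Zset$, ordered by declaring a non-zero $g$ to be positive iff $g(\xi_0)>0$, where $\xi_0=\max\theHahnSupport{g}$ is the largest index in the finite support of $g$. A brief verification shows this is a total order making $\thegroupAlone$ a non-trivial ordered abelian group: totality follows from finiteness of supports, translation invariance is immediate, and for $g,h>0$ one reads off the coordinate of $g+h$ at $\max\bigl(\theHahnSupport{g}\cup\theHahnSupport{h}\bigr)$ to conclude $g+h>0$. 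Writing $\delta_\xi$ for the indicator of $\{\xi\}$, the map $\xi\mapsto\delta_\xi$ is strictly increasing, since for $\xi<\zeta$ the element $\delta_\zeta-\delta_\xi$ has top non-zero coordinate $+1$ at index $\zeta$.

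Next I would define $\mu:E\to\theHahnSeriesField{\RsetAlt}{\thegroupAlone}$ by
\[
\mu(U)\;=\;\sum_{\xi\,:\,\omega_\xi\in U} X^{\delta_\xi}\;,
\]
i.e. the series with support $\{\delta_\xi : \omega_\xi\in U\}$ and all coefficients $1$. This is a bona fide Hahn series: $\xi\mapsto\delta_\xi$ maps the well-ordered set $\{\xi<\tau : \omega_\xi\in U\}$ order-isomorphically onto that support, which is therefore well-ordered in $\thegroupAlone$. Comparing coefficients at each $\delta_\xi$ and using the indicator identity $[\omega_\xi\in x\cap y]+[\omega_\xi\in x\cup y]=[\omega_\xi\in x]+[\omega_\xi\in y]$ gives finite additivity $\mu(x\cap y)+\mu(x\cup y)=\mu(x)+\mu(y)$; moreover $\mu(\everFalse)=0$, and for $U\neq\everFalse$ the series $\mu(U)$ has coefficient $1>0$ at its smallest exponent, so $\mu(U)>0$ in the lexicographic ordering of $\theHahnSeriesField{\RsetAlt}{\thegroupAlone}$. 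It remains only to normalise: $\mu(\everTrue)=\sum_{\xi<\tau}X^{\delta_\xi}$ has leading coefficient $1$, hence is invertible in the field $\theHahnSeriesField{\RsetAlt}{\thegroupAlone}$, and $\mu(\everTrue)^{-1}>0$ since the inverse of a positive element of an ordered field is positive. Setting $P=\mu(\everTrue)^{-1}\mu$ then gives $P(\everFalse)=0$, $P(\everTrue)=1$, $P\ge0$ with $P(U)>0$ for $U\neq\everFalse$ (a product of two positives), while finite additivity is preserved under scaling; thus $P\in\thesetofProb{\theHahnSeriesField{\RsetAlt}{\thegroupAlone}}{E}$ and $P>0$, as required. (If $\everTrue=\everFalse$ the algebra is degenerate and admits no probability distribution at all, so the statement is read for non-degenerate $E$, where $\tau\ge1$ and $\thegroupAlone\neq\{0\}$.)

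The only step that is not purely mechanical is the interplay between the chosen order on $\thegroupAlone$ and the well-orderedness of the supports $\{\delta_\xi : \omega_\xi\in U\}$ — this is exactly what dictates the ``top-of-support'' ordering and the use of one free generator per point of $\everTrue$. Everything else (the ordered-group axioms, finite additivity of $\mu$, invertibility of $\mu(\everTrue)$, strict positivity of $P$) reduces to the short computations indicated above; I expect this well-ordering point to be the principal, though still light, obstacle.
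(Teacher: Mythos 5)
Your proposal is correct and follows essentially the same route as the paper: a free abelian group generated by the points of $\everTrue$, an order-embedding of a well-ordering of $\everTrue$ into the positive cone, and the normalised counting series $P(y)=\bigl(\sum_{\omega\in\everTrue}X^{\sigma(\omega)}\bigr)^{-1}\sum_{\omega\in y}X^{\sigma(\omega)}$. If anything you are more careful than the paper, which only asks for an injective $\sigma:\everTrue\rightarrow\thegroupAlone_+$ and does not address the well-orderedness of the supports $\{\sigma(\omega)\,/\,\omega\in y\}$ that your top-of-support ordering is designed to guarantee.
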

\begin{proof}
First, it is noticed that there is a non-trivial ordered abelian group $\thegroupAlone$ such that $\thecardof{\thegroupAlone}\ge \thecardof{\everTrue}$\,.
For example, this result may be proved by:
\begin{itemize}
\item Defining a well-ordering on $\everTrue$ (typically inherited from the order on ordinals), 
\item Considering the free abelian group $\Zset^\everTrue$ together with the implied lexicographic order. 
\end{itemize}
Then, let $\sigma: \everTrue \rightarrow \thegroupAlone_+$ be an injective mapping from $\everTrue$ to the set, $\thegroupAlone_+$, of non negative elements of $G$.
Then $P$ is defined by
$
P(y)=\left(\sum_{\omega\in \everTrue}X^{\sigma(\omega)}\right)^{-1}\sum_{\omega\in y}X^{\sigma(\omega)}
\mbox{ \ for all \ }
y\in E\,.
$
\end{proof}
\section{Proofs of deductions}\label{appendix:log:proof}
For the sake of simplicity, the rules {\metaP}, {\metaC} and {\metaW} are implicitly used in the subsequent proofs.
\begin{proof}[Proof of deduction~\ref{prop:empty:univ}]
Assume $\vdash \lfloor\Gamma|\neg X\rfloor$\,.
From deduction~\ref{prop:full:univ}, it is deduced 
$\vdash \lfloor\Gamma|[\neg X]Y\leftrightarrow Y\rfloor$\,.
From classical theorem $\vdash \lfloor X\leftrightarrow \neg\neg X\rfloor$
and {\theAxInd}, it is then deduced $\vdash \lfloor\Gamma|[X]Y\leftrightarrow Y\rfloor$\,.
\end{proof}
\begin{proof}[Proof of deduction~\ref{prop:cont:taut}]
Assume $\vdash \lfloor\Gamma|Y\rfloor$\,. 
Then $\vdash \lfloor\Gamma|[X]Y\rfloor$\,, it is deduced $\vdash \lfloor\Gamma|Y\rfloor$ then $\vdash \lfloor\Gamma|X\rightarrow Y\rfloor$\,.
By applying {\theAxInfIIcond}\,, it comes $\vdash \lfloor\Gamma|\neg X|[X]Y\rfloor$\,.
From deduction~\ref{prop:empty:univ}\,, it is deduced $\vdash \lfloor\Gamma|[X]Y\leftrightarrow Y|[X]Y\rfloor$\,.
By applying MP with $\vdash \lfloor\Gamma|Y\rfloor$\,, it is deduced
$\vdash \lfloor\Gamma|[X]Y|[X]Y\rfloor$ and thus $\vdash \lfloor\Gamma|[X]Y\rfloor$\,.
\\[3pt]
Now assume $\vdash \lfloor\Gamma|\neg Y\rfloor$\,.
It is similarly deduced $\vdash \lfloor\Gamma|[X]\neg Y\rfloor$\,, and then $\vdash \lfloor\Gamma|\neg [X]Y\rfloor$ by {\theAxNeg}+MP.
\end{proof}
\begin{proof}[Proof of deduction~\ref{prop:subuniv:classic}]
This deduction is almost obtained from~{\theAxK} and~{\theAxNeg}.
However, it is necessary to prove $\vdash \lfloor\bigl([X]Y\rightarrow [X]Z\bigr)\rightarrow [X](Y\rightarrow Z)\rfloor$\,.
From $\vdash \lfloor\neg Y\rightarrow (Y\rightarrow Z)\rfloor$ and $\vdash \lfloor Z\rightarrow (Y\rightarrow Z)\rfloor$\,, it is deduced $\vdash \lfloor[X]\bigl(\neg Y\rightarrow (Y\rightarrow Z)\bigr)\rfloor$ and $\vdash \lfloor [X]\bigl(Z\rightarrow (Y\rightarrow Z)\bigr)\rfloor$\,, by applying deduction~\ref{prop:cont:taut}.
By applying {\theAxK}+MP, it comes $\vdash \lfloor[X]\neg Y\rightarrow [X](Y\rightarrow Z)\rfloor$ and $\vdash \lfloor [X]Z\rightarrow [X](Y\rightarrow Z)\bigr)\rfloor$\,.
By applying {\theAxNeg}+MP, it is also deduced $\vdash \lfloor\neg [X]Y\rightarrow [X](Y\rightarrow Z)\rfloor$\,.
As a consequence, $\vdash \lfloor\bigl([X]Y\rightarrow [X]Z\bigr)\rightarrow [X](Y\rightarrow Z)\rfloor$\,.
\end{proof}
\begin{proof}[Proof of deduction~\ref{cor:subuniv:classic}]
Immediate corollary of deduction~\ref{prop:subuniv:classic}.
\end{proof}
\begin{proof}[Proof of deduction~\ref{cor:right:equiv}]
From $\vdash \lfloor\Gamma|Y\leftrightarrow Z\rfloor$\,, it is deduced $\vdash \lfloor\Gamma|[X](Y\leftrightarrow Z)\rfloor$ by deduction~\ref{prop:cont:taut}.
Then it is deduced $\vdash \lfloor\Gamma|[X]Y\leftrightarrow [X]Z\rfloor$ from deduction~\ref{cor:subuniv:classic}.
\end{proof}
\begin{proof}[Proof of deduction~\ref{prop:cond:inf}]
From {\theAxCondIIinf}, it is deduced $\vdash \lfloor[X]\neg Y\rightarrow(X\rightarrow \neg Y)\rfloor$\,.
Then it is deduced $\vdash \lfloor\neg(X\rightarrow \neg Y)\rightarrow\neg[X]\neg Y\rfloor$\,.
Then $\vdash \lfloor(X\wedge Y)\rightarrow[X]Y\rfloor$ by applying {\theAxNeg}\,, and finally $\vdash \bigl\lfloor(X\wedge Y)\rightarrow\bigl(X\wedge [X]Y\bigr)\bigr\rfloor$ by applying {\theAxNeg}\,.
Conversely, $\vdash \lfloor[X]Y\rightarrow (X\rightarrow Y)\rfloor$ is deduced from  {\theAxCondIIinf},
and then $\vdash\bigl\lfloor \bigl(X\wedge[X]Y\bigr)\rightarrow (X\wedge Y)\bigr\rfloor$\,.
\end{proof}
\begin{proof}[Proof of deduction~\ref{prop:introspection}]
Immediate consequence of {\theAxInfIIcond}.
\end{proof}

\begin{proof}[Proof of deduction~\ref{prop:indep/hyp}]
By deductions~\ref{prop:cond:inf} and~\ref{cor:right:equiv}, it is deduced
$\vdash \bigl\lfloor[X]\bigl(X\wedge[X]Y\bigr)\leftrightarrow [X](X\wedge Y)\bigr\rfloor$\,. 
By applying deduction~\ref{cor:subuniv:classic}, it comes 
$\vdash \bigl\lfloor\bigl([X]X\wedge[X][X]Y\bigr)\leftrightarrow \bigl([X]X\wedge [X]Y\bigr)\bigr\rfloor$\,.
As a consequence, $\vdash \bigl\lfloor[X]X\rightarrow\bigl([X][X]Y\leftrightarrow [X]Y\bigr)\bigr\rfloor$\,.
Then, by applying deduction~\ref{prop:introspection} and MP, it comes
$\vdash \bigl\lfloor\neg X\bigl|[X][X]Y\leftrightarrow [X]Y\bigr\rfloor$\,. 
Now by applying deduction~\ref{prop:empty:univ}, it is deduced $\vdash \bigl\lfloor[X][X]Y\leftrightarrow [X]Y\bigl|[X][X]Y\leftrightarrow [X]Y\bigr\rfloor$ and the result follows from {\metaW}. 
\end{proof}
\begin{proof}[Proof of deduction~\ref{prop:equivalence}]
It is sufficient to prove $\vdash \lfloor\Gamma|W\leftrightarrow X\rfloor\Rightarrow\; \vdash \bigl\lfloor\Gamma\big|[W]Y\leftrightarrow [X]Y\bigr\rfloor$\,.\\
Assume $\vdash \lfloor\Gamma|W\leftrightarrow X\rfloor$\,.
Then $\vdash \lfloor\Gamma|\neg W\leftrightarrow \neg X\rfloor$\,.
Now, $\vdash \bigl\lfloor[X][X]Y\leftrightarrow [X]Y\bigr\rfloor$ by deduction~\ref{prop:indep/hyp}.
Applying {\theAxInd}, it is deduced $\vdash \bigl\lfloor\Gamma|[\neg W][X]Y\leftrightarrow [X]Y\bigr\rfloor$\,.
Now  $\vdash \lfloor\Gamma|W\leftrightarrow \neg\neg W\rfloor$\,.
Applying {\theAxInd} again, it is deduced $\vdash \bigl\lfloor\Gamma|[W][X]Y\leftrightarrow [X]Y\bigr\rfloor$\,.
\\[3pt]
Now, deduction~\ref{prop:introspection} implies $\vdash \bigl\lfloor\neg W\big|[W]W\bigr\rfloor$\,.
It is thus deduced $\vdash \bigl\lfloor\Gamma\big|\neg W\big|\bigl([W]W\wedge [W][X]Y\bigr)\leftrightarrow [X]Y\bigr\rfloor$\,.
Since $\vdash \bigl\lfloor \bigl(W\wedge [X]Y\bigr)\leftrightarrow (W\wedge Y)\bigr\rfloor$\,, it is deduced $\vdash \bigl\lfloor\Gamma\big|\neg W\big|[W](W\wedge Y)\leftrightarrow [X]Y\bigr\rfloor$
and finally $\vdash \bigl\lfloor\Gamma\big|\neg W\big|[W]Y\leftrightarrow [X]Y\bigr\rfloor$\,.
The result is concluded by applying deduction~\ref{prop:empty:univ}.
\end{proof}
\end{document}